\newcommand{\Spec}{\operatorname{Spec}}
\newcommand{\Spf}{\operatorname{Spf}}
\newcommand{\et}{\operatorname{\acute{e}t}}
\newcommand{\cris}{\operatorname{cris}}
\newcommand{\DR}{\operatorname{dR}}
\newcommand{\doubp}{\operatorname{dp}}
\newcommand{\bbar}{\overline{b}}
\newcommand{\sbar}{\overline{s}}
\newcommand{\Pic}{\mathrm{Pic}}
\newcommand{\Cl}{\mathrm{Cl}}
\newcommand{\QQ}{\mathbb{Q}}
\newcommand{\CC}{\mathbb{C}}
\newcommand{\FF}{\mathbb{F}}
\newcommand{\RR}{\mathbb{R}}
\newcommand{\LL}{\mathbb{L}}
\newcommand{\PP}{\mathbb{P}}
\newcommand{\DD}{\mathbb{D}}
\newcommand{\ZZ}{\mathbb{Z}}
\newcommand{\KK}{\mathbb{K}}
\newcommand{\GG}{\mathbb{G}}
\newcommand{\calO}{\mathcal{O}}
\newcommand{\calA}{\mathcal{A}}
\newcommand{\calB}{\mathcal{B}}
\newcommand{\calC}{\mathcal{C}}
\newcommand{\calE}{\mathcal{E}}
\newcommand{\calF}{\mathcal{F}}
\newcommand{\calL}{\mathcal{L}}
\newcommand{\calV}{\mathcal{V}}
\newcommand{\calW}{\mathcal{W}}
\newcommand{\calX}{\mathcal{X}}
\newcommand{\calY}{\mathcal{Y}}
\newcommand{\End}{\mathrm{End}}
\newcommand{\Hom}{\mathrm{Hom}}
\newcommand{\Rbar}{\overline{R}}
\newcommand{\DDD}{\mathsf{D}}
\newcommand{\M}{\mathsf{M}}
\newcommand{\F}{\mathsf{F}}
\newcommand{\A}{\mathsf{A}}
\newcommand{\Del}{\nabla}
\newcommand{\Pet}{P^{2}_{\et}f_{*}}
\newcommand{\Ret}{R_{\et}}
\newcommand{\calAhat}{\widehat{\calA}}
\newcommand{\Abar}{\overline{\widehat{A}}}
\newcommand{\ZZhat}{\widehat{\ZZ}}
\newcommand{\Ahat}{\widehat{A}}
\newcommand{\Shat}{\widehat{S}}
\newcommand{\Bhat}{\widehat{B}}
\newcommand{\Xhat}{\widehat{X}}
\newcommand{\kk}{\mathsf{k}}
\newcommand{\pr}{\mathsf{pr}}
\newcommand{\nodivide}{\nmid}
\newcommand{\NE}{\mathrm{NE}}
\newcommand{\Fil}{\mathrm{Fil}}
\newcommand{\FL}{\mathrm{FL}}
\newcommand{\uT}{\underline{\rm T}}
\newcommand{\bw}[1]{{\textstyle \bigwedge}^{#1}}
\newcommand{\st}{\mathrm{st}}
\DeclareMathOperator{\Sym}{Sym}
\DeclareMathOperator{\rk}{rk}
\DeclareMathOperator{\charr}{char}
\DeclareMathOperator{\disc}{disc}
\let\mf\mathfrak
\let\mc\mathcal
\let\wh\widehat
\newcommand{\simrightarrow}{\overset{\sim}{\rightarrow}}
\theoremstyle{plain}
\newtheorem{lem}{Lemma}[section]
\newtheorem{thm}[lem]{Theorem}
\newtheorem*{conj}{Conjecture}
\newtheorem*{defin}{Definition}
\newtheorem*{mainthm}{Main Theorem}
\newtheorem{prop}[lem]{Proposition}
\newtheorem{cor}[lem]{Corollary}
\newtheorem*{corollary}{Corollary}
\theoremstyle{definition}
\newtheorem{defn}[lem]{Definition}
\newtheorem{example}[lem]{Example}
\newtheorem{remark}[lem]{Remark}
\title[Supersingul\"{a}re K3-fl\"{a}chen f\"{u}r grosse primzahlen]{Supersingular K3 surfaces for large primes}
\author[D. Maulik]{Davesh Maulik with an appendix by Andrew Snowden}
\email{dmaulik@math.columbia.edu}
\email{asnowden@math.mit.edu}
\begin{document}

\begin{abstract}
Given a K3 surface $X$ over a field of characteristic $p$, Artin conjectured that if $X$ is supersingular (meaning infinite height) then its Picard rank is 22.  Along with work of Nygaard-Ogus, this conjecture implies the Tate conjecture for K3 surfaces over finite fields with $p \geq 5$.
We prove Artin's conjecture under the additional assumption that $X$ has a polarization of degree $2d$ with $p > 2d+4$. 
Assuming semistable reduction for surfaces in characteristic $p$, we can improve the main result to K3 surfaces which admit a polarization of degree prime-to-$p$ when $p \geq 5$.  

The argument uses Borcherds' construction of automorphic forms on $O(2,n)$ to construct ample divisors on the moduli space.  We also establish finite-characteristic versions of the positivity of the Hodge bundle and the Kulikov-Pinkham-Persson classification of K3 degenerations.  In the appendix by A. Snowden, a compatibility statement is proven between Clifford constructions and integral $p$-adic comparison functors.
\end{abstract}
\date{March 22, 2012}
\maketitle
\tableofcontents
\section{Introduction}

Let $k$ be an algebraically closed field.
Recall that a K3 surface $X$ over $k$ is a smooth projective surface such that the canonical bundle $K_X$ is trivial and $H^1(X,\calO_X) = 0$.
It follows from the injectivity of the Chern class map
$$c_1^{\et}: \Pic(X) \rightarrow H^{2}_{\et}(X,\ZZ_l(1))$$
in \'etale cohomology for $l \ne \mathrm{char} k$
that 
$$\rk \Pic(X) \leq 22 = \rk H^{2}_{\et}(X,\ZZ_l(1)).$$

When $k = \CC$, Hodge theory implies that this bound can be improved to
$$\rk \Pic(X) \leq 20 = h^{1,1}(X).$$  However, when $\charr k = p > 0$, this stronger bound no longer holds.  For example, when $p \equiv 3\bmod 4$, Tate \cite{tate} showed that the Fermat
quartic
$$\{x^4+y^4 + z^4+w^4 = 0 \} \subset \PP_{k}^{3}$$
has Picard rank $22$.

In his 1974 paper \cite{artin-ss}, Artin conjectured the following cohomological criterion for when $X$ has Picard rank 22.
We first recall from \cite{artin-mazur} that the formal Brauer group $\widehat{\mathrm{Br}}(X)$ of $X$ is a one-dimensional formal group scheme representing the functor:
$$T \mapsto \left[\mathrm{Ker}(\mathrm{Br}(X \times T) \rightarrow \mathrm{Br}(X))\right],$$
on finitely generated local Artin $k$-algebras,
where $$\mathrm{Br}(X\times T) = H^2_{\et}(X\times T, \GG_m)$$ is the Brauer group of $X\times T$.

\begin{defin}
A K3 surface $X$ over $k$ is supersingular if the formal Brauer group has infinite height, i.e. $$\widehat{\mathrm{Br}}(X) = \widehat{\GG_a}.$$
\end{defin}
An equivalent formulation (due to \cite{artin-mazur}) is that a K3 surface $X$ is supersingular if the slopes of the Frobenius action on the crystalline cohomology group $$H^{2}_{\cris}(X,W(k))$$ are identically equal to $1$.

Artin's conjecture is that this condition is equivalent to having rank $22$:
\begin{conj}
A K3 surface over $k$ has Picard rank $22$ if and only if it is supersingular.
\end{conj}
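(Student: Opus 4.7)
The forward direction (Picard rank $22$ implies supersingular) is the easy half: if $\rk \Pic(X) = 22$, then the image of the crystalline Chern class map already spans $H^2_{\cris}(X, W(k))_{\QQ}$, so every slope of Frobenius equals $1$, and by Artin-Mazur the formal Brauer group has infinite height. The real content is the converse.

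For the converse, the plan is to assume $X$ is supersingular with $\rk \Pic(X) \leq 20$ and derive a contradiction by working on the moduli space $M = M_{2d,\FF_p}$ of polarized K3 surfaces of degree $2d$ over $\FF_p$, under the hypothesis $p > 2d+4$. Three ingredients, all advertised in the abstract, must be assembled: (i) a characteristic-$p$ Kulikov--Pinkham--Persson classification, producing semistable models and toroidal compactifications $\overline{M}$ of one-parameter K3 degenerations; (ii) positivity of the Hodge bundle $\lambda$ on $\overline{M}$, so that $\deg(\lambda|_C) > 0$ on every complete curve $C \subset \overline{M}$; and (iii) mod-$p$ Borcherds products on $O(2, 19)$, which exist once $p > 2d+4$ and provide sections of $\lambda^{\otimes k}$ whose zero divisor is supported on Noether--Lefschetz loci parametrizing K3s with extra algebraic classes.

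Given these, the argument I envision runs as follows. Starting from $[X, L] \in M_{\FF_p}$, I would use the Artin-invariant stratification of the supersingular locus together with a Bertini-type cut inside $\overline{M}$ to produce a complete curve $C$ through $[X, L]$ sweeping out a positive-dimensional family of supersingular K3s whose generic Picard rank is at most $\rk \Pic(X)$. Positivity of $\lambda$ forces any nonzero section of $\lambda^{\otimes k}|_C$ to have a zero on $C$; the Borcherds construction then places that zero on some Noether--Lefschetz divisor $NL_\beta$, so the Picard rank strictly jumps at that point. One then either iterates the construction inside successive intersections $\bigcap_i NL_{\beta_i}$ containing $[X, L]$, or arranges a single curve $C$ inside the full intersection of NL divisors through $[X, L]$; combined with a dimension count against the $9$-dimensional supersingular stratum and upper semicontinuity of the Artin invariant, this forces $\rk \Pic(X) = 22$, contradicting the standing assumption.

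I expect the principal obstacle to be ingredient (iii): Borcherds' singular theta lifts are built analytically over $\CC$, and constructing their mod-$p$ reductions requires both $p$-integrality of the $q$-expansion and nonvanishing after reduction, which is presumably exactly where the numerical bound $p > 2d+4$ is used to control the weight and level of the lift. A secondary obstacle is (i): the classical Kulikov--Pinkham--Persson proof relies on the complex monodromy weight filtration and the hyperk\"ahler metric, so a positive-characteristic version will require a log-crystalline substitute and careful comparison between de~Rham--Witt and crystalline cohomology for semistable K3 degenerations -- this is presumably the role played by the appendix on Clifford constructions and integral $p$-adic comparison functors.
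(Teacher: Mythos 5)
Your converse argument does not close. A zero of the Borcherds section on your complete curve $C$ produces a Picard-rank jump at \emph{some} point of $C$, but neither iterating inside intersections of Noether--Lefschetz divisors nor a dimension count against the $9$-dimensional supersingular locus transports that jump back to $[X,L]$: specialization only raises the rank at the special point, and to reach $22$ you would need on the order of twenty jumps while you have at most nine dimensions to cut, with no mechanism forcing the jumps to accumulate at the original point. The paper closes the loop with two theorems of Artin that your proposal never invokes: Artin's conjecture is already known for \emph{elliptic} K3 surfaces, and the Borcherds relation $\lambda^{\otimes a}=\calO(D)$ is deliberately arranged so that $D$ is supported on the elliptic Noether--Lefschetz divisors (Theorem \ref{Picardjumping} and Corollary \ref{elliptic-case}); and in a connected family of supersingular K3 surfaces the Picard rank is \emph{constant} (\cite{artin-ss}, Thm.~1.1). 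So once positivity of $\lambda$ forces the complete supersingular curve through $X$ to meet the elliptic locus, that fiber is supersingular and elliptic, hence of rank $22$, and constancy propagates rank $22$ to $X$. Without these two inputs there is no route from ``a jump somewhere on $C$'' to ``rank $22$ at $X$.''

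Several ingredients are also misplaced. There is no reduction of Borcherds products mod $p$: the identity $\lambda^{\otimes a}=\calO(D)$ is proved over $\CC$ by an elementary bound on cusp-form coefficients, descends to $\QQ$, and is then specialized to characteristic $p$ as an identity of line bundles over the smooth $W$-model of the moduli stack (Lemma \ref{specialize-line}); the hypothesis $p>2d+4$ is used only in T.~Saito's semistable reduction theorem for the pencil of genus $d+1$ curves (one needs $p>2g+2=2d+4$), i.e.\ in the degeneration step, not to control any $p$-integrality of the theta lift. That KPP-type step, in turn, is not used to build a toroidal compactification $\overline{M}$ --- none is constructed --- but to prove that supersingular K3 surfaces do not degenerate, so that a curve in the supersingular locus compactifies inside $\M_{2d}$ itself. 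Finally, positivity of the Hodge bundle in characteristic $p$ is not formal: it is obtained by pulling back the ample abelian Hodge bundle on $\A_{g,d',n}$ (Faltings--Chai) along the Kuga--Satake morphism and proving that morphism quasifinite; this is where the integral $p$-adic comparison theorems and the appendix's compatibility of Clifford constructions with the Fontaine--Laffaille functor are used, rather than in any log-crystalline substitute for the Kulikov--Pinkham--Persson classification as you suggest.
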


The only-if direction of the above conjecture follows from \cite{artin-mazur}, where they bound the Picard rank in terms of the height of $\widehat{\mathrm{Br}}(X)$.  The interesting direction is showing that supersingularity implies every class is algebraic.
In \cite{artin-ss}, Artin proves this conjecture when $X$ admits the structure of an elliptic fibration, modifying 
the argument of \cite{artin-sd} for the Tate conjecture in this setting.  Later, in \cite{rsz}, Rudakov--Zink--Shafarevich prove Artin's conjecture for K3 surfaces admitting a polarization of degree $2$.

In this paper, we show the following:
\begin{mainthm}
If $X$ is a supersingular K3 surface admitting a polarization of degree $2d$ with $p > 2d+ 4$, then Artin's conjecture holds for $X$.
If we assume semistable reduction for smooth projective surfaces over discrete valuation fields with residue field $k$, then Artin's conjecture holds if $p \geq 5$ and there exists a polarization on $X$ of degree prime-to-$p$.
\end{mainthm}
By semistable reduction over a discrete valuation field, we mean the existence of a semistable model over the valuation ring after a finite base change.

When $k = \overline{\FF}_p$, 
recall that a K3 surface $X$ over $k$ satisfies the Tate conjecture if, for every model $X'$ defined over $\FF_{p^{r}}$,
the map
$$c_{1}^{\et}: \Pic(X') \otimes \QQ_l \rightarrow H^2_{\et}(X, \QQ_l(1))^{\mathrm{Gal}(k/\FF_{p^{r}})}$$
is an isomorphism.
When $p \geq 5$, the Tate conjecture is known for K3 surfaces of finite height by work of Nygaard-Ogus \cite{nygaard-ogus}, so we have the following corollary:

\begin{corollary}
The Tate conjecture holds for K3 surfaces admitting a polarization of degree $2d$ such that $p > 2d+ 4$.  Assuming semistable reduction and $p \geq 5$, the Tate conjecture holds for K3 surfaces admitting a polarization of degree prime-to-$p$.
\end{corollary}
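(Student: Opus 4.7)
The plan is to dichotomize according to the height of the formal Brauer group $\widehat{\mathrm{Br}}(X)$: either $X$ has finite height or $X$ is supersingular. The finite height case will be handled by existing work of Nygaard--Ogus, and the supersingular case by the Main Theorem.

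In the finite height case, I would invoke \cite{nygaard-ogus}, who establish the Tate conjecture for K3 surfaces of finite height in characteristic $p \geq 5$. This hypothesis is met in both settings of the corollary: the assumption $p > 2d + 4$ forces $p \geq 7$ since $d \geq 1$, and in the second setting $p \geq 5$ is assumed outright. So nothing new is required in this case.

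In the supersingular case, I would apply the Main Theorem to obtain $\rk \Pic(X_{\bar{\FF}_p}) = 22$. Since the Chern class map
$$c_1^{\et}: \Pic(X_{\bar{\FF}_p}) \otimes \QQ_l \rightarrow H^2_{\et}(X_{\bar{\FF}_p}, \QQ_l(1))$$
is injective and both sides have dimension $22$, it is an isomorphism. For any model $X'/\FF_{p^r}$, this isomorphism is Galois-equivariant, so taking $\mathrm{Gal}(\bar{\FF}_p/\FF_{p^r})$-invariants yields an isomorphism on Galois invariants. By Hilbert 90 and the vanishing of the Brauer group of a finite field, the Hochschild--Serre spectral sequence for $\GG_m$ gives $\Pic(X') \simrightarrow \Pic(X_{\bar{\FF}_p})^{\mathrm{Gal}}$, and therefore $\Pic(X') \otimes \QQ_l \simrightarrow H^2_{\et}(X_{\bar{\FF}_p}, \QQ_l(1))^{\mathrm{Gal}}$, which is the Tate conjecture.

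The entire substance is concentrated in the Main Theorem; the corollary itself is formal once Picard rank $22$ is established in the supersingular case. Accordingly, there is no genuine obstacle here --- the only checks are that the Nygaard--Ogus hypothesis $p \geq 5$ is implied by the stated hypotheses (automatic) and that descent for the Picard group behaves cleanly over finite fields (also automatic).
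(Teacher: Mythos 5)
Your proposal is correct and matches the paper's (very brief) argument: dichotomize by the height of $\widehat{\mathrm{Br}}(X)$, use Nygaard--Ogus for finite height (the hypothesis $p \geq 5$ being automatic since $p > 2d+4 \geq 7$ in the first case), and use the Main Theorem plus the standard descent of Picard rank $22$ to the finite field in the supersingular case. The extra details you supply (injectivity of $c_1^{\et}$, Hochschild--Serre with Hilbert 90 and $\mathrm{Br}(\FF_{p^r})=0$) are exactly the routine steps the paper leaves implicit.
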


The basic idea of the proof is simple.
In his paper (\cite{artin-ss}, Thm.~$1.1$), Artin shows the following striking fact{\footnote{See \cite{johan-jumping} for an argument using crystalline instead of flat cohomology.}}:
in a connected family of supersingular K3 surfaces, although the Picard group can jump under specialization, the rank of the Picard group must remain constant.
Notice that, over $\CC$, such a statement is never true for non-isotrivial families of K3 surfaces, by \cite{oguiso}.

Using this fact and Artin's conjecture for elliptic K3 surfaces, it suffices to show that every connected component of the supersingular locus in the moduli space intersects the elliptic locus nontrivially.  This would force the Picard rank to be $22$ at one point and thus every point.
To show this, we proceed as follows:
\begin{enumerate}
\item  Using work of Borcherds, an elementary bound on coefficients of cusp forms shows the existence of an automorphic form on the moduli space of K3 surfaces (over $\CC$) whose zeroes and poles lie on the locus of elliptic K3 surfaces.  This gives an ample divisor supported on the elliptic locus which, in particular, must intersect any non-isotrivial family of K3 surfaces over a complete base.  

\item  To apply this in characteristic $p$, we use ideas from the minimal model program to show that any connected component of the supersingular locus contains complete curves.  This is where the assumption of either $p > 2d+4$ or semistable reduction is required.  

\item Finally, we show that the extension of our ample divisor to characteristic $p$ remains ample; this requires proving positivity of the Hodge bundle in characteristic $p$ when $p \nodivide 2d$ and $p \geq 5$.  This step is the most technically involved part of the paper as it involves comparison theorems in $p$-adic Hodge theory and the Kuga--Satake construction for K3 surfaces.  If we could lift the curves in step $2$ to characteristic zero, then this step would be unnecessary; however, in general this is impossible (e.g. the supersingular locus contains nonliftable rational curves).
\end{enumerate}

In the second step, we extend the classification of one-parameter semistable degenerations of K3 surfaces of Kulikov-Pinkham-Persson (\cite{kulikov,pinkham-persson}) to characteristic $p$ when $p > 2d+4$.   These results may be of independent interest, e.g., in extending Olsson's work on modular compactifications \cite{olsson-k3} to characteristic $p$.  

There are also geometric corollaries of the results in the third step.  For instance, it implies that the moduli space of polarized K3 surfaces in characteristic $p$ is quasiprojective, when $p \nodivide 2d$ and $ p \geq 5$, which seems to be new to the literature.  Using \cite{vandergeer-katsura}, we also show that the height of the formal Brauer group must jump in a proper nonisotrivial family of finite-height K3 surfaces.

As we were completing this manuscript, we learned that Keerthi Madapusi Pera has announced a proof of the Tate conjecture for K3 surfaces with prime-to-$p$ polarization, without any assumption on semistable reduction.  As his techniques seem largely different from ours, building on Kisin's work on integral models of Shimura varieties, we hope the geometric approach in this paper is still of interest.

\removelastskip\vskip.5\baselineskip\par\noindent{\bf Outline of paper.}
In Section $2$, we set up some notation and basic facts regarding K3 surfaces and Clifford algebras.  We also explain the construction of the moduli space of quasipolarized K3 surfaces in mixed characteristic.  
In Section $3$, we prove the statement regarding ample divisors on the elliptic locus in characteristic zero.  In fact, we give a more general version showing the existence of ample divisors supported on any infinite collection of Noether-Lefschetz loci.
In Section $4$, we prove the KPP classification when $p > 2d+4$ and apply it to show that supersingular K3 surfaces do not degenerate over discrete valuation rings.  In Section $5$, we prove the ampleness of the Hodge bundle on the moduli space of K3 surfaces.  This requires proving that a mixed-characteristic version of the Kuga--Satake morphism is quasifinite, which we do in Section $6$, using comparison theorems in $p$-adic Hodge theory.  In Section $7$, we explain how these steps give the proof of the main theorem.    Finally, in the appendix, Snowden shows a compatibility statement between the Clifford algebra functor and the $p$-adic comparison functors used in Section $6$.

\removelastskip\vskip.5\baselineskip\par\noindent{\bf Acknowledgments.}
We would like to thank Bhargav Bhatt, Alessio Corti, Johan de Jong, Igor Dolgachev, H\'{e}l\`{e}ne Esnault, Robert Friedman, Daniel Huybrechts, J\'anos Koll\'ar, Max Lieblich, Ben Moonen, Bjorn Poonen, Jordan Rizov, Matthias Schuett, Olivier Wittenberg, and Chenyang Xu for many helpful comments and discussions.  Thanks also to Snowden for providing the appendix.  

We are especially grateful to Christian Liedtke for many conversations early on in this project.  Among other things, it was his idea that Borcherds' work in characteristic zero should be relevant to Artin's conjecture, and our paper is based on this key insight.
The author is partially supported by a Clay Research Fellowship.

\section{Preliminaries}

In this section, we review some basic constructions regarding K3 surfaces that we will need later.

\subsection{Families of K3 surfaces}
Given a K3 surface $X$ over an algebraically closed field $k$, recall that a line bundle $L$ on $X$ is a polarization if it is ample. 
Similarly, $L$ is a quasipolarization if it is nef and big.  Equivalently, there exists $N$ such that $L^{\otimes N}$ is globally generated and the associated map $X \rightarrow |L^{\otimes N}|$ is
birational onto its image.  The degree of a quasipolarization is
$\deg c_1(L)^{2}\in \ZZ$; it is always a positive even integer.  A line bundle $L$ is primitive if it is not of the form $(L')^{\otimes m}$ for $m >1$.

Given a scheme $S$, let
$f: X \rightarrow S$
be a smooth, proper map from an algebraic space $X$.  We say it is a family of K3 surfaces over $S$ if for each geometric point
$s = \Spec K \rightarrow S$ with $K$ an algebraically closed field, the fiber
$X_s$ is a K3 surface over $K$.

Given an algebraic space $Y$, let $\Pic(Y)$ denote the Picard group of line bundles on $Y$.  Given a family of K3 surfaces $f: X \rightarrow S$, let $\underline{\Pic}_{X/S} \rightarrow S$ denote the Picard functor associated to $S$; by definition, it is the sheafification in the \'etale topology of the functor 
sending $T \rightarrow S$ to $\Pic(X\times_{S}T)/\Pic(T)$.  The sheaf $\underline{\Pic}_{X/S}$ is represented by an algebraic space over $S$.

Given an element $\xi \in \underline{\Pic}_{X/S}(S)$, it defines a polarization
on $f: X \rightarrow S$
if, for each geometric point $s \rightarrow S$, the pullback
$\xi_s \in \underline{\Pic}_{X/S}(s) = \Pic(X_s)$
is a polarization in the sense above.
Similarly, we say it is a quasipolarization (resp. primitive) if its pullback to each geometric fiber is a quasipolarization (resp. primitive) in the sense above.
If $S$ is connected, the degree of $\xi$ is defined to be the degree of $\xi_s$ for any geometric point $s$.

Given $\xi\in \underline{\Pic}_{X/S}(S)$, the obstruction to lifting it to an element of $\Pic(X)$ is an element in the Brauer group $\mathrm{Br}(S)= H^2_{\et}(S,\GG_m)$.
In particular, given a polarized family of K3 surfaces, there exists an \'etale cover $T \rightarrow S$ such that $f_T: X_T \rightarrow T$ is a projective morphism of schemes.

We define the groupoid-valued functor $\M_{2d}$ on schemes by
\begin{align*}
\M_{2d}(S) = \{f: X \rightarrow S, \xi \in \underline{\Pic}_{X/S}(S)| &X \textrm{ is a family of K3 surfaces over }S,\\
&\text{with primitive quasipolarization }\xi, \deg \xi = 2d\}.
\end{align*}
We similarly define the functor $\M_{2d}^{\circ}$ of primitively polarized K3 surfaces; since ampleness is an open condition, it is easy to see that $\M_{2d}^{\circ}$ is an open subfunctor of $\M_{2d}$.
We will use the notation $\M_{2d,R}$ and $\M^{\circ}_{2d,R}$ to denote the restriction to schemes over $\Spec R$.

\begin{prop}
These functors are Deligne--Mumford stacks of finite type over $\Spec(\ZZ)$.
The stack
$\M_{2d,\ZZ[1/2d]}$ is smooth over $\Spec \ZZ[1/2d]$.
\end{prop}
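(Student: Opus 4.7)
The plan is to construct $\M_{2d}$ and $\M^\circ_{2d}$ as quotient stacks of (open loci in) appropriate Hilbert schemes, and to use deformation theory of the pair $(X,L)$ to establish smoothness over $\ZZ[1/2d]$.

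For the polarized moduli $\M^\circ_{2d}$, first observe that by Saint-Donat's results on projective embeddings of K3 surfaces, there exists an integer $n$ depending only on $d$ such that for any primitively polarized K3 surface $(X,L)$ of degree $2d$ over any algebraically closed field, $L^{\otimes n}$ is very ample with vanishing higher cohomology; the Hilbert polynomial $P(t) = dn^2 t^2 + 2$ then depends only on $d$ and $n$. Let $H$ denote the open subscheme of $\mathrm{Hilb}^P_{\PP^N_{\ZZ}}$, with $N = dn^2 + 1$, parametrizing smooth K3 surfaces whose embedding line bundle $\calO_X(1)$ is $n$ times a primitive class in $\Pic(X)$. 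The natural $\mathrm{PGL}_{N+1}$-action exhibits $\M^\circ_{2d}$ as $[H/\mathrm{PGL}_{N+1}]$, so it is algebraic of finite type over $\Spec\ZZ$, and Deligne--Mumford because the automorphism group of any polarized K3 surface is finite and reduced by Matsusaka--Mumford.

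For the quasipolarized moduli $\M_{2d}$, given a family $(X/S,\xi)$ with $\xi$ a quasipolarization of degree $2d$, a sufficiently high power $\xi^{\otimes n}$ is globally generated on every geometric fiber and defines a birational morphism $X \to \bar X$ onto a K3 surface with at worst rational double point singularities, with a corresponding ample line bundle $\bar L$ on $\bar X$. Applying the Hilbert scheme argument above to $(\bar X, \bar L)$, together with the fact that simultaneous minimal resolutions of ADE surface singularities exist in families, yields $\M_{2d}$ as a similar quotient stack. The fact that $\xi \in \underline{\Pic}_{X/S}(S)$ need not lift to an honest line bundle globally is handled by working on an \'etale cover where it does lift, as noted earlier in the text, and this does not affect being a DM stack of finite type.

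For smoothness over $\ZZ[1/2d]$, I would apply deformation theory to the pair $(X,L)$ over an algebraically closed field $k$ of characteristic $p$ with $p\nmid 2d$. Deformations and obstructions are controlled by the Atiyah extension
\begin{equation*}
0 \to \calO_X \to \calE_L \to T_X \to 0,
\end{equation*}
with tangent space $H^1(X,\calE_L)$ and obstruction space $H^2(X,\calE_L)$. The associated long exact cohomology sequence yields
\begin{equation*}
H^1(X,T_X) \xrightarrow{\,\smile c_1(L)\,} H^2(X,\calO_X) \to H^2(X,\calE_L) \to H^2(X,T_X).
\end{equation*}
On a K3 surface the holomorphic symplectic form gives $T_X \cong \Omega^1_X$, so $H^2(X,T_X) = H^2(X,\Omega^1_X) \cong H^0(X,T_X)^\vee = 0$. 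It remains to show that $\smile c_1(L)$ surjects onto the one-dimensional space $H^2(X,\calO_X)$. Using $T_X\cong\Omega^1_X$ again, the map is identified with cup product with $c_1(L)$ inside $H^{\bullet}(X,\Omega^{\bullet}_X)$, and its value on $c_1(L)\in H^1(X,\Omega^1_X)$ itself is $c_1(L)^2 = 2d \in k$. Since $p\nmid 2d$, this is nonzero, so $H^2(X,\calE_L) = 0$ and $(X,L)$ has unobstructed deformations.

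The main obstacle is the smoothness step: the arithmetic hypothesis $p\nmid 2d$ enters precisely through the surjectivity of cup product with $c_1(L)$, and in characteristics dividing $2d$ the moduli can genuinely fail to be smooth. In contrast, exhibiting $\M_{2d}$ as a quotient of a Hilbert scheme is relatively formal, given the standard projective-embedding and contraction results for K3 surfaces recalled above.
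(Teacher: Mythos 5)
Your representability argument is essentially the paper's: both routes contract a quasipolarized family via a power of the (quasi)polarization (Saint-Donat, plus Terakawa in characteristic $2$) onto a polarized K3 with at worst rational double points, realize the polarized double-point moduli via Hilbert schemes, and pass back using Artin--Brieskorn simultaneous resolution in families. Two steps that the paper carries out and you elide should be included: the verification that ``at worst rational double point singularities'' is an open condition in a family with isolated singularities (the paper deduces this from Theorems $1$ and $3$ of \cite{artin-brieskorn}), which is what makes the relevant Hilbert-scheme locus open, and the descent of the quasipolarization to the contracted family (the paper invokes \cite{artin-sing}), which is what produces the inverse map from the quasipolarized moduli to the stack of double-point K3s with simultaneous resolution.

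Where you genuinely diverge is smoothness. The paper simply quotes Deligne: when $2d$ is invertible the image of $c_1(L)$ in $H^1(X,\Omega^1_X)$ is nonzero, so by \cite{deligne-k3} the versal deformation of the pair is a hypersurface in $\Spf W[[t_1,\dots,t_{20}]]$, smooth over $\Spf W$. You instead prove unobstructedness directly from the Atiyah extension, and your computation that the connecting map $H^1(X,T_X)\to H^2(X,\calO_X)$ takes the class corresponding to $c_1(L)$ to $c_1(L)^2=2d\neq 0$ in $k$ is correct; this is more self-contained and makes transparent exactly where $p\nmid 2d$ enters. Two points need shoring up. First, $H^2(X,T_X)=0$ is not formal in characteristic $p$: via Serre duality and $T_X\cong\Omega^1_X$ it is the statement $H^0(X,\Omega^1_X)=0$ (equivalently, no nonzero global vector fields), a theorem of Rudakov and Shafarevich, which can also be deduced from the degeneration of Hodge--de Rham proven in \cite{deligne-k3} together with $b_1=0$; this input must be cited, not asserted. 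Second, the proposition claims smoothness over $\Spec\ZZ[1/2d]$, so you need unobstructedness along mixed-characteristic thickenings such as $W_2(k)\to k$, not only over Artinian $k$-algebras; the obstruction for the pair does lie in $H^2(X,\calE_L)\otimes I$ in this generality (for instance by viewing $(X,L)$ as its associated $\GG_m$-torsor and using that $\GG_m$ is linearly reductive), but you should say so explicitly, since this is precisely the content of the appeal to \cite{deligne-k3} that you are replacing. With these additions your argument is complete and proves the same statement.
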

This proposition is well-known, but since we can only find partial statements in the literature (\cite{olsson-k3} over $\QQ$ and \cite{rizov-k3} for the polarized case), we give a brief argument here.
Note that these stacks will not be separated because of the possibility of flops of $-2$-curves in one-parameter families of quasipolarized K3 surfaces.

\begin{proof}
Since the locus where a quasipolarization is primitive is open, it suffices to show representability without this condition.  

Given an algebraically closed field $k$, a double-point K3 surface $Y$ over $k$ is a projective surface with at worst rational double point singularities
whose minimal resolution is a (smooth) K3 surface over $k$.  Given a base scheme $S$, we define a family of double-point K3 surfaces, equipped with a polarization of degree $d$, over $S$ as before.  

We first show that, given a family of surfaces $g: Y \rightarrow B$ with isolated singularities, the property of having at-worst rational double point singularities is an open condition.  
Indeed, observe that, using upper semicontinuity of higher direct images, this is easy for a family which admits a simultaneous resolution in the sense of Artin--Brieskorn \cite{artin-brieskorn}.  In general, it suffices to show that the locus
is constructible and preserved under generization.  Constructibility
follows by applying Theorem $1$ of \cite{artin-brieskorn} to take a base change $B' \rightarrow B$ for which there exists a simultaneous resolution and using openness there.
 For generization, it suffices to check this at the complete local ring
for the versal deformation of a rational double point singularity; since rational double
points are unobstructed, Theorem $3$ of \cite{artin-brieskorn} shows that the rational double point locus is
open after taking a finite surjective base change.  This implies the
locus is open before the base change as well.

Furthermore, given a double-point K3 surface over an algebraically closed field $k$, with polarization $L$, the line bundle $L^{\otimes 3}$ is very ample, by Theorem $8.3$ of \cite{saint-donat} and the main theorem of \cite{terakawa} for $p=2$.
We can use Hilbert scheme arguments to show that the moduli functor $\M^{\doubp}_{2d}$ of polarized degree $2d$ double-point K3 surfaces is representable by a Deligne--Mumford stack of finite type.

To show representability of $\M_{2d}$, it suffices to construct a morphism $\pi: \M_{2d} \rightarrow \M^{\doubp}_{2d}$, representable in algebraic spaces.
Given a family of double-point K3 surfaces $g: Y \rightarrow S$, a simultaneous resolution of $g$ is a map of algebraic spaces $h: X \rightarrow Y$ such that the composition $g\circ h: X \rightarrow S$ is a family of K3 surfaces.  
We define the stack $\F_{2d}$ parametrizing families (over $S$) of double-point K3 surfaces $Y \rightarrow S$, equipped with a polarization of degree $d$ and a simultaneous resolution $h: X \rightarrow Y$.  Theorem $1$ of \cite{artin-brieskorn} shows that the natural morphism $\F_{2d} \rightarrow \M^{\doubp}_{2d}$ is representable by algebraic spaces.

Furthermore, since the pullback of the polarization on $Y$ defines a quasipolarization on $X$, we have a map $\F_{2d} \rightarrow \M_{2d}$.  
Using sections of a sufficiently high power of the quasipolarization ($3$ is enough), we can produce a family of polarized double-point K3 surfaces from a quasipolarized family of smooth K3 surfaces.  Using \cite{artin-sing} to show the quasipolarization descends, this provides an inverse map $\M_{2d} \rightarrow \F_{2d}$, so $\M_{2d}$ is also a Deligne--Mumford stack.


For smoothness, given a quasipolarized K3 surface $(X,L)$ over a field $k$, if $2d$ is invertible in $k$, then the image of $c_{1}(L)$ in $H^1(X,\Omega^1_{X})$ is nonzero.  Therefore, by \cite{deligne-k3}, the versal deformation space is a hypersurface inside $\Spf W[[t_1,\dots, t_{20}]]$, smooth over $\Spf W$.
\end{proof}

\subsection{Clifford algebras}\label{cliffordsection}

In this section, we recall basic definitions for Clifford algebras and the Spin group.

\begin{defn}
Given a commutative ring $R$ and  a free $R$-module $M$ of finite rank equipped with a quadratic form $q: M \rightarrow R$,
the Clifford algebra 
$$\Cl(M,q) = TM/\langle m\otimes m = q(m)\rangle$$
is the quotient of the tensor algebra of $M$ by the two-sided ideal generated by the relation $q(m) = m \otimes m$ for all $m \in M$.
It is a $\ZZ/2$-graded, free module over $R$ with rank $2^{\mathrm{rank}(M)}$.
The even Clifford algebra $\Cl_+(M,q)$ is the even-graded component; it is also a free module of rank $2^{\mathrm{rank}(M)-1}$.
\end{defn}
In the examples we are interested in, the quadratic form will always be of the form $q(m) = \psi(m,m)$ for a symmetric bilinear form $\psi$ on $M$ and we
will use the notation $\Cl_+(M,\psi)$.
If the quadratic form is clear from context, we will typically suppress it from notation.  If we have an ordered basis of $M$, indexed
by a set $S$, then there is an associated basis of $\Cl_+(M)$ indexed by subsets $T$ of $S$ with even cardinality (obtained by multiplying the basis elements in the subset).
Note also that there exists a unique algebra anti-automorphism $\iota$ on $\Cl_+(M)$ that fixes $M$

We state now a few examples that we will use later.

\begin{example}\label{k3example}

Consider the integral lattice of rank $22$ 
\begin{equation}\label{k3lattice}
\LL = U^{\oplus 3} \oplus E_8^{\oplus 2}
\end{equation}
where $U$ is the hyperbolic lattice of rank $2$ and $E_8$ denotes the $E_8$-lattice.  If $e,f$ are standard basis elements of the first copy of the hyperbolic lattice (so $\langle e,e \rangle = \langle f,f \rangle = 0$ and 
  $\langle e,f\rangle = 1$), fix
$v_d = e- df$ and take its orthogonal complement
$$L_{2d} = (e - df)^{\perp} \subset \LL.$$
It is an indefinite lattice of rank $21$ with bilinear form $\psi$ of signature $(19,2)$.  In Section \ref{Hodgepositivity} we will use the associated even Clifford algebra $\Cl_+(L_{2d},\psi)$ in the context of the Kuga--Satake construction.  Similarly, if we tensor with $\QQ$, we have the quadratic space $V_{2d}$ and its even Clifford algebra $\Cl_+(V_{2d},\psi) = \Cl_+(L_{2d},\psi)\otimes\QQ$.    
\end{example}

\begin{example}
Given any K3 surface $X$ over an algebraically closed field $k$ with quasipolarization $L$, and a prime $l \neq\mathrm{char}(k)$, we can consider $H^2_{\et}(X, \ZZ_l(1))$
with the pairing given by the negative Poincar\'e pairing
$$\psi_{\et}: H^{2}_{\et}(X, \ZZ_l(1)) \otimes H^2_{\et}(X, \ZZ_l(1) \rightarrow H^4_{\et}(X, \ZZ_l(2)) = \ZZ_l$$
and the primitive cohomology given by taking the orthogonal complement of $c_1(L)$
$$P^2_{\et}(X, \ZZ_l(1)) = \langle c_1(L) \rangle^{\perp} \subset H^2_{\et}(X, \ZZ_l(1)).$$ 
If $L$ is primitive, then by lifting $(X,L)$ to characteristic zero and applying proper and smooth base change, we see that
$$\Cl_+(P^2_{\et}(X,\ZZ_l(1)), \psi_{\et}) \cong \Cl_+(L_{2d}) \otimes \ZZ_l$$
as $\ZZ_l$-modules.

Since $\psi_{\et}$ is compatible in families, the Clifford construction behaves well in families.
More precisely, given any family of K3 surfaces $f: X \rightarrow S$ with primitive quasipolarization $\xi$ of degree $2d$, and a prime $l$ invertible on $S$,
we can consider the lisse \'etale sheaf $R^2_{\et}f_*(\ZZ_l(1))$ on $S$, equipped with the pairing $\psi_{\et}$.
The quasipolarization $\xi$ defines a trivial rank $1$ subsheaf, and we can again take its orthogonal complement (with respect to $\psi_{\et}$):
$$\Pet(\ZZ_l(1)) := \langle c_{1}(\xi)\rangle^{\perp}\subset R^2_{\et}f_{*}(\ZZ_l(1))$$
and its associated Clifford algebra
$$\Cl_+(\Pet(\ZZ_l(1))),$$
a lisse \'etale sheaf on $S$ whose restriction to any geometric point agrees with the construction from the previous paragraph.
\end{example}

\begin{example}\label{clifford-dR}
We have de Rham versions of the previous example.  Given $(X,L)$ over $k$, such that $\charr k \nodivide 2d$, we have the de Rham cohomology group
$H^{2}_{\DR}(X)$ which is a $22$-dimensional $k$-vector space, equipped with
the descending Hodge filtration
$$0 \subset F^2 \subset F^1 \subset F^0 = H^{2}_{\DR}(X)$$
such that $(F^2)^{\perp} = F^1$.  It follows from \cite{deligne-k3} that the Hodge-to-de-Rham spectral sequence degenerates at $E_1$ and the dimensions of the associated graded pieces are $1$, $20$, and $1$ respectively.

As before, we consider the orthogonal complement of $c_1(L)$ with respect to the Poincar\'e pairing,
$$P^2_{\DR}(X) = \langle c_1(L)\rangle^{\perp} \subset H^{2}_{\DR}(X)$$
which inherits a three-step Hodge filtration with graded pieces of dimensions $1$, $19$, and $1$, and nondegenerate negative Poincar\'e pairing
$\psi_{\DR}$.
If we use $\{1\}$ to denote shifting filtration degree down by $1$, then the pairing defines a map of filtered vector spaces
$$\psi_{\DR}: P^2_{\DR}(X)\{1\} \otimes P^2_{\DR}(X)\{1\} \rightarrow k$$
where $k$ has trivial filtration concentrated in degree $0$.
The even tensor algebra on $P^2_{\DR}(X)\{1\}$ is equipped with a natural descending filtration induced from the Hodge filtration; therefore, we also have a natural descending filtration on the associated even Clifford algebra
$$\Cl_+(P^2_{\DR}(X)\{1\}),$$ obtained by taking the quotient filtration.
By choosing a filtered basis, it is easy to see that, since $P^2_{\DR}(X)\{1\}$ has one-dimensional isotropic $F^1$,
the filtration $\Fil$ on the Clifford algebra
has nonzero graded pieces only in degrees $1$, $0$ and $-1$,
and that $\Fil^1$
is spanned by elements of the form
$$\omega \cdot \prod \gamma_i$$
with $\omega \in F^1(P^2_{\DR}(X)\{1\})$
and $\gamma_i \in F^0(P^2_{\DR}(X)\{1\})$.

The same statements apply in families.  Suppose we are given a family of K3 surfaces
$f: X \rightarrow B$ with $2d$ invertible on $B$, such that $X$ is a scheme, equipped with a line bundle $L$ that gives a relative polarization.
Relative de Rham cohomology $R^2f_*(\Omega^*_{X/B})$ is a locally free sheaf of rank $22$.  It is equipped with the
descending Hodge filtration; the steps of this filtration are locally free subsheaves $\calF^i$ which are locally direct summands (this follows from the degeneration proven in \cite{deligne-k3}).  Again, if we take the orthogonal complement to $c_1(L)$,
we have an orthogonal splitting
$$R^2f_*(\Omega^*_{X/B}) = P^2_{\DR}(f) \oplus \calO_B\cdot c_1(L).$$
The locally free sheaf $P^2_{\DR}(f)\{1\}$ has a nondegenerate pairing and Hodge filtration (which we also denote by $\calF^i$), obtained by restriction.  The associated
Clifford algebra $$\Cl_+(P^2_{\DR}(f)\{1\})$$
is a locally free sheaf on $B$.  As in the last paragraph, it inherits a descending filtration
$\Fil^k$ which restricts on geometric points to the filtration given there.  By choosing (locally on $B$) a filtered basis of $P^2_{\DR}(f)$, we can see that the subsheaves of the filtration $\Fil^k$ are locally free and locally direct summands.
\end{example}

Finally, we recall here the definition of the Clifford and Spin groups.
\begin{defn}
The algebraic group $\mathrm{CSpin}(V_{2d})$ over $\QQ$ is given by
$$\mathrm{CSpin}(V_{2d}) = \{ g \in \Cl_+(V_{2d})^{*}| gV_{2d}g^{-1} \subset V_{2d}\}.$$
The adjoint action of $\mathrm{CSpin}(V_{2d})$ on $V_{2d}$ defines a map of algebraic groups 
$$\mathrm{ad}: \mathrm{CSpin}(V_{2d}) \rightarrow \mathrm{SO}(V_{2d}).$$
The Spin group $\mathrm{Spin}(V_{2d})$ is defined to be the kernel of the Norm map
$$\mathrm{Nm}: \mathrm{CSpin}(V_{2d}) \rightarrow \GG_m$$
given by $g \mapsto \iota(g)g\in \GG_m$.
\end{defn}

\subsection{Level structures}

It will be useful later to consider moduli of K3 surfaces with level structure determined by finite index subgroups of the groups $\mathrm{SO}$ and $\mathrm{CSpin}$.  The material in this section follows the discussion in \cite{andre, rizov-k3}.
We fix a degree $2d$ and an integer $n \geq 3$.  In practice, we will only work with level $n=4$.

Let $\ZZhat$ denote the profinite completion of $\ZZ$ and $\mathbb{A}_f = \ZZhat \otimes \QQ$ denote the ring of finite adeles.
Let $$\mathrm{CSpin}(L_{2d}) = \mathrm{CSpin}(V_{2d})(\mathbb{A}_f) \cap \Cl_{+}(L_{2d}\otimes \ZZhat)^*$$
and
let $$\KK_n^{\mathrm{sp}} \subset \mathrm{CSpin}(L_{2d})$$
be the open subgroup consisting of elements $\equiv 1 \bmod n$.
Finally, we set $$\KK_{n}^{\mathrm{ad}} \subset \mathrm{SO}(L_{2d}\otimes \ZZhat)$$ to be its image under the adjoint map.  It is proven in (\cite{andre},4.3) that this is an open subgroup of finite index.   
By construction, we can write $\KK_{n}^{\mathrm{ad}} = \prod_{p} \KK_{n,p}$ with respect to the decomposition $\mathrm{SO}(L_{2d}\otimes \ZZhat) = \prod_{p}
\mathrm{SO}(L_{2d}\otimes \ZZ_p)$.

Let $T$ be the set of primes dividing $2dn$, and let $\ZZ_T = \prod_{p \in T} \ZZ_p$.  Again, by (\cite{andre},4.3), 
this set includes all primes for which $\KK_{n,p}$ is a proper subgroup.
Given a family of K3 surfaces $f: X \rightarrow B$, with a primitive quasipolarization $\xi$ of degree $2d$, such that $B$ is connected and $2dn$ is invertible on $B$, we define a spin level $n$ structure as follows.

Fix a geometric base point $\bbar \rightarrow B$, and let
$P_{\et}^2(X_{\bbar},\ZZ_T(1))$ denote the primitive cohomology of the geometric fiber with coefficients in $\ZZ_T(1)$, defined as before by taking the orthogonal complement
of $c_1(\xi)$ with respect to the Poincar\'e pairing; it carries an action of $\pi_1^{\et}(B,\bbar)$.

Since $\xi$ is primitive, by lifting $(X_{\bbar},\xi)$ to characteristic zero, one can see that $P_{\et}^2(X_{\bbar},\ZZ_T(1))$ with the (negative) Poincar\'e pairing is isomorphic 
to $L_{2d}\otimes  \ZZ_T$.  Let $$\mathrm{Isom}( L_{2d}\otimes \ZZ_T, P_{\et}^2(X_{\bbar},\ZZ_T(1)))$$ denote the (nonempty) set of isometries between these two spaces, 
equipped with an action of $\prod_{p\in T} \KK_{n,p}$ via the left factor and an action of $\pi_1^{\et}(B,\bbar)$ via the right factor.

\begin{defn} A spin level $n$ structure on $(X,B,f, \xi)$ is an element of
$$\left(\prod_{p \in T} \KK_{n,p} \backslash \mathrm{Isom}( L_{2d}\otimes \ZZ_T, P_{\et}^2(X_{\bbar},\ZZ_T(1)))\right)^{\pi_1^{et}}.$$
\end{defn}
This definition is independent of the choice of base point; it extends to disconnected bases by working on each connected component separately.

We define the moduli functor $\M_{2d, n}$ of primitively quasipolarized K3 surfaces of degree $2d$, equipped with a spin structure of level $n$ in the obvious way.
\begin{prop}
$\M_{2d, n}$ is a smooth algebraic space over $\ZZ[1/2dn]$.  The forgetful map
$$\pi: \M_{2d,n} \rightarrow \M_{2d, \ZZ[1/2dn]}$$
is finite and \'etale.
\end{prop}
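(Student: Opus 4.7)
The strategy is to identify $\pi$ with a standard torsor construction, which will make finite-\'etaleness and smoothness automatic, and then to rule out nontrivial automorphisms via Minkowski-type rigidity combined with the faithfulness of the action of $\mathrm{Aut}(X,\xi)$ on \'etale cohomology. First I would realize $\pi$ as a finite \'etale cover as follows. On $\M_{2d,\ZZ[1/2dn]}$ the primitive cohomology $\Pet(\ZZ_T(1))$ is a lisse \'etale sheaf of rank $21$ with non-degenerate Poincar\'e pairing, so its sheaf of isometries
$$ \mathcal{I} := \underline{\mathrm{Isom}}\bigl(L_{2d}\otimes\ZZ_T,\ \Pet(\ZZ_T(1))\bigr) $$
is a right $\mathrm{SO}(L_{2d}\otimes\ZZ_T)$-torsor in the pro-\'etale topology. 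The left quotient by the finite-index open subgroup $\prod_{p\in T}\KK_{n,p}$ is a finite \'etale algebraic space over $\M_{2d,\ZZ[1/2dn]}$, and its sections classify precisely spin level $n$ structures. Hence $\pi$ is finite and \'etale. Since the base is smooth over $\ZZ[1/2dn]$ by the preceding proposition (using that $2d$ is a unit there), $\M_{2d,n}$ is also smooth over $\ZZ[1/2dn]$.

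The substantive point is showing $\M_{2d,n}$ is an actual algebraic space, i.e.\ that geometric automorphism groups are trivial. Let $(X,\xi,\alpha)$ be a geometric point over an algebraically closed field $k$ and let $\phi \in \mathrm{Aut}(X,\xi)$ fix $\alpha$. Via $\alpha$, the induced map $\phi^{*}$ on $\Pet(X,\ZZ_T(1))$ lies in $\prod_{p\in T}\KK_{n,p}$. Since $\KK_n^{\mathrm{sp}}$ consists of spin elements $\equiv 1\bmod n$, a direct computation with $\mathrm{ad}(1+nh)(v) = v + n[h,v] + O(n^{2})$ shows that $\KK_n^{\mathrm{ad}}$ is contained in the mod-$n$ congruence subgroup of $\mathrm{SO}(L_{2d}\otimes\ZZhat)$, hence $\phi^{*}\equiv 1\bmod n$. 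Now $\mathrm{Aut}(X,\xi)$ is finite (classical for polarized K3 surfaces), so $\phi^{*}$ is torsion. I would then pick a prime $\ell\mid n$ for which Minkowski's lemma applies to the $\ell$-adic factor --- any odd prime divisor of $n$ works, and if $n$ is a power of $2$ then $n\geq 3$ forces $n\geq 4$ so $\ell=2$ works as well --- and note $\ell\neq\mathrm{char}(k)$ since $\M_{2d,n}$ lives over $\ZZ[1/2dn]$. Minkowski then forces $\phi^{*}_{\ell} = \mathrm{id}$ on $\Pet(X,\ZZ_\ell(1))$.

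Combined with $\phi^{*}(c_{1}(\xi)) = c_{1}(\xi)$, this shows $\phi^{*}$ acts trivially on $H^{2}_{\et}(X,\ZZ_\ell(1))$, and the standard faithfulness of the action $\mathrm{Aut}(X,\xi) \hookrightarrow \mathrm{Aut}(H^{2}_{\et}(X,\ZZ_\ell(1)))$ for polarized K3 surfaces (e.g.\ \cite{rizov-k3}) then gives $\phi = \mathrm{id}$. The main obstacle is this final step, which combines the Minkowski congruence-subgroup rigidity (plus the bookkeeping that $\KK_n^{\mathrm{ad}}$ really is mod-$n$ congruent) with the cohomological faithfulness of automorphisms; everything else is formal once the torsor description is in place.
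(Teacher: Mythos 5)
Your proposal is correct and follows essentially the same route as the paper: the finite \'etale and smoothness claims come from the monodromy/torsor description of spin level structures via $\prod_{p\in T}\KK_{n,p}\backslash\mathrm{Isom}(L_{2d}\otimes\ZZ_T, P^2_{\et})$, and rigidity comes from showing a finite-order automorphism fixing the level structure acts trivially mod $n$ (your explicit check that $\KK_n^{\mathrm{ad}}$ lies in the mod-$n$ congruence subgroup is the step the paper leaves implicit) and then invoking \cite{rizov-k3}, 3.3.2. Your use of Minkowski's lemma at a prime $\ell\mid n$ is just a repackaging of the paper's argument that roots of unity congruent to $1\bmod n$ with $n\geq 3$ equal $1$, so the two proofs coincide in substance.
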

\begin{proof}
Given a connected scheme $B$ with geometric base point $\bbar$ and a map $B \rightarrow \M_{2d,\ZZ[1/2dn]}$, the fiber product
$$B' = \M_{2d,n} \times_{\M_{2d,\ZZ[1/2dn]}} B$$
is precisely the finite \'etale cover corresponding to the finite $\pi_1^{\et}(B,\bbar)$-set
$$\prod_{p \in T} \KK_{n,p} \backslash \mathrm{Isom}( L_{2d}\otimes \ZZ_T, P^{2}_{\et}(X_{\bbar},\ZZ_T(1))).$$
This shows the second claim, as well as smoothness and representability as a Deligne--Mumford stack.  It remains to show that there are no nontrivial automorphisms at any geometric point.

Suppose we have a quasipolarized K3 surface $(X,L)$ over an algebraically closed field $k$, and a finite order automorphism $\sigma$ of the pair that acts trivially on
$P_{\et}^2(X, \ZZ_T(1))\otimes \ZZ/n\ZZ$.  Since $n \geq 3$, the eigenvalues of $\sigma^*$ on 
$P_{\et}^2(X, \ZZ_T(1))$ are roots of unity which are $1 \bmod n$, which can only happen if they are equal to $1$.  Since $\sigma$ is finite order, it is semisimple, so acts trivially on $P_{\et}^2(X, \ZZ_T(1))$ and thus $H^2_{\et}(X, \ZZ_T(1))$.  Therefore, by \cite{rizov-k3}, 3.3.2, $\sigma$ must be be trivial.
\end{proof}

\section{Picard lattices in proper families}\label{sectionpicardjumping}

In this section, we will work over $\CC$ and study the moduli space $\M_{2d,\CC}$.

Let $\pi: \calX \rightarrow \M_{2d,\CC}$ be the universal family, and let
$$\lambda = \pi_*(\omega_{\calX/\M_{2d,\CC}})$$
denote the Hodge bundle on the moduli space.

Consider a collection of pairwise non-isomorphic rank $2$ lattices of the form:
$$\Lambda_{k} =  \left( \begin{array}{cc}
2d & a_{k}  \\
a_{k} & 2b_{k}  \end{array} \right),$$
for $k \in \ZZ^+$
with $\disc \Lambda_k < 0$.  
Let $D_{\Lambda_{k}} \subset \M_{2d,\CC}$ be the locus of quasipolarized K3 surfaces $(X,L)$ for which there exists an embedding of lattices
$$\Lambda_{k} \hookrightarrow \Pic(X)$$
that sends the first basis vector of $\Lambda_k$ to $L$.  This defines a divisor on $\M_{2d,\CC}$.

The main result of this section is the following:

\begin{thm}\label{Picardjumping}
There exists a Cartier divisor $D$ supported on a finite union of these divisors
$$\bigcup_{i=1}^{m} D_{\Lambda_{k_{i}}}$$
such that
$$\lambda^{\otimes a} = \calO(D) \in \Pic(\M_{2d,\CC})$$
for some $a > 0$.
\end{thm}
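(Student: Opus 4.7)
\medskip

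\noindent\textbf{Proof plan.} The idea is to realize the Hodge bundle $\lambda$ as the tautological automorphic line bundle on a period domain and then invoke Borcherds' theta lift to manufacture a meromorphic section of some power of $\lambda$ whose divisor is supported on Noether--Lefschetz divisors.

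\emph{Step 1 (Period domain realization).} First I would identify $\M_{2d,\CC}$, up to a finite \'etale cover by the spin level structure moduli $\M_{2d,n,\CC}$, with an arithmetic quotient $\Gamma\backslash\calD$, where $\calD$ is the Hermitian symmetric domain of oriented negative-definite $2$-planes in $L_{2d}\otimes\RR$ (signature $(19,2)$) and $\Gamma$ is the image in $\mathrm{O}(L_{2d})$ of $\KK^{\mathrm{sp}}_n$. Under this identification, the Hodge line bundle $\lambda$ pulls back to the tautological automorphic line bundle $\mc L$ on $\Gamma\backslash\calD$ of weight one (i.e.\ whose sections are holomorphic functions on $\calD$ transforming as modular forms of weight one for $\Gamma$). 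The divisors $D_{\Lambda_k}$ pull back to Heegner (special) divisors attached to the sublattices $\Lambda_k\subset L_{2d}$ containing the polarization class.

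\emph{Step 2 (Borcherds' singular theta lift).} Next I would apply Borcherds' theorem (Theorem 13.3 of \textit{Automorphic forms with singularities on Grassmannians}) to $L_{2d}$. For $n=19$, this gives a multiplicative lift
\[
\Psi:\ M^!_{-17/2}(\rho_{L_{2d}})\longrightarrow \bigsqcup_{a\ge 0} H^0(\Gamma\backslash\calD,\mc L^{\otimes a})_{\mathrm{mero}},
\]
from vector-valued weakly holomorphic modular forms of weight $1-n/2=-17/2$ for the Weil representation $\rho_{L_{2d}}$ attached to the discriminant form of $L_{2d}$, to meromorphic sections of powers of $\mc L$. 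The weight of $\Psi(f)$ is $c_0(0)/2$, where $c_0(0)$ is the constant term of the component of $f$ indexed by the trivial coset, and the divisor of $\Psi(f)$ is an explicit integer linear combination of Heegner divisors indexed by the principal-part Fourier coefficients of $f$. Under Step~1, those Heegner divisors correspond to a finite union of the $D_{\Lambda_k}$.

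\emph{Step 3 (Existence of the input form).} The remaining task is to exhibit at least one $f\in M^!_{-17/2}(\rho_{L_{2d}})$ with $c_0(0)>0$. This is where the "elementary bound on coefficients of cusp forms" mentioned in the introduction enters. By Serre duality for vector-valued modular forms (Borcherds' obstruction principle), the principal parts of elements of $M^!_{-17/2}(\rho_{L_{2d}})$ are precisely those tuples $(c_\mu(-n))_{n>0}$ satisfying finitely many linear pairings with the finite-dimensional space $S_{21/2}(\rho^\vee_{L_{2d}})$ of holomorphic cusp forms of dual weight. Since the space of allowable principal parts up to order $N$ grows linearly in $N$ while the obstruction space has fixed finite dimension, one can freely prescribe a principal part of sufficiently large order, and in particular arrange that the resulting $f$ has $c_0(0)>0$. (Concretely, a multiple of a single Poincar\'e-series-style principal part already does the job once the order exceeds a bound controlled by $\dim S_{21/2}$.)

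\emph{Step 4 (Descent and conclusion).} The form $\Psi(f)$ is then a nonzero meromorphic section of $\mc L^{\otimes a}$ on $\Gamma\backslash\calD$ with $a=c_0(0)/2>0$ and divisor supported on finitely many Heegner divisors $y(\mu_i, n_i)$. Pulling back to $\M_{2d,n,\CC}$ identifies this with a section of $\lambda^{\otimes a}$ on the level cover whose divisor is supported on the preimages of finitely many $D_{\Lambda_{k_i}}$. Averaging (or taking a further power) over the deck group $\KK^{\mathrm{sp}}_n/\KK^{\mathrm{sp}}$ produces an invariant section that descends to $\M_{2d,\CC}$, giving the required equality $\lambda^{\otimes a}=\calO(D)$ in $\Pic(\M_{2d,\CC})$.

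\emph{Main obstacle.} The technical heart is Step 3: controlling the Weil representation $\rho_{L_{2d}}$ (whose dimension grows with $\disc L_{2d}$) well enough to guarantee a weakly holomorphic input with positive constant term. The identifications in Steps 1 and 4 (orbifold Picard versus Picard of the arithmetic quotient, weight normalization, and the dictionary between Heegner divisors and the $D_{\Lambda_k}$) are also somewhat delicate but standard once set up carefully.
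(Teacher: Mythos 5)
Your route (build an actual Borcherds product from a weakly holomorphic input of weight $1-19/2=-17/2$ and read off its weight and divisor, as in Theorem 13.3 of \cite{borch1}) is the dual of what the paper does: the paper never constructs a product, but instead uses the modularity of the generating series of Heegner divisors (Theorem 4.5 of \cite{borch2}, with McGraw's rationality) and a duality argument in the finite-dimensional space $\mathrm{Mod}(Mp_2(\ZZ),21/2,\rho^*)$. Your approach can be made to work, but Step 3 as written has a genuine gap, and it sits exactly where the content of the theorem lies. First, you must insist that the principal part of $f$ be supported on the indices $(n_k,\gamma_k)$ attached to the \emph{given} collection $\{\Lambda_k\}$; otherwise $\mathrm{div}\,\Psi(f)$ is a combination of arbitrary Heegner divisors and the theorem (say for the elliptic collection, where for $\gamma\equiv k$ only the single value $n_k=-k^2/4d$ is allowed, not the other classes $n\equiv -k^2/4d \bmod \ZZ$) does not follow. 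With that support constraint imposed, your dimension count only shows that the space of admissible principal parts orthogonal to $S_{21/2}(\rho^\vee)$ is nonzero once the order is large; it says nothing about the constant term. The constant term is not a free parameter: by the obstruction principle, once the principal part is orthogonal to the cusp forms, $c_0(0)$ is forced to be minus the pairing of the principal part with the weight-$21/2$ Eisenstein series, and nothing in a pure dimension count prevents this pairing from vanishing identically on the space of admissible principal parts. Ruling that out requires exactly the analytic input you only gesture at: at the allowed indices the Eisenstein-type coefficients are nonzero of size $\gg n^{9}$ while cusp-form coefficients are $O(n^{21/4+\epsilon})$, so no holomorphic form of weight $21/2$ with nonzero constant term can have vanishing coefficients at all of the infinitely many allowed indices. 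This is precisely Lemma \ref{cuspforms} in the paper, and it is what guarantees an admissible principal part with nonzero Eisenstein pairing, hence $c_0(0)\neq 0$, and after changing the overall sign (harmless, since the theorem does not ask $D$ to be effective) $c_0(0)>0$.

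Second, your parenthetical shortcut is false: a single term $c\,q^{-n}v_\gamma$ is an admissible principal part only if \emph{every} cusp form in $S_{21/2}(\rho^\vee)$ has vanishing $(n,\gamma)$-coefficient, which is not automatic for any single $n$, however large; one genuinely needs a linear combination over several allowed indices to annihilate the finitely many cusp obstructions, and then the coefficient estimate above to keep the Eisenstein pairing nonzero on that combination. Once Step 3 is repaired along these lines (and after clearing denominators to make the principal part integral and raising $\Psi(f)$ to a power killing the finite-order character), Steps 1, 2 and 4 are standard and give the statement; the paper's argument reaches the same relation in $\Pic$ more directly, trading the obstruction principle and the product construction for the modularity of $\Phi_{2d}$ plus linear algebra, but the essential estimate (Eisenstein growth versus the trivial cusp bound) is common to both.
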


The Hodge bundle has positive degree on any non-isotrivial family of K3 surfaces over a proper curve.   Therefore, we have the following
corollary which is a refinement of \cite{borcherdsludmil}:

\begin{cor}\label{elliptic-case}
Let $f: X \rightarrow C$ be a non-isotrivial family of quasipolarized K3 surfaces of degree $2d$ over a proper curve $C$.
There exists a point $t \in C$ and a lattice $\Lambda_{k}$ in our collection 
such that we have an embedding of lattices:
$$\Lambda_{k} \subset \Pic(X_t).$$
In particular,
any nonisotrivial family of K3 surfaces over a proper curve $C$
contains an elliptic K3 surface.
\end{cor}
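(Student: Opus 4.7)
The plan is to deduce the corollary from Theorem \ref{Picardjumping} by pulling back along the modular map of $f$ and using positivity of the Hodge bundle; the choice of lattice collection will simultaneously ensure that every point of the relevant divisors parameterizes an elliptic K3 surface.

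First I would specialize to the collection
\[
\Lambda_k = \begin{pmatrix} 2d & k \\ k & 0 \end{pmatrix}, \qquad k = 1, 2, \ldots,
\]
which are pairwise non-isomorphic (the discriminants $-k^2$ are distinct) with $\disc \Lambda_k < 0$. Crucially, the second basis vector $e$ is isotropic and pairs to $k>0$ with the polarization direction. Hence any geometric point of $D_{\Lambda_k}$ corresponds to a K3 surface $(X_t, L_t)$ whose Picard lattice contains a nonzero isotropic class $e$ with $e \cdot L_t > 0$. Riemann-Roch on $X_t$ makes $e$ or $-e$ effective, and positivity against the nef class $L_t$ selects $e$. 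After Weyl reflections in $(-2)$-curves, the resulting nef isotropic class induces an elliptic fibration on $X_t$, so it will suffice to locate a point of $\phi(C)$ in some $D_{\Lambda_k}$.

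Next I would write $\phi: C \to \M_{2d,\CC}$ for the moduli map, non-constant by non-isotriviality, and invoke the classical positivity of the Hodge bundle on a non-isotrivial proper family of K3 surfaces in characteristic zero (Griffiths positivity for the bottom step of the associated weight-two variation of Hodge structures) to conclude $\deg \phi^{*}\lambda > 0$. Theorem \ref{Picardjumping} then produces finitely many $k_1, \ldots, k_m$ and a Cartier divisor $D = D^{+} - D^{-}$ supported on $\bigcup_i D_{\Lambda_{k_i}}$ with $\lambda^{\otimes a} \simeq \calO(D)$. Either $\phi(C) \subset D_{\Lambda_{k_i}}$ for some $i$, in which case every fiber is elliptic, or $\phi^{*}D$ is a well-defined Cartier divisor on $C$ of positive degree $a \cdot \deg \phi^{*}\lambda$. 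In the latter case the inequality $\deg \phi^{*}D^{+} > \deg \phi^{*}D^{-} \geq 0$ forces $\phi^{*}D^{+}$ to be a nonzero effective divisor, producing $t \in C$ with $\phi(t) \in \mathrm{supp}(D^{+}) \subset \bigcup_i D_{\Lambda_{k_i}}$ and hence an embedding $\Lambda_{k_i} \hookrightarrow \Pic(X_t)$.

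The main obstacle is really just the clean invocation of positivity of $\phi^{*}\lambda$; one must know that the Hodge line is genuinely non-torsion along a non-isotrivial direction, which is a classical property of K3 VHS. Some care is also needed with the Deligne-Mumford structure on $\M_{2d,\CC}$, e.g.\ passing to a level-$n$ cover so that the universal family is schematic and divisor pullback is unambiguous, but this is routine in the setting of $\M_{2d, n}$.
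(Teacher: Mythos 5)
Your proposal is correct and follows essentially the same route as the paper: deduce $\deg\phi^{*}\lambda>0$ from classical Hodge-theoretic positivity, pull back the relation $\lambda^{\otimes a}=\calO(D)$ of Theorem \ref{Picardjumping} to force $\phi(C)$ to meet (or lie in) the support of $D$, and then specialize to the lattices $\Lambda_k=\left(\begin{smallmatrix}2d & k\\ k & 0\end{smallmatrix}\right)$ together with the standard fact that a nonzero isotropic class in $\Pic$ yields an elliptic fibration. You merely spell out details the paper leaves implicit (the $D^{+}-D^{-}$ dichotomy and the Riemann--Roch/Weyl-reflection step), which is fine.
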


To prove the second claim, we apply the first claim to the collection of lattices
$$\Lambda_{k} =  \left( \begin{array}{cc}
2d & k  \\
k & 0  \end{array} \right).$$
A K3 surface $X$ has the structure of an elliptic fibration if and only if there exists $L' \in \Pic(X)$ with self-intersection zero.

\begin{remark}
For elliptic lattices, it is possible to prove Corollary \ref{elliptic-case} directly along the lines of \cite{oguiso} using a density criterion
of Green on Noether-Lefschetz loci and the fact that rational isotropic vectors in $\LL\otimes \RR$ are dense in the space of real isotropic vectors.  However, we need the more precise ampleness claim of Theorem \ref{Picardjumping} in order to move to characteristic $p$ later on.
\end{remark}

The proof of Theorem \ref{Picardjumping} is an application of Borcherds' construction of automorphic forms for $O(2,n)$ \cite{borch1, borch2}.  We will recall this work in the first two subsections and explain how to apply it to our setting.

\subsection{Vector-valued modular forms}

For a more detailed discussion of the material in this subsection and the next, we refer the reader to the original papers \cite{borch1, borch2}.

We first recall standard definitions regarding modular forms of 
half-integral weight.  In order to make sense of the modular transformation 
law with half-integer exponents, 
a double cover of the standard modular group $SL_{2}(\ZZ)$ is required. 

\begin{defn}
The metaplectic group $Mp_{2}(\ZZ)$ consists of pairs
$$\left(\left(\begin{array}{cc} a & b\\
c & d\end{array}\right), \phi(\tau) = \pm\sqrt{c\tau+d}\right)$$
where $\left(\begin{array}{cc}a & b \\ c & d\end{array}\right) \in SL_{2}(\ZZ)$
and $\phi(\tau)$ is a choice of square root of the function $c\tau+d$ on the upper-half plane $\mathcal{H}$.
The group structure is defined by the product 
$$\left(A_{1},\phi_{1}(\tau)\right)\cdot\left(A_{2},\phi_{2}(\tau)\right) 
= \left(A_{1}A_{2}, \phi_{1}(A_{2}\tau)\phi_{2}(\tau)\right).$$  
\end{defn}
Here, we write $A\tau$ for the usual action of $SL_{2}(\RR)$ on $\tau\in\mathcal{H}$.
The metaplectic group is generated by the two elements
$$T = \left(\left(\begin{array}{cc} 1 & 1\\ 0 & 1\end{array}\right), 1\right),
S =  \left(\left(\begin{array}{cc} 0 & -1\\ 1 & 0\end{array}\right), \sqrt{\tau}\right),$$
where $\sqrt{\tau}$ denotes the choice of square root with positive real part.

Let $\rho: Mp_{2}(\ZZ)\rightarrow \End_{\CC}(V)$ be a finite-dimensional representation of the metaplectic group such that $\rho$ factors 
through a finite quotient.  
\begin{defn}
Given $k \in \frac{1}{2}\ZZ$,
a modular form of weight $k$ and type $\rho$ is a holomorphic function
$$f: \mathcal{H} \rightarrow V$$
such that, for all 
$g =\left(A, \phi(\tau)\right)\in Mp_{2}(\ZZ)$,
we have
$$f(A\tau) = \phi(\tau)^{2k}\cdot \rho(g)(f(\tau)).$$  For $k\in \ZZ$ and $\rho$ trivial, this 
reduces to the usual transformation rule.
\end{defn}

If we fix an eigenbasis $\{v_{\gamma}\}$ for $V$ 
with respect to $T$, we can take the Fourier expansion 
of each component of $f$ at the cusp at infinity.  That is, we write
$$f(\tau) = \sum_{\gamma} \sum_{k\in \ZZ} c_{k,\gamma} q^{k/R} v_{\gamma} \in V$$ 
where 
$$q = e^{2\pi i \tau}$$
and
$R$ is the smallest positive integer for which $T^{R}\in \mathrm{Ker}(\rho)$.    
The function $f$ is holomorphic at infinity if $c_{k,r} = 0$
for $k < 0$.  The space $$\mathrm{Mod}(Mp_{2}(\ZZ), k,\rho)$$ of holomorphic modular forms of 
weight $k$ and type $\rho$ is finite-dimensional.  If $c_{0,\gamma} = 0$ for all $\gamma$, we say
that $f$ is a cusp form.

Given an integral lattice $M$ with
an even bilinear form $(,)$ of signature{\footnote{Notice the signature here (following \cite{borch1}) differs from the
conventions in Section $2$ where we take the negative Poincar\'e pairing to match \cite{andre}.  This leads to some extra signs, but is otherwise harmless.}}
 $(2,n)$, we associate to $M$ the following unitary representation of $Mp_{2}(\ZZ)$.
Let $$M^{\vee} \subset M \otimes \QQ$$
denote the dual lattice and $M^{\vee}/M$ the finite quotient.  
The pairing extends linearly
to a $\QQ$-valued pairing on $M^{\vee}$.  The functions 
$\frac{1}{2}(\gamma, \gamma)$ and $(\gamma,\delta)$ descend to $\QQ/\ZZ$-valued
functions on $M^{\vee}/M$.

We can define a representation $\rho_{M}$ of $Mp_{2}(\ZZ)$ on the group algebra
$\CC[M^{\vee}/M]$ as follows, in terms of the action of the generators $T$ and $S$ with respect
to the standard basis $v_{\gamma}$ for  $\gamma \in M^{\vee}/M$,

\begin{align*}
\rho_{M}(T)v_{\gamma} &= e^{2\pi i\frac{(\gamma,\gamma)}{2}} v_{\gamma}\ ,\\
\rho_{M}(S)v_{\gamma} &= \frac{\sqrt{i}^{n-2}}{\sqrt{|M^{\vee}/M|}}
\sum_{\delta} e^{-2\pi i (\gamma,\delta)} v_{\delta}\ .
\end{align*}

We will apply all this to $M = L_{2d}$, equipped with the
bilinear form
$$(\gamma, \delta) = -\psi(\gamma,\delta);$$
we take the negative of the bilinear form considered in 
Example \ref{k3example} to match the signature conventions of \cite{borch1} and this section.
In this case, we have
$$M^{\vee}/M = \ZZ/2d\ZZ.$$
For the representation, we will take the dual representation $$\rho^* = \rho_{L_{2d}}^*.$$
We take the dual to match conventions in \cite{borch1}.  It follows from McGraw \cite{mcgraw} that the complex vector space
$\mathrm{Mod}(Mp_{2}(\ZZ), k,\rho^*)$ has a rational structure $\mathrm{Mod}(Mp_{2}(\ZZ), k,\rho^*)_{\QQ}$ given by modular forms with rational coefficients.

\subsection{Recap of Borcherds' work}

Recall the period domain
$$\Omega^{\pm} = \{\omega \in L_{2d}\otimes\CC| \psi(\omega, \omega) = 0, -\psi(\omega, \overline{\omega}) > 0 \}$$
and consider the arithmetic subgroup
$$\Gamma = \mathrm{Aut}(\LL, v_{2d})$$
of $O(V_{2d})$ acting on $\Omega^{\pm}$.

The analytic orbifold quotient $$[\Omega^{\pm}/\Gamma]$$ naturally has the structure of a smooth algebraic Deligne--Mumford stack by \cite{baily-borel}, and the period map defines a morphism
$$j: \M_{2d,\CC} \rightarrow [\Omega^{\pm}/\Gamma]$$
which is an open immersion on the polarized locus.

For every $$n\in\QQ^{<0},\ \ \gamma \in L_{2d}^{\vee}/L_{2d} = \ZZ/2d\ZZ,$$ we associate a divisor 
$y_{n,\gamma}$ on $[\Omega^{\pm}/\Gamma]$
as follows.
Given an element $v \in L_{2d}^{\vee}$, there is an associated hyperplane
$$v^{\perp} = \left\{ \omega \in\Omega^{\pm}\ |\ \psi(\omega,v) = 0\right\}\subset \Omega^{\pm}.$$
Both $\psi(v,v)$ and the residue class $v \bmod L_{2d}$ are
invariant under the action of $\Gamma$.  Therefore, if we fix $n$ and $\gamma$ as above, the set of $v \in L_{2d}^{\vee}$ with 
$$\psi(v,v) = -n,\ \  v \equiv \gamma \bmod L_{2d}$$
is also $\Gamma$-invariant.  
The union over the set of the associated hyperplanes
$$\sum_{\shortstack{$\psi(v,v) = -n$ \\$v \equiv \gamma \bmod L_{2d}$}} v^{\perp}$$
is $\Gamma$-invariant and descends to an algebraic divisor
$$y_{n,\gamma}  = \left(\sum_{\psi(v,v)=-n,\ v\equiv \gamma \bmod L_{2d}} v^{\perp}\right)/\Gamma.$$
The $y_{n,\gamma}$ are the {\em Heegner divisors} of $[\Omega^{\pm}/\Gamma]$; let $[y_{n,\gamma}]$ denote the associated line bundle.
Because of the symmetry $v^{\perp} = (-v)^{\perp}$,
there is a redundancy $y_{n,\gamma} = y_{n,-\gamma}$ in our notation.
It follows from these definitions that $y_{n,\gamma} = 0$ unless
\begin{equation}\label{vanishing}
-n \equiv \frac{1}{4d} \gamma^2 \bmod \ZZ.
\end{equation}

In the degenerate case where $n = 0$, we have the following prescription.
The line bundle $\mathcal{O}(-1)$ on $\Omega^{\pm} \subset \PP(L_{2d}\otimes \CC)$
admits a natural $\Gamma$ action and therefore descends to a line bundle $K$ on 
$[\Omega^{\pm}/\Gamma]$.   If $n = 0$ and $\gamma = 0$, we set
$$[y_{0,0}] = K^{*}.$$
If $n=0$ and $\gamma \neq 0$, we set $y_{n,\gamma} = 0$.

Given a rank $2$ lattice
$$\Lambda =  \left( \begin{array}{cc}
2d & a  \\
a & 2b  \end{array} \right),$$
we can associate the discrete invariants
$$n = \frac{\mathrm{disc}\Lambda}{4d} = b - \frac{a^2}{4d};\ \ \gamma \equiv a \bmod 2d$$
and the associated Heegner divisor $y_{n,\gamma}$.
It is clear from definitions that
$j^{*}(y_{n,k})$ is a divisor with support $D_{\Lambda}$, but possibly with multiplicities.
Similarly, in the degenerate case, we have 
$$j^*([y_{0,0}]) = -\lambda.$$

We can place the Heegner divisors 
in a formal power series $\Phi_{2d}(q)$ with coefficients in
$\mathrm{Pic}([\Omega^{\pm}/\Gamma])\otimes \QQ[L_{2d}^{\vee}/L_{2d}]$. We can define the generating function 
$$\Phi(q) = \sum_{n\in \QQ^{\geq 0}}\sum_{\gamma \in \ZZ/2d\ZZ} [y_{-n,\gamma}] q^{n} v_{\gamma} \in \Pic([\Omega^{\pm}/\Gamma])[[q^{1/4d}]]\otimes \QQ[L_{2d}^{\vee}/L_{2d}].$$

The following proposition is Theorem $4.5$ of \cite{borch2} together with the refinement of \cite{mcgraw}:

\begin{prop}\label{borcherds-mainresult}
The generating function $\Phi_{2d}(q)$ is an element of
$$\Pic([\Omega^{\pm}/\Gamma])\otimes \mathrm{Mod}(Mp_{2}(\ZZ), 21/2, \rho^*)_{\QQ}.$$
\end{prop}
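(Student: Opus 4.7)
My proposal is to deduce Proposition \ref{borcherds-mainresult} by combining two ingredients: Borcherds' singular theta correspondence \cite{borch1}, which constructs automorphic forms with divisors supported on Heegner divisors, and a Serre-duality converse theorem characterizing modular forms of weight $21/2$ and type $\rho^*$ as exactly those formal $q$-expansions that pair trivially with the principal parts of weakly holomorphic modular forms of the complementary weight $1 - 19/2 = -17/2$ and type $\rho = \rho_{L_{2d}}$. The theta lift will produce linear relations in $\Pic([\Omega^\pm/\Gamma])\otimes\QQ$ among the Heegner classes $[y_{n,\gamma}]$ and the class $[K]$; the converse theorem then repackages these relations as the modularity of the generating series $\Phi_{2d}(q)$.

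Concretely, for each weakly holomorphic modular form $f = \sum_{m,\gamma} c_{m,\gamma} q^m v_\gamma$ of weight $-17/2$ and type $\rho$ with integral principal part $\{c_{m,\gamma}\}_{m<0}$, the theta lift of \cite{borch1} produces a meromorphic automorphic form $\Psi(f)$ on $\Omega^\pm/\Gamma$ of weight $c_{0,0}/2$, whose divisor is
$$\mathrm{div}\,\Psi(f) \;=\; \tfrac{1}{2}\sum_{m<0,\,\gamma} c_{m,\gamma}\,y_{m,\gamma}.$$
Reading this as an identity in $\Pic([\Omega^\pm/\Gamma])\otimes\QQ$ and substituting $[y_{0,0}] = -[K]$ for the $m=0$ contribution yields the relation
$$\sum_{m\le 0,\,\gamma} c_{m,\gamma}\,[y_{m,\gamma}] \;=\; 0,$$
which is precisely the vanishing of the coefficient-wise pairing of $\Phi_{2d}(q)$ with the principal part of $f$. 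Invoking the converse theorem (Theorem 4.5 of \cite{borch2}) with this entire family of vanishings then forces $\Phi_{2d}$ to be a modular form of weight $21/2$ and type $\rho^*$ with values in $\Pic([\Omega^\pm/\Gamma])\otimes\CC$.

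The main obstacle, and the reason McGraw's refinement \cite{mcgraw} is needed, is the rationality assertion: I must show that $\Phi_{2d}$ lies in the $\QQ$-structure $\mathrm{Mod}(Mp_2(\ZZ),21/2,\rho^*)_\QQ$ rather than only in its complexification. The coefficients of $\Phi_{2d}$ already lie in $\Pic([\Omega^\pm/\Gamma])\otimes\QQ$ by construction, so the key remaining point is that the duality pairing itself can be set up over $\QQ$, i.e., that $\QQ$-rational weakly holomorphic test forms $f$ already cut out the $\QQ$-structure on the weight $21/2$ side. McGraw's theorem supplies exactly this: a weakly holomorphic form with $\QQ$-rational principal part automatically has $\QQ$-rational $q$-expansion, so $\QQ$-rational principal parts determine the $\QQ$-structure on the space of holomorphic modular forms of the dual weight and type. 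Combining this refinement with the theta-lift relations above completes the argument.
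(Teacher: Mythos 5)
Your proposal is correct and amounts to the same approach as the paper: the paper simply invokes Theorem 4.5 of \cite{borch2} together with McGraw's rationality theorem \cite{mcgraw}, and what you have written out is precisely the proof of that cited theorem (Borcherds products from \cite{borch1} producing relations among Heegner classes, fed into the Serre-duality converse theorem, with McGraw handling the $\QQ$-structure). The only slip is attributional: the converse theorem you invoke is Theorem 3.1 of \cite{borch2}, while Theorem 4.5 is the modularity statement itself; also note your rationality step implicitly uses that the weight $-17/2$ is negative, so a weakly holomorphic form there is determined by its principal part.
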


In particular, given any linear functional
$$\lambda: \mathrm{Pic}([\Omega^{\pm}/\Gamma]))\otimes\QQ \rightarrow \QQ,$$
the contraction $\lambda(\Phi_{2d}(q))$ is the Fourier expansion of a vector-valued
modular form of weight 
$21/2$ and type $\rho^{\ast}$, so we have a map
$$\beta: \left(\mathrm{Pic}([\Omega^{\pm}/\Gamma]))\otimes\QQ\right)^* \rightarrow  \mathrm{Mod}(Mp_{2}(\ZZ), 21/2, \rho^*)_{\QQ}.$$

\subsection{Proof of Theorem \ref{Picardjumping}}

With this background in place, it is easy to explain the proof.  
It follows from equation \eqref{vanishing} that every modular form in the image of $\beta$ has the following vanishing property:
\begin{equation}\label{vanishing2}
c_{n,\gamma} = 0 \mbox{  unless  } n \equiv \frac{1}{4d}\gamma^2 \bmod \ZZ;\ \ c_{0,\gamma} = 0\mbox{ if }\gamma\ne 0.
\end{equation}
Let $$\mathrm{VMod}(Mp_{2}(\ZZ), 21/2, \rho^*)_{\QQ}$$ denote the rational subspace of modular forms satisfying this vanishing condition.
The key point is the following elementary lemma about vector-valued modular forms in this subspace.

\begin{lem}\label{cuspforms}
Suppose we have 
$$f(q) = \sum_{n\in \QQ_{\geq 0},\gamma} c_{n,\gamma} q^{n} v_{\gamma} \in  \mathrm{VMod}(Mp_{2}(\ZZ), 21/2, \rho^*)$$
such that
$$c_{0,0} \ne 0.$$
Then, for each $\gamma \in \ZZ/2d\ZZ$, 
$$c_{n,\gamma} \ne 0$$ for all $n\in \QQ_{\geq 0}$ sufficiently large
such that
$$n \equiv \frac{1}{4d}\gamma^2 \bmod \ZZ.$$
\end{lem}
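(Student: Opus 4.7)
The standard strategy is to decompose $f$ as an Eisenstein part plus a cusp form, and exploit the fact that the Fourier coefficients of the two pieces have different orders of magnitude in $n$. I will first argue that there is an orthogonal decomposition
\[
\mathrm{VMod}(Mp_2(\ZZ), 21/2, \rho^*) \;=\; \mathrm{Eis}\;\oplus\;\mathrm{Cusp},
\]
of the space of vector-valued modular forms of weight $21/2$ satisfying the vanishing condition \eqref{vanishing2}, and reduce the lemma to understanding a single Eisenstein series.

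The key step is to construct an Eisenstein series
\[
E(q) \;=\; \sum_{n\geq 0,\gamma} e_{n,\gamma}\, q^n v_\gamma \;\in\; \mathrm{VMod}(Mp_2(\ZZ), 21/2, \rho^*)
\]
with constant term $e_{0,0}\neq 0$ such that, for each $\gamma \in \ZZ/2d\ZZ$ and every sufficiently large $n \in \QQ_{\geq 0}$ with $n \equiv \tfrac{1}{4d}\gamma^2 \bmod \ZZ$, one has a lower bound of the form $e_{n,\gamma} \geq C n^{19/2}\cdot (\text{divisor-like factor})$. Such Eisenstein series for the Weil representation $\rho^*$ attached to an even lattice are constructed explicitly by Bruinier and Kuss; their Fourier coefficients are given by products of special $L$-values and local representation densities of the quadratic form, all of which are nonvanishing and grow at the predicted rate. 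The vanishing property \eqref{vanishing2} is automatic from the definition because nonzero Fourier coefficients of $E$ occur only in the correct congruence class. This gives the desired $E$.

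Granted $E$, write $f = c\cdot E + g$ where $c = c_{0,0}(f)/e_{0,0} \neq 0$ and $g$ is a cusp form in $\mathrm{VMod}(Mp_2(\ZZ), 21/2, \rho^*)$ (cusp since the constant term of $f - cE$ vanishes in every component by the vanishing condition, which forces $c_{0,\gamma} = 0$ for $\gamma \neq 0$). The Hecke bound for holomorphic cusp forms of weight $k$, applied in the vector-valued setting, yields
\[
|c_{n,\gamma}(g)| \;=\; O\bigl(n^{k/2}\bigr) \;=\; O\bigl(n^{21/4}\bigr).
\]
Combining, we obtain
\[
c_{n,\gamma}(f) \;=\; c\cdot e_{n,\gamma} \;+\; O\bigl(n^{21/4}\bigr).
\]
Since $19/2 > 21/4$, for each fixed $\gamma$ the Eisenstein main term dominates the cuspidal error as $n \to \infty$ along the arithmetic progression $n \equiv \tfrac{1}{4d}\gamma^2 \bmod \ZZ$, which gives $c_{n,\gamma}(f) \neq 0$ for all sufficiently large such $n$.

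The main obstacle is the construction and control of the Eisenstein series in the second step: one needs not only existence of $E \in \mathrm{VMod}$ with $e_{0,0}\neq 0$, but effective nonvanishing and lower bounds on every Fourier coefficient $e_{n,\gamma}$ along the relevant progression. This is where the explicit Bruinier--Kuss formula (equivalently, the unfolding of the Eisenstein series against the Weil representation) is essential; everything else is soft.
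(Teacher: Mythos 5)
Your argument is correct in outline and shares the paper's skeleton (write $f$ as a nonzero multiple of an auxiliary form in $\mathrm{VMod}$ with nonvanishing constant term plus a cusp form, then beat the Hecke bound $O(n^{21/4})$ by a main term of strictly larger polynomial growth along each admissible progression), but your choice of main term differs from the paper's in exactly the step you flag as the obstacle. You propose the weight-$21/2$ Eisenstein series for the Weil representation and invoke the Bruinier--Kuss coefficient formula to get nonvanishing and a lower bound of order $n^{19/2}$; this does work for $L_{2d}$, since the lattice has rank $21\geq 5$, so the local representation densities are positive and bounded below along the progression $n\equiv\frac{1}{4d}\gamma^{2}\bmod\ZZ$, and the constant term of that Eisenstein series is supported on the $\gamma=0$ component, so it lies in $\mathrm{VMod}$ -- but it imports special $L$-values and local density estimates. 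The paper instead takes the elementary product $E_{10}(q)\cdot\theta_{2d}(q)$, where $\theta_{2d}$ is the Siegel theta function of weight $1/2$: this visibly satisfies the vanishing condition \eqref{vanishing2}, its constant term is supported at $\gamma=0$, and its coefficient of $q^{n}v_{\gamma}$ along the admissible progression is a sum of terms $-264\,\sigma_{9}(m)$ of a single sign (plus at most one $+1$), hence trivially nonzero and of magnitude at least roughly $264(n-d/4)^{9}$, which already dominates $n^{21/4}$. So the two routes buy different things: yours gives the sharper exponent $19/2$ but requires the nontrivial nonvanishing input from Bruinier--Kuss; the paper's gives only exponent $9$, which suffices, and reduces the whole nonvanishing question to positivity of divisor sums, keeping the lemma self-contained.
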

\begin{proof}

Let $\theta_{2d}(q) \in \mathrm{Mod}(Mp_{2}(\ZZ), 1/2, \rho^*)$ denote the Siegel theta function, the vector-valued modular form of weight $1/2$ whose Fourier expansion is
$$\theta_{2d}(q) = \sum_{\gamma =0}^{2d-1}\sum_{s} q^{\frac{(2ds + \gamma)^{2}}{4d}}v_{\gamma} \in \mathrm{Mod}(Mp_{2}(\ZZ), 1/2, \rho^*).$$
If we take the Eisenstein series 
$$E_{10}(q) = 1 - 264\sum_{n\in \ZZ_{+}} \sigma_{9}(n)q^{n},$$
then
$$E_{10}(q)\cdot \theta_{2d}(q) \in \mathrm{VMod}(Mp_{2}(\ZZ), 21/2, \rho^*)_{\QQ},$$
i.e., it satisfies the same vanishing conditions as $f$.

Furthermore, since its only nonzero constant term is when $\gamma = 0$, we see that
$$f(q) - c_{0,0}\cdot E_{10}(q)\cdot \theta_{2d}(q)$$
is a cusp form.  In particular, each component is a cusp form in the usual sense.

As is well-known (see Corollary $2.1.6$ of \cite{miyake} or Proposition $1.3.5$ of \cite{sarnak} for the half-integral weight case), there is a trivial bound on the growth of Fourier coefficients of cusp forms:
$$|c_{n,\gamma}| < n^{\mathrm{wt}/2+ \epsilon} = n^{21/4+\epsilon}$$ 
for each $\gamma$ and $n$ sufficiently large.

Given $n$ and $\gamma \in \ZZ/2d\ZZ$ such that 
$$n \equiv \frac{1}{4d}\gamma^2 \bmod \ZZ,$$
The corresponding coefficient of  $E_{10}(q)\cdot \theta_{2d}(q)$ is nonzero and has magnitude bounded from below by 
$264(n- d/4)^{9}$.  Therefore, this contribution to $f(q)$ dominates the cusp form term for $n>>0$, so $c_{n,\gamma}$ is nonzero.
\end{proof}

We now prove Theorem \ref{Picardjumping}.

\begin{proof}
Given $\Lambda_k$ in our collection of rank $2$ lattices, 
let $$y_{k} = y_{n_{k},\gamma_{k}}$$ denote the associated Heegner divisors.

It suffices to show that we have a linear dependence of Heegner divisors
$$a [y_{0,0}] + \sum_{i=1}^{m} c_{i} [y_{k_{i}}] = 0 \in \Pic([\Omega^{\pm}/\Gamma])$$
with $a > 0$.  Indeed, if we pull back this dependence via the period map, we get the claim of the proposition.

To see this, let 
$$H = \mathrm{Span}\{[y_{k}]\} \subset \Pic([\Omega^{\pm}/\Gamma])\otimes\QQ$$
denote the (finite-dimensional) linear span of our collection of Heegner divisors. Then we want to show that
$$[y_{0,0}] \in H.$$

We can rephrase the statement of Proposition \ref{borcherds-mainresult} as saying that we have a diagram:
\begin{equation*}
\bigoplus \QQ e_{n,\gamma} \stackrel{\alpha}{\longrightarrow} \mathrm{VMod}(Mp_{2}(\ZZ), 1/2, \rho^*)_{\QQ}^{\ast} \stackrel{\beta^*}{\longrightarrow} \Pic([\Omega^{\pm}/\Gamma])\otimes\QQ.
\end{equation*}
Here, the first map $\alpha$ sends
the basis vector
$e_{n,\gamma}$ to the linear functional
$$f(q) = \sum c_{n,\gamma}q^{n}v_{\gamma} \mapsto c_{n,\gamma},$$
and the second map $\beta^*$
is the dual of the map $\beta$ defined in the last section.
By construction, we have the composition 
$$\beta^*\circ\alpha: e_{n,\gamma} \mapsto [y_{n,\gamma}].$$

Since $\Lambda_{k}$ are pairwise non-isomorphic, we know that $n_{k} \rightarrow -\infty$ as $k \rightarrow \infty$.  Therefore, Lemma \ref{cuspforms} implies that given a modular form 
$$f(q) \in \mathrm{VMod}(Mp_{2}(\ZZ), 1/2, \rho^*) = \mathrm{VMod}(Mp_{2}(\ZZ), 1/2, \rho^*)^{**}$$
that vanishes on $\mathrm{Span}(\alpha(e_{n_{k},\gamma_{k}}))$,
it must vanish on $\alpha(e_{0,0})$ as well.  Therefore, we must have
$$\alpha(e_{0,0}) \in \mathrm{Span}(\alpha(e_{n_{k},\gamma_{k}}))$$
By applying $\beta^*$, we have that
$y_{0,0}\in H$.
\end{proof}

\section{Degenerations of supersingular K3 surfaces}\label{section-ss}

In this section, we study one-parameter degenerations of supersingular K3 surfaces.

Let $k$ be an algebraically closed field of characteristic $p$.  Let $K$ be a discrete valuation field
with residue field $k$, and let $\calO_K$ be its valuation ring, which we assume to have characteristic $p$ as well.
Set
$$\Delta = \Spec \calO_K.$$
Suppose we are given a supersingular K3 surface
$$f: X \rightarrow \Spec K$$
equipped with a very ample bundle $L$ of degree $2d$, such that $p >2d+4$.

The main result of this section is the following proposition.

\begin{thm}\label{ss-degeneration}
After possibly taking a finite, separable base change,
$f$ extends to a family of K3 surfaces
$$g: \calX \rightarrow \Delta.$$
so that $L$ extends to a quasipolarization $L'$.
\end{thm}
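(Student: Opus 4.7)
The plan is to prove Theorem \ref{ss-degeneration} in three stages: producing a semistable model, ruling out non-smooth special fibers via $p$-adic slope considerations, and extending the polarization. The first stage invokes the characteristic-$p$ Kulikov--Pinkham--Persson (KPP) classification established earlier in this section (this is where the hypothesis $p > 2d+4$ enters). After possibly taking a finite separable base change, I may extend $f: X \to \Spec K$ to a proper flat family $g: \calX \to \Delta$ whose special fiber $\calX_0$ is of one of three types: Type I (a smooth K3 surface), Type II (a chain with K3 end components and elliptic ruled interior components, meeting along smooth elliptic curves), or Type III (a combinatorial $2$-sphere of rational surfaces meeting along rational curves). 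The remainder of the argument is devoted to ruling out Types II and III.

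For the second stage, I would exploit supersingularity via log crystalline cohomology. Attached to the semistable model $\calX/\Delta$ is a log crystalline cohomology group $H^2_{\log\text{-}\cris}(\calX_0)$, equipped with a Frobenius $\phi$ and a nilpotent monodromy operator $N$ satisfying the twisted commutation $N\phi = p\phi N$. A Hyodo--Kato-style comparison identifies this group, as an $F$-isocrystal, with $H^2_{\cris}$ of the generic fiber (after base change to the appropriate base). Supersingularity of $X$ then translates to the statement that $H^2_{\log\text{-}\cris}(\calX_0)$ is pure of Newton slope $1$. The commutation relation forces $N$ to carry the slope-$1$ part into the slope-$0$ part, which is zero by purity; hence $N = 0$. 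But $N \ne 0$ in Types II and III (in fact, the rank and index of nilpotence of $N$ encode the combinatorial complexity of $\calX_0$, cf. the Clemens--Schmid picture). So $\calX_0$ must be of Type I, and $g$ is a smooth family of K3 surfaces.

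The third stage is the extension of $L$. Because $\calX_0$ is a principal divisor on the regular scheme $\calX$, restriction gives an isomorphism $\Pic(\calX) \simrightarrow \Pic(X)$, so $L$ extends uniquely to a line bundle $L'$ on $\calX$. Flatness yields $(L'|_{\calX_0})^2 = 2d > 0$, so $L'|_{\calX_0}$ is big on the smooth K3 surface $\calX_0$. A limit argument, using that $L'|_{\calX_t}$ is ample for all $t \ne 0$ together with continuity of intersection numbers against effective curves, shows that $L'|_{\calX_0}$ lies in the closure of the ample cone and is therefore nef; hence $L'$ is a quasipolarization as claimed. The main obstacle is clearly the second stage: setting up the log crystalline formalism in the equicharacteristic-$p$ setting, verifying the comparison with the generic fiber so that pure slope $1$ is preserved, and then checking cleanly that this purity is incompatible with the nontrivial monodromy $N$ forced by Types II and III.
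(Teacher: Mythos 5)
Your proposal has two genuine gaps, and they concern precisely the parts of the argument where the paper does its real work. First, stage 1 assumes a characteristic-$p$ KPP classification ``established earlier in this section,'' but Theorem \ref{ss-degeneration} is the first result of the section and the KPP-type statement is only derived \emph{inside} its proof; there is nothing prior to invoke. Producing a semistable model at all is most of the content: the paper takes a generic pencil in $|L|$ (Lemma \ref{genericpencil}), applies T.~Saito's semistable reduction for fibrations of curves \cite{saito} --- this, via $p>2g+2=2d+4$, is where the hypothesis on $p$ actually enters, not through the classification of special fibers --- and then runs Kawamata's semistable MMP over the DVR \cite{kawamata}, tracking an ample $\QQ$-divisor through the divisorial contractions and flips by MMP with scaling (Lemma \ref{scaling}). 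Even then the minimal model may have rational double points along the special fiber, the combinatorial-K3 statement only holds after a further base change and a small modification which is merely an algebraic space, and the total space need not be regular. So you are assuming, rather than proving, the existence of a regular model with KPP special fiber compatible with $L$.

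Second, the nefness argument in stage 3 fails as stated: limits of ample classes need not be nef, because a $(-2)$-curve $C$ in the special fiber need not deform to nearby fibers, so ``continuity of intersection numbers against effective curves'' gives no control on $L'\cdot C$. This is exactly the flop/non-separatedness phenomenon for quasipolarized K3 families noted in Section 2: two smooth models of the same punctured-disk family can differ by a flop, and on one of them the limit of the relative polarization has negative degree on the flopped curve. The paper resolves this by carrying the nef $\QQ$-divisor $H$ produced by MMP with scaling down to the minimal model and showing $H\sim_{\QQ}\gamma L'$ with $\gamma>0$, whence $L'$ is nef; it also needs a separate argument (Artin's constancy of the Picard rank in connected supersingular families \cite{artin-ss}, plus $\QQ$-factoriality) to exclude exceptional curves coming from double-point fibers, a step your idealized setup silently bypasses. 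For stage 2, your slope-purity/$N=0$ argument is a plausible alternative in spirit, but it requires an equicharacteristic-$p$ Hyodo--Kato-type comparison that is not off the shelf (as you acknowledge); the paper instead quotes Rudakov--Zink--Shafarevich \cite{rsz}, who show that type II and III combinatorial K3 surfaces have formal Brauer group of height at most $2$, and uses that the height can only jump under specialization, which avoids that machinery entirely.
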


By a finite separable base change, we mean a finite extension of (discrete valuation) fields $K \rightarrow K'$, so that our family extends smoothly over $\Delta' = \Spec \calO_{K'}$.  Since we will be making a series of such extensions, we will abuse notation and write $\Delta$ at each step.

Supersingular K3 surfaces are closed with respect to specialization by Corollary $1.3$ of \cite{artin-ss}, so $X'$ has supersingular central fiber as well.

It seems that the principle that supersingular $K3$ surfaces should not degenerate was first applied by Rudakov--Zink--Shafarevich \cite{rsz}, who prove this for polarizations of degree $2$ and deduce Artin's conjecture in this case as a consequence.  Their argument can be extended using more recent techniques in birational geometry as follows:

\begin{enumerate}
\item After blowing up $X$ and making a separable base change, we construct a semistable model $\calY$ over $\Delta$, using a semistable reduction result \cite{saito} of T. Saito for iterated fibrations of curves.  This step is where the bound on $p$ is required.

\item We replace $\calY$ with $\calX$, a minimal model with trivial relative canonical bundle, by running semistable MMP for surfaces over a DVR, proved by Kawamata \cite{kawamata} in mixed and finite characteristic.  In particular, if we restrict to $\Spec K$, we recover the generic fiber $X$.  Using an argument of Corti \cite{corti} over $\CC$, the possible central fibers are classified by Kulikov, Pinkham-Persson \cite{kulikov, pinkham-persson}.

\item Finally, using $\QQ$-factoriality of $\calX$ and the argument of \cite{rsz}, supersingularity of the generic fiber forces $g$ to be smooth.

\end{enumerate}

The argument is somewhat complicated by requiring the extension of $L$ to be a quasipolarization.  In particular, we need to make sure that the steps in the theorems of Saito and Kawamata can be arranged to preserve $L$.   In the first case, this is straightforward.  For Kawamata's paper, this is a little subtle due to flipping contractions; fortunately, we can apply MMP \emph{with scaling}, a standard refinement of the usual approach, which allows us to follow $L$ through these rational maps.  We are grateful to A. Corti for a discussion of these issues.

In this section, we will work with Weil divisors considered up to linear equivalence.
It will also be convenient to work $\QQ$-divisors, i.e. rational linear combinations of Weil divisors, and consider these up to $\QQ$-linear-equivalence, which we denote by $\sim_{\QQ}$.  For the most part, we will work with $\QQ$-factorial schemes, so all $\QQ$-divisors will be $\QQ$-Cartier.

Since the Picard group $\Pic(X_{\overline{K}})$ is finitely generated, we can assume, by taking a finite separable base change, that every line bundle on $X_{\overline{K}}$ is defined over $K$.

\subsection{Construction of a semistable model}\label{saito-section}

We first find a semistable model for $X$ after blowup and base extension.

We begin with the following lemma:
\begin{lem}\label{genericpencil}
There exists a pencil of curves on $X$ (defined over $K$) for the linear system $|L|$ associated to $L$
\begin{equation*}
\xymatrix{
Y \ar[r]^{\tau}\ar[d]^{\pi} & X\\
\PP^1_{K} & 
}
\end{equation*}
such that
$Y$ is smooth and the closed fibers of $\pi$ are irreducible, nodal curves of genus $g=d+1$.
\end{lem}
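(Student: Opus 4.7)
The plan is to construct the pencil via a Bertini-type argument on the projective embedding provided by $|L|$, combined with the standard Lefschetz-pencil formalism in positive characteristic.

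First, Riemann-Roch on the K3 surface gives $h^0(X,L) = 2 + L^2/2 = d+2$ and $h^i(X,L) = 0$ for $i > 0$, so the very ample system $|L|$ embeds $X$ as a smooth projective surface of degree $2d$ in $\PP^{d+1}_K$. Members of $|L|$ correspond bijectively to hyperplane sections of $X$, and a pencil in $|L|$ corresponds to a codimension-two linear subspace $\Lambda \subset \PP^{d+1}_K$; the members of the pencil are the hyperplane sections of $X$ containing $\Lambda$.

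Second, I would verify that a suitably general pencil $\Lambda$ produces a Lefschetz pencil on $X$, with the following properties:
\begin{enumerate}
\item[(i)] the base locus $\Lambda \cap X$ is a set of $L^2 = 2d$ distinct reduced points;
\item[(ii)] the generic fiber is smooth, and each singular fiber has a single ordinary node as its only singularity;
\item[(iii)] every closed fiber is irreducible.
\end{enumerate}
Assertions (i) and (ii) are standard features of Lefschetz pencils. For (i), a general codimension-two subspace meets $X$ transversely by Bertini. For (ii), a general line in $(\PP^{d+1})^\vee$ meets the dual hypersurface $X^\vee$ transversally at smooth points corresponding to hyperplane sections with a single ordinary node, avoiding the higher-singularity strata of codimension $\geq 2$ in $X^\vee$. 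The existence of Lefschetz pencils in positive characteristic for sufficiently ample embeddings of smooth surfaces is treated in SGA 7, Exposé XVII; the assumption $p > 2d+4$ is more than adequate to rule out the usual characteristic-$p$ pathologies such as inseparable tangent hyperplanes that would give cuspidal rather than nodal singularities.

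For assertion (iii), the reducible locus $R \subset |L|$ is the union over effective decompositions $L = A+B$ of loci of the form $|A| \times |B|$, each of dimension $h^0(A) + h^0(B) - 2$. Using Riemann-Roch on $X$ together with very-ampleness of $L$ (which forces $A\cdot L, B\cdot L \geq 1$), one computes that each such stratum has codimension $A \cdot B - 1 - h^1(A) - h^1(B)$ in $|L| \cong \PP^{d+1}$. A general line in $|L|$ automatically misses strata of codimension $\geq 2$. The only codimension-one possibilities come from decompositions with $A\cdot B = 2$ (and $h^1 = 0$), which correspond to a handful of specific geometric configurations on $X$ (e.g.\ elliptic classes $A$ with $A\cdot L = 2$); these can be avoided by choosing the pencil in a suitable dense open subvariety of the Grassmannian of pencils.

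Third, with such a pencil chosen, let $\tau: Y \to X$ be the blow-up of the $2d$ distinct reduced base points. Then $Y$ is a smooth projective surface, $\tau$ is an isomorphism away from the base locus, and the rational map $X \dashrightarrow \PP^1$ induced by the pencil extends to a morphism $\pi: Y \to \PP^1$. Each closed fiber is the proper transform of an irreducible curve in the pencil with at most nodal singularities, and by adjunction on the K3 surface each has arithmetic genus $g = 1 + L^2/2 = d+1$, as required.

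I expect the main obstacle to be step (iii): controlling the codimension-one components of the reducible locus depends on the Picard lattice of $X$ and requires a nonuniform argument for the finitely many exceptional decompositions. The bound $p > 2d+4$ does not enter directly here, but secures the Lefschetz-pencil step by keeping the characteristic far from the degree of the embedding.
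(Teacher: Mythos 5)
There are two genuine gaps, and they sit exactly at the two places where the lemma has real content in characteristic $p$.

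First, your step (ii) assumes the hardest point rather than proving it. SGA 7, Exp.\ XVII does \emph{not} show that a smooth surface embedded by a given very ample system admits a Lefschetz pencil in positive characteristic; its existence theorem requires composing with a Veronese re-embedding of degree $\geq 2$, which is not available here because the pencil must lie in $|L|$ itself (otherwise the fibers do not have genus $d+1$). The danger is precisely that the Gauss map $Z(X)\rightarrow X^{\vee}$ could be inseparable or non-birational, in which case the generic tangent hyperplane does not cut out a uninodal curve, no matter how the pencil is chosen. This is where $p>2d+4$ must actually be used: one computes via the Pl\"ucker-type formula that $\deg \pr^*(h)^{g-1}\cap[Z(X)]=6d+24=6(d+4)$, so $p>2d+4$ forces $p\nmid \deg(\pr)$, hence $\pr:Z(X)\rightarrow X^{\vee}$ is generically \'etale; by Holme's reflexivity criterion $X$ is then reflexive, $\pr$ is birational, and the singular locus of a generic tangent hyperplane section is a single node. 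Saying the bound is ``more than adequate'' with a citation that does not cover this situation leaves the central characteristic-$p$ issue unaddressed.

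Second, your treatment of irreducibility fails as stated: if the reducible locus had a codimension-one component in $|L|\cong\PP^{d+1}$, then \emph{every} line would meet it, so such components cannot be ``avoided by choosing the pencil in a suitable dense open subvariety of the Grassmannian''; they must be shown not to exist. The dimension count you set up cannot rule them out on its own (on a supersingular surface the Picard lattice is large, and classes with small $A\cdot B$ abound), and you yourself flag this as the main obstacle without resolving it. The efficient route is the one forced by step (ii): any non-irreducible member occurring in a pencil transverse to $X^{\vee}$ at points of the uninodal locus would be a union of two smooth connected curves $C_1+C_2$ meeting transversally in one point; then the Hodge index theorem gives $(C_1^2)(C_2^2)<1$, so some $C_i$ has $C_i^2\in\{-2,0\}$ and $L\cdot C_i=C_i^2+1\in\{-1,1\}$, contradicting very ampleness of $L$. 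Without an argument of this kind (or an equally effective replacement), assertion (iii) — and hence the lemma — is not established by your proposal.
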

Since $\pi$ is a pencil of curves, $\tau$ is a proper birational map.

\begin{proof}

It suffices to pass to the algebraic closure $\overline{K}$ and prove the statement for a generic pencil, since we can then always find one defined over $K$.  For convenience, we suppress the subscript $\overline{K}$.

We first show that a generic pencil has smooth total space with at worst nodal fibers.
Since $L$ is very ample, we have an embedding
$$X \subset \PP^{g},$$
where we have used Riemann-Roch to determine
$h^0(L) = d+2 = g+ 1.$
Let $\PP^{\vee}$ denote the dual projective space, let
$\pr_{\calC}: \calC \rightarrow \PP^\vee$
denote the universal family of hyperplane sections of $X$,
and let
$$Z(X) = \{(x,H) | x \in X, H \in \PP^\vee, T_{x} X \subset H\} \subset \calC \subset \PP^g \times \PP^{\vee}$$
denote the locus of the incidence variety parametrizing singular points.
It is smooth of dimension $g -1$, since it is a projective bundle over $X$.
By definition, the dual variety $X^\vee$ is the (reduced) image of the projection
of $Z(X)$ to $\PP^{\vee}$:
$$Z(X)  \stackrel{\pr}{\longrightarrow} X^\vee \subset \PP^\vee.$$

We first show that the projection $\pr: Z(X) \rightarrow X^\vee$ is birational.
Consider the intersection number
$$\deg \pr^*(h)^{g-1} \cap [Z(X)] = \deg h^{g-1}\cap \pr_*[Z(X)] = 6d + 24,$$
where this calculation follows from the Pl\"{u}cker formula for the dual variety (\cite{holme}).
Since this is nonzero, we must have $X^{\vee}$ is a hypersurface and $\pr$ is generically finite.  
Since $$p > 2d+ 4 > d + 4,$$
$p$ does not divide $\deg(\pr)$,
so $\pr$ must be generically \'etale.
In fact, since $\pr$ is generically \'etale, it follows from Theorem $2.1$ of \cite{holme} that 
 $X$ is reflexive (meaning $Z(X^{\vee}) = Z(X)$).  Furthermore, by Corollary $2.2$ of \cite{holme},
 the generic fiber of $\pr$ is a linear space, so it must be a point and $\pr$ is birational.  
 Let $X^\vee_\circ$ denote the locus on $X^\vee$ over which $\pr$ is an isomorphism.
Given a hyperplane $[H] \in X^\vee_\circ$, it is shown in (\cite{holme}, Cor. $2.2$) that 
$\mathrm{Sing}(H \cap X)$ is nondegenerate, so must consist of a single node.

Suppose we have a line $\ell \subset \PP^\vee$ which intersects $X^\vee$ transversely at a finite number of points inside $X^\vee_\circ$.  It follows from a dimension count that the set of such lines is open.  Given a point $(x,[H]) \in \calC \times_{\PP^\vee}\ell$, its tangent space is the kernel of
$$T_{x,[H]}\calC \oplus T_{[H]}\ell \rightarrow T_{[H]}\PP^{\vee}.$$
Because $T_{[H]}X^\vee$ is contained in the image of $T_{x,[H]}\calC$, transversality implies this map is surjective, so the kernel is two-dimensional and  $\calC \times_{\PP^\vee}\ell$ is smooth.

We next show that a generic pencil will have irreducible fibers.
It suffices to show that the generic element of $X^\vee$ is a geometrically irreducible hyperplane section.

If not, we have a very ample divisor with a decomposition into smooth connected curves
$$C = C_1 + C_2$$
such that $C_1\cdot C_2 = 1$.
By the Hodge index theorem, $1 > (C_1\cdot C_1)(C_2\cdot C_2)$ so one of the curves (say $C_1$) is either rational or elliptic.
In the first case, $L\cdot C_1 = -1$ and in the second case $L\cdot C_1 = 1$; both of these contradict the fact that $L$ is very ample.


\end{proof}

\begin{remark}
Over $\overline{K}$, the discriminant locus of a generic pencil is reduced.
Therefore, after taking a finite separable extension, we can choose our line over $K$ to have the property that
there exists a collection of $K$-points $D \subset \PP^1_K$, such that $Y$ is smooth on the complement of $D$.
\end{remark}

We fix $Y$ as in the statement of the proposition.  In what follows, let
$$Y_{\mathrm{sm}} = \pi^{-1}(\PP^1_K - D)$$
denote the locus of smooth fibers of the map $\pi$.

\begin{prop}\label{semistable}
After possibly replacing $Y$ with the base change by a finite separable extension of $K$,
there exists a birational map
$$\rho: Y' \rightarrow Y$$
such that $\rho$ is an isomorphism over the locus $Y_{\mathrm{sm}}$
and such that $Y'$ is the generic fiber of a semistable model
$$g: \calY \rightarrow \Delta.$$  Furthermore, there exists an ample $\QQ$-divisor $\calL$ on $\calY$ whose restriction 
to $Y'$ agrees with 
$\tau^*L$ on $\rho^{-1}(Y_{\mathrm{sm}})$,
up to $\QQ$-linear equivalence.
\end{prop}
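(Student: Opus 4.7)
The plan is to realize $Y$ as a family of curves over $\PP^1_K$ and then invoke T.~Saito's semistable reduction theorem for iterated fibrations of curves; this is precisely where the bound $p > 2d+4$ is consumed. First I would fix a projective embedding of $Y$ over $K$ and take the scheme-theoretic closure in $\PP^N_{\Delta}$ to form a flat projective model $\calY_0$ of $Y$ over $\Delta$. After resolving the indeterminacy of $\pi$ by a sequence of blowups concentrated in the special fiber (for example by replacing $\calY_0$ with the closure of its graph inside $\calY_0 \times_{\Delta} \PP^1_{\Delta}$), I may assume the pencil extends to a morphism $\calY_0 \to \PP^1_{\Delta}$ whose generic geometric fibers are smooth curves of genus $g = d+1$ by Lemma~\ref{genericpencil}.

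Next I would apply Saito's theorem to the composition $\calY_0 \to \PP^1_{\Delta} \to \Delta$. After a finite separable extension of $K$ and a sequence of blowups concentrated over the special fiber of $\Delta$ together with the singular fibers of $\pi$, this produces a semistable model $g : \calY \to \Delta$ whose generic fiber $Y'$ is a birational modification of $Y$. The numerical hypothesis $2g+2 = 2d+4 < p$ is precisely Saito's genus bound for curves of genus $g$, and is what forces the output of the theorem to be genuinely semistable rather than merely pluristable or of some wilder local model. Because the modifications take place only above $\pi^{-1}(D)$ and the closed point of $\Delta$, the induced birational map $\rho : Y' \to Y$ is automatically an isomorphism on $\rho^{-1}(Y_{\mathrm{sm}})$.

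Finally I would construct the ample $\QQ$-divisor $\calL$. Taking the scheme-theoretic closure of $\rho^{*}\tau^{*}L$ in $\calY$ gives a Weil divisor extending $\rho^{*}\tau^{*}L$ on the generic fiber; after passing if necessary to a small $\QQ$-factorialization (which does not disturb the smooth locus of $\pi$ or the semistability of the central fiber), I may regard it as a $\QQ$-Cartier divisor $\calL_0$ on $\calY$. To achieve ampleness relative to $\Delta$, I would then modify by adding a $\QQ$-linear combination $\sum a_i E_i$ of components $E_i$ of the central fiber $\calY_s$. Since this perturbation is vertical over $\Delta$, the generic fiber of $\calL := \calL_0 + \sum a_i E_i$ is still $\rho^{*}\tau^{*}L$, and in particular the restriction to $\rho^{-1}(Y_{\mathrm{sm}})$ is unchanged up to $\QQ$-linear equivalence. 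The coefficients $a_i$ are to be chosen so that $\calL$ satisfies the Nakai-Moishezon criterion on the central fiber --- a combinatorial problem made tractable by semistability --- while using that $\calL_0$ already restricts to an ample bundle on the smooth generic fiber of $g$.

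The main obstacle is the careful invocation of Saito's theorem with the correct genus bound, and this is also where the hypothesis $p > 2d+4$ is strictly necessary: without it there is no reason the fibered surface $\calY_0 \to \PP^1_{\Delta}$ should have a semistable model after base change. A secondary subtlety is the $\QQ$-factorialization step needed to promote the naive closure of $\tau^{*}L$ to a $\QQ$-Cartier divisor on a three-dimensional semistable model, but this is standard and does not interfere with the inputs required for the MMP argument of the next subsection.
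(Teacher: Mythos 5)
The first half of your argument --- fibering $Y$ in genus $g=d+1$ curves over $\PP^1$ and invoking T.~Saito's semistable reduction for such fibrations, with the bound $p>2g+2=2d+4$ --- is essentially the route the paper takes (the paper is a bit more careful about the base: it first replaces $(\PP^1_K,D)$ by a stable marked genus-zero curve $B\rightarrow\Delta$ and applies Saito's Theorems $1.3$ and $1.8$ to get a log smooth model $\calY_0\rightarrow B$, and only after a further log blowup away from $Y_{\mathrm{sm}}$ a semistable $\calY$, with $\rho$ coming from stabilization).

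The construction of the ample $\QQ$-divisor $\calL$, however, has a genuine gap. Your $\calL=\calL_0+\sum a_iE_i$ differs from the closure of $\rho^*\tau^*L$ only by components of the closed fiber of $g$, so its restriction to the generic fiber $Y'$ is exactly the class of $\rho^*\tau^*L$. But $\rho^*\tau^*L$ is not ample on $Y'$: $\tau$ blows up the $2d$ base points of the pencil, so $\tau^*L$ has degree zero on the $\tau$-exceptional curves (and $\rho^*\tau^*L$ is likewise trivial on any $\rho$-exceptional curves). Since $\Delta$ is affine, an ample $\QQ$-divisor on $\calY$ restricts to an ample divisor on $Y'$, so no choice of the vertical coefficients $a_i$ can make your $\calL$ ample; for the same reason it cannot be made ample on the closed fiber either, since over a DVR ampleness on the closed fiber forces relative ampleness and hence ampleness on $Y'$. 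In particular your assertion that $\calL_0$ ``already restricts to an ample bundle on the smooth generic fiber of $g$'' is false. This is exactly why the proposition only asks for agreement with $\tau^*L$ on $\rho^{-1}(Y_{\mathrm{sm}})$: the divisor must also be modified over $D$. The paper does this by noting that $\tau^*L=\omega_{Y/\PP^1_K}$ on $Y_{\mathrm{sm}}$ (adjunction plus $\Pic(\PP^1_K-D)=0$), using that Saito's $\calY_0$ comes equipped with a relatively ample $\QQ$-divisor over $B$ extending the relative dualizing sheaf, twisting by a $\QQ$-divisor supported on the exceptional locus of $\calY\rightarrow\calY_0$ to get $\calL'$ relatively ample over $B$, and finally adding a large multiple of the pullback of an ample divisor $A'$ on $B$ whose restriction to $\PP^1_K$ is supported on $D$; this last step converts ampleness over $B$ into ampleness over $\Delta$ while changing nothing on $Y_{\mathrm{sm}}$. (Your $\QQ$-factorialization worry is moot in any case: the semistable total space is regular, hence factorial.)
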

\begin{proof}

Without the condition on the line bundle, this essentially follows from Theorems $1.3$ and $1.8$ in \cite{saito}.  
The basic idea of Saito's argument is to use theorems of \cite{dejong-oort} and \cite{mochizuki} for extending stable curves over the complement of an open subset of the base.  The condition on the prime $p > 2g+2= 2d+4$ is imposed to guarantee that we can extend the stable curve in codimension one.  Given an iterated fibration of curves, he applies this principle repeatedly.

In our case, we first extend the pair $(\PP^1_K, D)$ over $\Spec K$ (after possible base change) to 
a marked genus $0$ stable curve 
$$B \rightarrow \Delta.$$
The cited theorems in \cite{saito} then give an extension of $Y_{\mathrm{sm}}$ to a log smooth proper scheme
$\calY_0 \rightarrow B$ and, after a further log blowup away from $Y_{\mathrm{sm}}$, to a semistable relative surface
$$\calY \rightarrow B \rightarrow \Delta.$$  Furthermore, since $\calY_0 \rightarrow B$ is log smooth, it is a family of nodal curves, so there is a stabilization map
$(\calY_0)_K \rightarrow Y$.  If we combine this with the map $\calY_K \rightarrow (\calY_0)_K$, we have the birational map
$$\rho: \calY_K \rightarrow Y.$$

It remains to construct the $\QQ$-divisor $\calL$.  
First notice that if we consider the relative dualizing sheaf $\omega_{Y/\PP^{1}_{K}}$, we have an equality
$$\tau^*L = \omega_{Y/\PP^{1}_{K}}$$
when restricted to $Y_{\mathrm{sm}}$.  Indeed, they agree fiberwise by the adjunction formula, so differ by the pullback of an element of $\Pic(\PP^1_{K} - D) = 0$.

We first construct a relatively ample $\QQ$-divisor $\calL'$ for the map $\calY \rightarrow B$
that agrees with $\tau^*L$ on $Y_{\mathrm{sm}}$.
The scheme $\calY_0$ is constructed along with a relatively ample $\QQ$-divisor extending the relative dualizing sheaf (see the discussion on page 29 of
\cite{saito}).
Since $\calY \rightarrow \calY_0$ is the normalization of a blowup map, there exists a relatively ample line bundle for this map supported on the exceptional locus.  After twisting by some possibly fractional power of this bundle, we can construct $\calL'$.

Finally, choose an ample divisor $A'$ on $B$ that, when restricted to $\PP^{1}_K$, is supported on $D$; 
We then construct $\calL$ by twisting $\calL'$ by a sufficiently large multiple of the pullback of $A'$. Since $A'$ is trivial on $Y_{\mathrm{sm}}$, 
we have
$$\calL|_{\rho^{-1}(Y_{\mathrm{sm}})} \sim_{\QQ} \rho^*\tau^*L|_{\rho^{-1}(Y_{\mathrm{sm}})}.$$
\end{proof}

\subsection{Semistable MMP with scaling}

We now apply Kawamata's semistable MMP to write down a minimal model of
$\calY \rightarrow \Delta$.  We first give a brief overview of his results before applying them to our setting.

Let$$f: V \rightarrow \Delta$$ be a flat, projective morphism, with relative dimension $2$ and with $V$ normal.  We can define a Weil divisor $K_{V/\Delta}$, well-defined up to linear equivalence, associated to the rank $1$ reflexive sheaf 
$\iota_*(\omega_{V^{\circ}/\Delta})$ where $\iota: V^{\circ} \rightarrow V$ is the inclusion of the locus where $f$ is smooth.  

In Kawamata's paper, he imposes the following conditions (**) on the singularities of $V$:
\begin{enumerate}
\item $V$ is Cohen-Macaulay, $\QQ$-factorial, and regular away from finitely many points
\item $f$ has smooth generic fiber, 
\item the special fiber $f^*(s)$ is reduced,
\item $K_{V/\Delta}+f^*(s)$ is log terminal.
\end{enumerate}

We define a one-cycle on $V/\Delta$ to be an integral linear combination of irreducible, reduced one-dimensional closed subshemes $C \subset V$, contained in fibers of $f$.  Let $N_1(V/\Delta)$ denote the group of one-cycles modulo numerical equivalence and $N_1(V/\Delta)_{\RR} = N_1(V/\Delta)\otimes\RR$.  We take $\NE(V/\Delta)$ to be the closed convex cone inside generated by effective one-cycles on $V$.

Given a $K_{V/\Delta}$-negative extremal ray $R$ of the cone $\NE(V/\Delta)$, the contraction theorem asserts that there exists a projective surjective map
$$\pi_R:V \rightarrow Z$$ to a flat normal scheme $Z/\Delta$ such that
$\pi_R$ contracts a one-cycle $C$ if and only if $[C] \in R \subset \NE(V/\Delta)$. 

There are three possibilities for the map $\pi_R$:
\begin{enumerate}
\item $\pi_R$ is a fibration with $\dim Z < 3$
\item $\pi_R$ is birational, with divisorial exceptional locus.
\item $\pi_R$ is birational with small exceptional locus.
\end{enumerate}




In the second case, $Z/\Delta$ again satisfies the conditions (**).  In the third case, there exists a flip $$\pi^+: V^+ \rightarrow Z,$$
i.e., a small birational contraction such that $K_{V^+/\Delta}$ is $\pi^+$-ample and $V^+$ satisfies conditions (**).  By choosing $K$-negative extremal rays and repeating this process, this algorithm eventually terminates.  The endpoint is either a fibration or a birational model satisfying (**) with $K_{V/\Delta}$ nef.

Suppose we are given a $\QQ$-divisor $H$ on $V$ such that $K_{V/\Delta} + H$ is nef.
The following lemma explains how to preserve this condition while applying the above package.
\begin{lem}\label{scaling}
Suppose that $K_{V/\Delta}$ is not nef. There exists an extremal ray $R$ and positive rational $\alpha \leq 1$ such that either $\pi_R$ is a fibration or, if $\tau: V \dasharrow V'$ denotes the modification associated to $R$ (either divisorial contraction or flip), then $K_{V'/\Delta} + \tau_*(\alpha H)$ is nef on $V'$, where $\tau_*$ denotes either pushforward or strict transform.
\end{lem}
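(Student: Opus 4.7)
The plan is to apply the standard MMP-with-scaling recipe: define $\alpha$ as the smallest $t\geq 0$ for which $K_{V/\Delta}+tH$ is nef, pick a $K_{V/\Delta}$-negative extremal ray on the resulting boundary face of $\NE(V/\Delta)$, and check that the contraction or flip provided by Kawamata's theorems preserves nefness of $K_{V/\Delta}+\alpha H$ after pushforward.

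First I would set
$$\alpha:=\inf\{t\in\RR_{\geq 0}\,:\,K_{V/\Delta}+tH\text{ is nef over }\Delta\}.$$
Since $K_{V/\Delta}+H$ is nef by hypothesis while $K_{V/\Delta}$ is not, we have $0<\alpha\leq 1$. The rationality theorem proved in Kawamata's paper then gives $\alpha\in\QQ$, and by continuity $K_{V/\Delta}+\alpha H$ is nef and vanishes along a nontrivial extremal face of $\NE(V/\Delta)$.

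Next I would locate a $K_{V/\Delta}$-negative extremal ray $R$ inside this face. By the cone theorem, every $K_{V/\Delta}$-negative extremal ray $R_i$ satisfies $H\cdot R_i>0$ (otherwise $K_{V/\Delta}+H$ would fail to be nef on $R_i$), so $\alpha=\sup_i -(K_{V/\Delta}\cdot R_i)/(H\cdot R_i)$ is positive; local discreteness of $K_{V/\Delta}$-negative rays forces this supremum to be attained, say on $R=R_{i_0}$, and by construction $(K_{V/\Delta}+\alpha H)\cdot R=0$ while $K_{V/\Delta}\cdot R<0$. Applying the contraction theorem to $R$ either yields a fibration (and the lemma holds) or the required birational modification $\tau:V\dashrightarrow V'$.

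In the non-fibration cases I would verify nefness of $K_{V'/\Delta}+\tau_*(\alpha H)$ as follows. Triviality of $K_{V/\Delta}+\alpha H$ on $R$ implies $K_{V/\Delta}+\alpha H\sim_\QQ\pi_R^{*}D$ for a $\QQ$-Cartier divisor $D$ on the contraction target $Z$, and nefness of $\pi_R^{*}D$ combined with surjectivity of $\pi_R$ forces $D$ itself to be nef on $Z$. For a divisorial contraction ($V'=Z$, $\tau=\pi_R$), the standard pushforward formula identifies $D$ with $K_{V'/\Delta}+\alpha\tau_*H$, so we are done. For a flip $\tau:V\dashrightarrow V^+$, we pull $D$ back by the small projective morphism $\pi^+:V^+\to Z$ to obtain a nef divisor; checking that this pullback equals $K_{V^+/\Delta}+\alpha\tau_*H$ uses the $\QQ$-factoriality from (**) to extend the equality across the small exceptional locus. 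This last identification, together with the flip version of the canonical bundle formula, is the main technical point I expect to require care.
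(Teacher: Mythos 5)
Your proposal follows essentially the same route as the paper: define $\alpha$ as the nef threshold of $H$ against $K_{V/\Delta}$, identify it as a supremum of ratios $-(K_{V/\Delta}\cdot R_i)/(H\cdot R_i)$ over $K_{V/\Delta}$-negative extremal rays, pick a ray attaining it, and then descend $K_{V/\Delta}+\alpha H$ to the contraction target (it is trivial on $R$, hence a pullback of a nef $\QQ$-Cartier divisor $D$ on $Z$), pushing forward in the divisorial case and pulling back along $\pi^+$ and comparing in codimension one in the flip case. The flip-case identification you flag as the delicate point is handled in the paper exactly as you suggest: since $V$ and $V^{+}$ agree in codimension one, $(\pi^{+})^{*}D\sim_{\QQ}K_{V^{+}/\Delta}+\alpha\,\tau_{*}H$.

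The one place where your justification is thinner than the paper's is the attainment of the supremum. ``Local discreteness of $K_{V/\Delta}$-negative rays'' does not by itself rule out a sequence of rays whose ratios increase to $\alpha$ without ever reaching it, with the rays drifting toward the hyperplane $K_{V/\Delta}=0$; and since $H$ is only assumed to satisfy $K_{V/\Delta}+H$ nef (not to be ample), you cannot invoke finiteness of $(K_{V/\Delta}+tH)$-negative extremal rays for $t<\alpha$ either. The paper closes this with the length bound built into Kawamata's cone theorem (via Tsunoda--Miyanishi): every $K_{V/\Delta}$-negative extremal ray is generated by an irreducible curve $C_i$ with $0<-K_{V/\Delta}\cdot C_i<4$, so, $K_{V/\Delta}$ being $\QQ$-Cartier of bounded index, the numerators take only finitely many values and the supremum is achieved on some ray. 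Once attainment is known, rationality of $\alpha$ is automatic, so your appeal to a separate rationality theorem is not needed. With that repair your argument matches the paper's proof.
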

\begin{proof}
The proof of this lemma is well-known and appears in many places, for example \cite{fujino}.
We sketch the argument.

Consider the set
$$S_H = \{t \in \QQ| K_{V/\Delta} + tH \textrm{ nef}\}.$$
Since $K_{V/\Delta}$ is not nef, $S_H$ is bounded below by $0$. 
Let
$\alpha = \mathrm{inf} S_H$.
It is easy to see that $\alpha$ is determined by $K_{V/\Delta}$-negative extremal rays:
$$\alpha = \mathrm{sup}_{i}\left\{\frac{-K_{V/\Delta}\cdot R_i}{H\cdot R_i} \right\}.$$

It follows from the cone theorem \cite{kawamata} that each $K$-negative extremal ray $R_i$ is generated by an irreducible curve $C_i$ for which $0 < -K_{V/\Delta}\cdot C_i < 4$.  Indeed, such a bound holds for extremal rays for a log surface $(S,D)$ in characteristic $p$ by Propositoin $2.9$ of \cite{tsunoda-miyanishi}; Kawamata shows that extremal rays for $V/\Delta$ are generated by extremal rays for irreducible components of fibers.  


Although there are countably many rays, there are only finitely many values of the numerator in the above expression, so the supremum is achieved on some extremal ray $R$.

If $\pi_R$ is a divisorial contraction then, since $(K_{V/\Delta} + \alpha H)\cdot R = 0$, $(K_{V/\Delta} + \alpha H)$ is the pullback of a $\QQ$-Cartier $\QQ$-divisor $D$ on $Z$ (\cite{kollar-mori}, Cor. $3.17$), which is necessarily nef.  Furthermore, by projecting to $Z$, we can calculate it:
$$D \sim_{\QQ} K_{Z/\Delta} + \alpha\pi_*H.$$

If $\pi_R$ is a small contraction with flip $\pi^+: V' \rightarrow Z$, then we again find a nef $\QQ$-Cartier $\QQ$-divisor $D$ on $Z$ as above, and look at $D' = (\pi^+)^*D$ on $V'$, which is also nef.  Since $V$ and $V'$ agree in codimension one, we have
$D' \sim_{\QQ} K_{V'/\Delta} + \alpha \tau_*(H)$.
\end{proof}

\subsection{Proof of Theorem \ref{ss-degeneration}}

Let 
$$f: \calY \rightarrow \Delta$$ be the semistable model constructed in Proposition \ref{semistable}.  In particular, we have a birational map 
$$\tau\circ\rho: \calY_{K}= Y' \rightarrow Y \rightarrow X$$ and an ample $\QQ$-divisor
$\calL$ on $\calY$ whose restriction to $Y'$ agrees with the pullback of $L$ on the smooth locus of  
$Y' \rightarrow \PP^1_{K}$, up to $\QQ$-linear equivalence.

We apply semistable MMP with scaling as in the last section.  Since the generic fiber has Kodaira dimension zero, all steps are divisorial contractions or flips along the central fiber.  As an endpoint, we obtain a projective family of surfaces
$$g: \calX \rightarrow \Delta$$
which has singularities of type (**) and for which the canonical divisor
$K_{\calX/\Delta}$ is nef.  Since $X$ is $K$-trivial, we recover the contraction $\tau\circ\rho: Y' \rightarrow X$ as a series of blowdowns when we restrict every step to $\Spec K$.

Furthermore, by Lemma \ref{scaling} we have a $\QQ$-divisor $H$
such that $K_{\calX/\Delta} + H$ is nef.  If we restrict $H$ to $X$, it follows from the statement of the lemma that there exists $0 < \alpha \in \QQ$ such that 
$$H|_{X} \sim_{\QQ} \alpha (\tau\circ\rho)_*(\calL|_{Y'}).$$
By construction, 
the $\QQ$-divisors $\rho_*(\calL|_{Y'})$ and $\tau^*L$ agree (up to $\sim_{\QQ}$) when restricted to $Y_{\mathrm{sm}}$.  Since all fibers of  $\pi:Y\rightarrow \PP^1$ are reduced and irreducible by Lemma \ref{genericpencil}, we have
$$\rho_*(\calL|_{Y'}) \sim_{\QQ} \tau^*L + \epsilon [F]$$
for  $\epsilon \in \QQ$ and $[F]$ is the class of a fiber of $\pi$.
Since $\tau_*[F] = L$, we have that $$H|_{X} \sim_{\QQ} \tau_*(\tau^*L) + \epsilon L =  \gamma L$$ for some $\gamma \in \QQ$.  
Since $\calL|_{Y'}$ is ample, $H|_{X}$ is $\QQ$-linearly equivalent to a nonzero effective $\QQ$-divisor.  So we must have $\gamma > 0$.

The following lemma is proven in Corollary $3.7$ of \cite{corti}.
\begin{lem}  
We have 
$K_{\calX/\Delta} = 0.$  In particular, $K_{\calX/\Delta}$ is Cartier.
\end{lem}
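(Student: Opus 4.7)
The plan is to exploit the rigidity coming from $\calX_K = X$ being K3 together with $K_{\calX/\Delta}$ being nef (the output of Kawamata's MMP). Since $K_X = 0$, the restriction of $K_{\calX/\Delta}$ to the generic fiber is trivial, so $K_{\calX/\Delta}$ is $\QQ$-linearly equivalent to a $\QQ$-divisor $\sum_i a_i E_i$ supported on the special fiber $\calX_s = \sum_i E_i$ (which remains reduced throughout the MMP, by condition (**)). The goal will be to show all the $a_i$ coincide; since $\calX_s$ is the principal divisor of a uniformizer, this forces $K_{\calX/\Delta} \sim_\QQ 0$, and an integrality argument will then upgrade this to equality of Cartier divisors.

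For each component $E_i$, adjunction combined with $E_i|_{E_i} = -D_i$ (where $D_i = \sum_{j \neq i}(E_i \cap E_j)$ is the double locus, the sign coming from $\calX_s|_{E_i} = 0$ and $K_\Delta \sim 0$) gives
\begin{equation*}
K_{\calX/\Delta}|_{E_i} = K_{E_i} + D_i.
\end{equation*}
Nefness of $K_{\calX/\Delta}$ then says that the log canonical $K_{E_i} + D_i$ of the pair $(E_i, D_i)$ is nef for each $i$. Independently, from $K_{\calX/\Delta} \sim_\QQ \sum_j a_j E_j$ one computes the same restriction as $\sum_{j\neq i}(a_j - a_i) E_{ij}$ on $E_i$, where $E_{ij} = E_i\cap E_j$. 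The two expressions must be $\QQ$-linearly equivalent on $E_i$.

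It then remains to show that the only possibility consistent with minimality of $\calX/\Delta$ is $K_{E_i} + D_i \sim 0$ on each $E_i$ (so that the $a_j - a_i$ combinations vanish and all $a_i$ agree). This is the content of Corti's Corollary 3.7: running through the Kulikov-Pinkham-Persson trichotomy, one checks that each $(E_i, D_i)$ must be a log Calabi-Yau pair of Type I (smooth K3 with $D_i = \emptyset$), Type II (elliptic ruled surface with two disjoint elliptic sections), or Type III (rational surface with $D_i$ a cycle of rational curves), and in each case the log canonical is trivial. The combinatorial analysis uses the bound on $K$-negative extremal rays on log surfaces that was already invoked in Lemma \ref{scaling}, so it goes through in characteristic $p > 2d+4$.

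The main obstacle is this last classification step, since in characteristic $p$ standard bend-and-break and vanishing arguments underlying the KPP trichotomy are more delicate than in characteristic zero. However, the prime bound $p > 2d+4$ inherited from the construction of the semistable model in Section~\ref{saito-section} controls the contraction geometry enough for the characteristic-zero classification arguments to be transported. Once triviality of $K_{\calX/\Delta}$ as a $\QQ$-Cartier divisor is established, Cartierness follows because $\QQ$-factoriality together with vanishing of the class in $\mathrm{Pic}(\calX)\otimes\QQ$ pins down a unique representative, and the K3 structure on the generic fiber rules out any torsion contribution.
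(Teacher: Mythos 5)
There is a genuine gap, and it sits exactly where you place the main weight of your argument. You reduce the lemma to the claim that each pair $(E_i,D_i)$ is a log Calabi--Yau pair falling into the Kulikov--Pinkham--Persson trichotomy, and you then assert that the characteristic-zero classification arguments ``transport'' to characteristic $p>2d+4$. This is not justified: the bound on $K$-negative extremal rays used in Lemma \ref{scaling} controls single contraction steps, not the global bend-and-break/vanishing analysis underlying the KPP trichotomy, and no argument is offered for why nefness of $K_{E_i}+D_i$ plus ``minimality'' forces $K_{E_i}+D_i\sim 0$ in positive characteristic. Worse, in the paper's logical order this is circular: the (weak) KPP-type classification of the central fiber is \emph{deduced from} this lemma (triviality and hence Cartierness of $K_{\calX/\Delta}$ is what gives, via Kawamata's classification of the singularities, that $\calX_0$ has only rational double points or normal crossings, after which Nakkajima's argument applies). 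For the same reason your adjunction formula $K_{\calX/\Delta}|_{E_i}=K_{E_i}+D_i$ with $D_i$ the double locus presupposes a normal-crossings structure on the special fiber that is not yet available at this point; all one knows is that $\calX/\Delta$ satisfies Kawamata's conditions (**). Finally, even granting $K_{E_i}+D_i\sim 0$, the step from $\sum_{j\neq i}(a_j-a_i)E_{ij}\sim_{\QQ}0$ on each $E_i$ to ``all $a_i$ coincide'' is not spelled out, and your concluding remark that $\QQ$-factoriality plus triviality in $\Pic(\calX)\otimes\QQ$ yields Cartierness does not by itself rule out a nontrivial integral class.

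The paper's proof avoids all of this and is elementary. Since $K_X$ is trivial on the generic fiber, upper semicontinuity of $H^0$ shows $-K_{\calX/\Delta}$ is represented by an effective Weil divisor supported on the central fiber, $-K_{\calX/\Delta}=\sum a_i[S_i]$ with \emph{integers} $a_i\geq 0$. Because the whole fiber $\sum[S_i]$ is principal, one may normalize so that some $a_i=0$. If not all $a_j$ vanish, choose adjacent components $S_i,S_j$ with $a_i=0$, $a_j>0$, and a generic curve $C\subset S_i$ meeting $S_j$ but not contained in it; then $-K_{\calX/\Delta}\cdot C>0$, contradicting nefness of $K_{\calX/\Delta}$. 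Hence $K_{\calX/\Delta}=0$ as an integral Weil divisor class, and Cartierness is immediate. If you want to salvage your approach, you would have to supply an actual characteristic-$p$ proof of the triviality of the log canonical classes of the components without assuming the fiber structure you are trying to establish; the direct curve-intersection argument makes that unnecessary.
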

Notice that this is an equality of Weil divisor classes, i.e. not up to $\QQ$-linear equivalence.
\begin{proof}
Since $K_X$ is trivial, by uppersemicontinuity of $H^0(g_*\calO(- K_{\calX/\Delta}))$, $-K_{\calX/\Delta}$ is represented by an effective Weil divisor supported on the central fiber.  If the irreducible components of the central fiber are $S_i$, we have an equality of Weil divisor classes
$$-K_{\calX/\Delta} = \sum a_i [S_i],$$
with integers $a_i \geq 0$.
Since $\sum [S_i] = 0$, we can arrange for $a_i = 0$ for some $i$.  If not all $a_j = 0$, we can choose adjacent components $S_i, S_j$ such that $a_i =0$ and $a_j> 0$.  By choosing a generic curve $C$ in $S_i$ that meets $S_j$ without lying in $S_j$, we see that $-K_{\calX/\Delta}\cdot C > 0$ contradicting the fact that $K_{\calX/\Delta}$ is nef.
Therefore $K_{\calX/\Delta} = 0$.
\end{proof}

\begin{cor}
The $\QQ$-divisor $H$ is relatively big and nef.
\end{cor}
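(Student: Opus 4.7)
The plan is to verify nefness and relative bigness separately; both statements fall out of what has just been assembled, so the task is essentially synthesis rather than new work.

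For nefness, I would invoke the preceding lemma to replace $K_{\calX/\Delta}$ by zero. The MMP with scaling (Lemma \ref{scaling}) was set up so that the property ``$K_{V/\Delta}+H$ nef'' is preserved at every divisorial contraction and flip in the program, and hence holds on the final model $\calX$. Combining these two facts, $H = (K_{\calX/\Delta} + H) - K_{\calX/\Delta}$ is nef.

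For relative bigness, since $\Delta$ is the spectrum of a DVR and $g$ is projective, a $\QQ$-Cartier $\QQ$-divisor on $\calX$ is $g$-big if and only if its restriction to the generic fiber $X$ is big in the usual sense. The computation preceding the corollary established that $H|_{X}\sim_{\QQ}\gamma L$ for some rational $\gamma>0$, and $L$ is very ample on the K3 surface $X$ with $L^{2}=2d>0$, so $H|_{X}$ is big and hence so is $H$ relatively. All divisor-theoretic manipulations are legitimate because $\calX$ satisfies condition (**) and is in particular $\QQ$-factorial, so every Weil $\QQ$-divisor that appears is automatically $\QQ$-Cartier.

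I do not anticipate any serious obstacle here: the only subtlety is ensuring that the identification $H|_{X}\sim_{\QQ}\gamma L$ with $\gamma>0$ survives every step of the program, but that bookkeeping was precisely the motivation for introducing MMP \emph{with scaling} and was already carried out in the paragraphs preceding the corollary.
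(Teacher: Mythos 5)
Your proof is correct and follows essentially the same route as the paper: nefness from $K_{\calX/\Delta}=0$ together with the nefness of $K_{\calX/\Delta}+H$ supplied by the MMP with scaling, and bigness from $H|_{X}\sim_{\QQ}\gamma L$ with $\gamma>0$. The only cosmetic difference is that the paper also records bigness on the central fiber via upper semicontinuity of $h^0$, a point you fold into your definitional convention of relative bigness and which is immediate by the same semicontinuity argument.
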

\begin{proof}
Since $H|_{X}$ is a positive rational multiple of $L$ (up to $\sim_{\QQ}$), it is big over $\Spec K$ which implies the statement on the central fiber by upper semi-continuity.  Since $K_{\calX/\Delta}+ H$ is nef, the previous lemma completes the claim.
\end{proof}

\begin{cor}
The singularities of the central fiber $\calX_0$ are rational double points or normal crossings type.
\end{cor}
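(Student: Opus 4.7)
From the MMP setup, the pair $(\calX,\calX_0)$ is log terminal with $\calX_0=g^{*}(s)$ reduced, and combined with $K_{\calX/\Delta}=0$ from the preceding lemma, $\calX$ is Gorenstein with trivial canonical divisor on a neighborhood of the central fiber; in particular $K_\calX+\calX_0\sim\calX_0$ and adjunction gives $K_{\calX_0}\sim 0$. The plan is to split the analysis of $\calX_0$ into two parts, according to whether $(\calX,\calX_0)$ is simple normal crossings at a given point.

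By the log terminal hypothesis, there exists an open subset $U\subseteq\calX$ with $\calX\setminus U$ of codimension at least two such that $(U,\calX_0\cap U)$ is simple normal crossings. At every point of $\calX_0\cap U$, the divisor $\calX_0$ is analytically a union of coordinate hyperplanes in a smooth threefold, yielding exactly the normal crossings singularities allowed by the corollary: smooth points, transverse double curves, and triple points.

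For the remaining finite set of points $p\in\calX_0\setminus U$, the threefold $\calX$ must itself be singular at $p$; since $\calX$ is Gorenstein and the log terminal condition forces $\calX$ to have canonical singularities, Reid's classification identifies the analytic type of $\calX$ at $p$ as compound Du Val. If only one irreducible component $S\subseteq\calX_0$ passes through $p$, then applying adjunction for the log terminal pair and using the triviality of $K_\calX+\calX_0$ gives $K_S\sim 0$ locally with $(S,0)$ klt at $p$, and a Gorenstein klt surface singularity with trivial canonical class is precisely a rational double point. If several components of $\calX_0$ meet at $p$, each component is normal (by adjunction for the log terminal pair) and the pairwise and triple intersections of components through $p$ are reduced, so $\calX_0$ at $p$ is of normal crossings type.

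The main technical obstacle is this last step, namely the analysis at the non-SNC points of the pair. It rests on combining Reid's classification of Gorenstein canonical threefold singularities with adjunction to transfer the log terminal and Gorenstein data from $\calX$ onto the induced singularities of each component of $\calX_0$, so as to characterize them as either Du Val or as transverse intersections of smooth sheets, consistent with the input required for the Kulikov--Pinkham--Persson classification used later.
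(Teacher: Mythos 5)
The central gap is your treatment of the non-SNC points. First, ``Gorenstein canonical $\Rightarrow$ compound Du Val'' is not Reid's classification: that characterization holds for Gorenstein \emph{terminal} threefold singularities, while the log terminal condition in (**) together with $K_{\calX/\Delta}$ Cartier only yields klt, hence canonical, and Gorenstein canonical threefold points can be much worse (e.g.\ cones over del Pezzo surfaces); nothing in your argument excludes these. Second, the claim that $\calX$ must be singular at every non-SNC point is false: the fiber can acquire a rational double point at a regular point of $\calX$ (formally $xy+z^{n+1}=t$), which is in fact one of the allowed outcomes, and the non-SNC locus is only of codimension $\geq 2$, so it could a priori contain curves rather than being a finite set --- you use finiteness without justification (in the paper it comes from condition (**), which makes $\calX$ regular away from finitely many points, plus the formal classification at those points). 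Third, and most seriously, in the case of several components through $p$ the inference ``each component normal, pairwise and triple intersections reduced, hence normal crossings type'' does not follow: components may themselves be singular at $p$ or meet along singular curves, and controlling this local analytic structure is exactly the content of the statement, not a formal consequence of adjunction.

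There is also a characteristic issue running through the whole proposal: adjunction with differents, Bertini-type statements for log terminal pairs, and Reid-style classifications are characteristic-zero technology, and their validity over a base of positive or mixed characteristic is precisely the delicate point here. The paper's own proof avoids all of this: it simply cites Theorem 4.4 of \cite{kawamata}, which classifies the complete local rings of the finitely many singular points of the output of the semistable MMP satisfying (**), and observes that the Cartier property of $K_{\calX/\Delta}$ (from the preceding lemma) eliminates the quotient cases on that list, leaving exactly rational double points or normal crossings in the central fiber. A self-contained argument along your lines would essentially amount to reproving Kawamata's classification.
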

\begin{proof}
The formal singularity type of the singular points of $\calX$ are classified in Theorem $4.4$ of \cite{kawamata}.
Since $K_{\calX/\Delta}$ is Cartier, these are the only possibilities.
\end{proof}

At this stage, we have shown a partial version in characteristic $p$ of the Kulikov-Pinkham-Persson classification of degenerations of a K3 surface over $\CC$.  Indeed, suppose $\Delta=\Spec R$ where $R$ is a \emph{complete} discrete valuation ring, and we are given a K3 surface $X \rightarrow \Spec K$ with a very ample bundle $L$ of degree $2d$ such that $p > 2d+4$.
After applying Artin's simultaneous resolution \cite{artin-brieskorn} to our construction so far, there exists a finite base change
$\Delta' \rightarrow \Delta$ and a small modification
$$\calX' \rightarrow \calX\times_{\Delta}\Delta'$$
such that $\calX'$ is an algebraic space over $\Delta$ with $\omega_{\calX'/\Delta'}$ trivial.  

The central fiber $\calX'_0$ of 
$$g': \calX' \rightarrow \Delta'$$ has only normal crossings singularities, trivial dualizing sheaf, and (by flatness of $g'$) has cohomology groups $H^j(\calO_{\calX'_{0}})$ equal to those of a K3 surface.   Therefore by the argument in Nakkajima \cite{nakkajima}, it must be a combinatorial K3 surface, i.e., one of three types:
\begin{enumerate}
\item smooth K3 surface (type I)
\item two rational surfaces joined by a chain of ruled surfaces over an elliptic curve (type II)
\item a configuration of rational surfaces whose dual graph is a triangulation of $S^2$. (type III)
\end{enumerate}
Note that this version of the KPP theorem is weaker than the usual formulation, since the total space $\calX'$ may be singular and we require a base change.

To complete the proof of Theorem \ref{ss-degeneration}, we now apply the fact that $X$ is supersingular.

\begin{lem}
The map $g: \calX \rightarrow \Delta$ is smooth.
\end{lem}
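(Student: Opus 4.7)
Suppose for contradiction that $g$ is not smooth, so that the central fiber has a genuine singularity. The strategy, following the template of \cite{rsz}, is to combine the KPP-style classification established above with the rigidity of supersingular crystalline cohomology.

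First, after a finite separable base change $\Delta' \to \Delta$ and Artin--Brieskorn simultaneous resolution as in the discussion preceding the lemma, I obtain an algebraic space $g': \calX' \to \Delta'$ which is semistable with trivial relative canonical divisor and isomorphic to $\calX$ on the generic fiber. By Nakkajima's argument already cited in the excerpt, its central fiber $\calX'_0$ is a combinatorial K3 of type I, II, or III; since $g$ was assumed non-smooth, $\calX'_0$ must be of type II or III, while the generic fiber of $g'$ remains the supersingular surface $X_{K'}$.

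Next, I analyze the $F$-isocrystal on $H^2_{\cris}(X_{K'})$ via log-crystalline cohomology of the semistable family $(\calX', \calX'_0)$. This equips $H^2_{\cris}(X_{K'})$ with a Frobenius $\phi$ and a nilpotent monodromy operator $N$ satisfying the compatibility $N\phi = p\phi N$. The combinatorics of a type II or III central fiber force $N \ne 0$; this is where the $\QQ$-factoriality of $\calX'$ plays its role, ensuring that the components $V_i$ of $\calX'_0$ are $\QQ$-Cartier so that their intersection patterns contribute nontrivial monodromy data to the cohomology of the nearby fiber. On the other hand, supersingularity of $X_{K'}$ means $\phi$ is isoclinic of slope $1$ on $H^2_{\cris}$, and the twisting relation $N\phi = p\phi N$ shows that $N$ shifts the Frobenius slope down by $1$; hence $N$ must annihilate every slope-$1$ vector, so $N = 0$ identically. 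This contradicts the type II or III combinatorial structure of $\calX'_0$, ruling out the non-smooth cases and yielding smoothness of $g$ after the base change and simultaneous resolution.

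The main obstacle is making this cohomological incompatibility rigorous in the equicharacteristic-$p$ setting, and in particular establishing the non-vanishing of the monodromy operator $N$ for semistable degenerations with type II or III central fibers. This is precisely the cohomological input provided by \cite{rsz}, worked out there in the polarization-degree-$2$ case; their argument transfers to the present setup essentially unchanged, since only the local combinatorial structure of the central fiber and the slope structure of $H^2_{\cris}$ of the supersingular generic fiber are used.
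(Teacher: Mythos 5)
There is a genuine gap: you have conflated ``$g$ is not smooth'' with ``the semistable model $\calX'_0$ is of type II or III,'' and in doing so you have dropped the case that the paper spends the second half of its proof excluding. The central fiber $\calX_0$ may have rational double point singularities (indeed, the corollary just before the lemma says these and normal crossings are the only possibilities); in that case the Artin--Brieskorn simultaneous resolution $\calX'_0$ is a \emph{smooth} K3 surface, i.e.\ of type I, and yet $g:\calX\rightarrow\Delta$ is still not smooth. No monodromy or slope argument on the nearby fiber can detect this, since the family $\calX'\rightarrow\Delta'$ is smooth. The paper handles it by a completely different mechanism: Artin's theorem that the Picard rank of supersingular K3 surfaces is constant under specialization gives $\Pic(\calX')\otimes\QQ=\Pic(\calX'_0)\otimes\QQ$, so an exceptional curve $C\subset\calX'_0$ contracted by $\calX'\rightarrow\calX$ would admit a line bundle $A$ on $\calX'$ with $\deg_C A\neq 0$; one then uses $\Pic(X)=\Pic(X_{\overline K})$ and the $\QQ$-factoriality of $\calX$ to extend (a multiple of) the generic-fiber restriction of $A$ to a line bundle $\calA$ on $\calX$, and $p^*\calA=A$ forces $\deg_C A=0$, a contradiction. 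This is also where $\QQ$-factoriality actually enters; your assertion that it serves to ``contribute nontrivial monodromy data'' is not what happens in the argument.

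For the exclusion of type II and III fibers themselves, your route (log-crystalline monodromy $N$ with $N\phi=p\phi N$, isoclinic slope $1$ forcing $N=0$, versus $N\neq 0$ for type II/III) differs from the paper's, which instead cites Section 2 of \cite{rsz}: type II and III combinatorial K3 surfaces have formal Brauer group of height at most $2$, while the height can only jump up under specialization, so a supersingular generic fiber (infinite height) cannot specialize to them. Note that \cite{rsz} does not supply the non-vanishing of $N$ you appeal to; attributing that step to them leaves your equicharacteristic-$p$ monodromy computation (a Clemens--Schmid/weight-monodromy statement in log-crystalline cohomology) unproved, whereas the formal Brauer group argument needs no such input. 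Even granting your type II/III exclusion, the proof is incomplete without the rational double point case.
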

\begin{proof}
It suffices to prove this lemma after replacing $\Delta$ with its completion at the closed point, so we can apply the KPP classification above, after a finite base change and small modification.

By Section $2$ of \cite{rsz}, the simultaneous resolution $\calX'$ cannot have normal crossings singularities.  
Indeed, they show that type II and type III combinatorial K3 surfaces have formal Brauer group with height at most $2$; since the height can only jump under specialization, the central fiber must be a smooth K3 surface.  Therefore, $\calX_0$ has at worst double-point singularities.

To rule these out, we argue as follows.
By Theorem $1.1$ of \cite{artin-ss}, Picard ranks of supersingular K3 surfaces are constant under specialization.  Therefore if we again consider the simultaneous resolution, we see that
$$\Pic(\calX')\otimes\QQ = \Pic(\calX'_0)\otimes\QQ.$$

Suppose there exists an exceptional curve $C \subset \calX'_0$.  Then by the above property, there exists a line bundle $A$ on $\calX'$ such that
$$\deg_C A \ne 0.$$

On the other hand, recall that we have arranged for $\Pic(X) = \Pic(X_{\overline{K}})$.  Therefore the line bundle

$$A|_{g'^{-1}(\Delta'\backslash 0)}$$ descends to $X$.
Since $\calX$ is $\QQ$-factorial, after replacing $A$ with a multiple, its descent to $X$ extends to a line bundle $\calA$ on $\calX$.  If $p: \calX' \rightarrow \calX$ is the simultaneous resolution, then
we have by construction that $p^*\calA = A$ over the generic point of $\Delta'$.  Since $\calX' \rightarrow \Delta'$ is smooth, these line bundles must agree everywhere.
Therefore
$$\deg_C A = \deg_C p^*\calA = 0$$
since $p$ contracts $C$ to a point.
This gives a contradiction, so there are no exceptional curves and $g$ is smooth.
\end{proof}

If we return to the statement of Theorem \ref{ss-degeneration}, the last thing to check is that if we extend $L$ (uniquely) to a line bundle $L'$ on $\calX$, then $L'$ is a quasipolarization.  However, we already have a nef $\QQ$-divisor $H$ on $\calX$ that is $\QQ$-linearly equivalent to $\gamma L$ with $\gamma > 0$ when we restrict to $\calX_K = X$.  Therefore, 
$$H \sim_{\QQ} \gamma L'$$ 
on $\calX$
and $L'$ is nef.  Bigness of $L'$ is automatic by upper semicontinuity.  This completes the proof.

\section{Positivity of Hodge bundle}\label{Hodgepositivity}

Given a family of K3 surfaces $\pi: X \rightarrow S$, recall the Hodge bundle on $S$ is the line bundle
$$\lambda_S = \pi_*(\omega_{X/S})\in \mathrm{Pic}(S).$$
If it is clear from context, we will suppress the subscript.
In this section and the next, we prove a positivity result for the Hodge bundle in characteristic $p$.

\begin{thm}\label{amplehodge}
Given $p \geq 5$ and  $d$ such that $p \nodivide 2d$.
Given any map
$g: C \rightarrow \M_{2d,\FF_p}$ from a smooth proper curve $C$ such that
\begin{enumerate}
\item $C$ is not contracted in the map to the coarse moduli space and
\item $C$ meets the polarized locus $\M^{\circ}_{2d,\FF_p}$ nontrivially,
\end{enumerate} 
the pullback of the Hodge bundle on $\M_{2d,\FF_p}$ to $C$ has positive degree.
Furthermore, the restriction of the Hodge bundle to $\M^{\circ}_{2d,\FF_p}$ is ample.
\end{thm}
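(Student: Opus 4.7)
The plan is to deduce positivity of $\lambda$ on $\M_{2d,\FF_p}$ from the classical ampleness of the Hodge bundle on a Siegel modular variety, transported via a Kuga--Satake morphism constructed in mixed characteristic. Since $p \geq 5$, we may fix $n = 4$ and work on the level cover $\M_{2d,n}$ of Section 2.3. The forgetful map $\M_{2d,n,\FF_p} \to \M_{2d,\FF_p}$ is finite and \'etale, so ampleness of $\lambda$ on $\M^{\circ}_{2d,\FF_p}$ is equivalent to ampleness of its pullback to $\M^{\circ}_{2d,n,\FF_p}$, and positivity of $g^*\lambda$ pulls back along any connected component of $C \times_{\M_{2d,\FF_p}} \M_{2d,n,\FF_p}$. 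We are thereby reduced to proving both claims on $\M_{2d,n,\FF_p}$.

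I would then construct a Kuga--Satake morphism
$$\KS : \M_{2d,n,\ZZ[1/2dn]} \longrightarrow \A$$
to a moduli of polarized abelian schemes of dimension $2^{19}$ carrying a $\Cl_+(L_{2d})$-action and a compatible level structure. Over $\CC$ this is the classical construction: the Hodge structure on $P^2_{\DR}(X)\{1\}$ induces a weight-one Hodge structure on $\Cl_+(P^2_{\DR}(X)\{1\})$, yielding the Kuga--Satake complex torus, polarized via a distinguished element of $\Cl_+(L_{2d})$. To integrate this over $\ZZ[1/2dn]$ I would realize both source and target as \'etale over the integral canonical model of the $\mathrm{CSpin}(V_{2d})$-Shimura variety in the sense of Kisin, with the spin level structure of Section 2.3 chosen precisely to match a level structure on the target. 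The hypothesis $p \nmid 2d$ keeps the target smooth at $p$.

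The essential input, established in Section 6 using the appendix's compatibility between the Clifford functor and the integral $p$-adic comparison functors, is that $\KS$ is \emph{quasifinite} on all of $\M_{2d,n,\ZZ[1/2dn]}$, including its characteristic-$p$ fibers. Granting this, the Hodge bundles are related as follows: by Example \ref{clifford-dR}, the line $\Fil^1 \subset \Cl_+(P^2_{\DR}(f)\{1\})$ is locally generated by products $\omega \cdot \prod \gamma_i$ with $\omega$ a generator of $\calF^2 \subset P^2_{\DR}(f)$, so it is a positive tensor power of $\lambda$ up to a locally free twist coming from the orthogonal Clifford factors. On the other hand, $\Cl_+(P^2_{\DR}(f)\{1\})$ is (a twist of) the first de Rham cohomology of the Kuga--Satake abelian scheme $\KS^*\calAhat$, and the Clifford $\Fil^1$ is its Hodge filtration step, so $\Fil^1 \cong \KS^* \lambda_{\A}$. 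Therefore
$$\KS^* \lambda_{\A} \cong \lambda^{\otimes c}$$
on $\M_{2d,n}$ for some positive integer $c$.

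The classical ampleness of $\lambda_{\A}$ on $\A$ now gives the theorem. Since $\KS$ is quasifinite everywhere, $\KS^* \lambda_{\A}$ is big and nef on $\M_{2d,n,\FF_p}$; on the polarized open $\M^{\circ}_{2d,n,\FF_p}$, where separatedness is automatic (polarized K3s have no $-2$-curve flops) and $\KS$ is in fact finite by a valuative argument, the pullback is outright ample, proving (ii). For (i), non-contraction of $C$ in the coarse moduli forces $\KS \circ \tilde g$ to be nonconstant on the level cover $\tilde C \to C$, so $\deg(\KS \circ \tilde g)^* \lambda_{\A} > 0$, whence $\deg g^* \lambda > 0$. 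The principal obstacle is the quasifiniteness of $\KS$ at characteristic-$p$ points: classical Torelli is unavailable, and one must instead recover the formal deformation of a K3 surface from the filtered $F$-crystal of its Kuga--Satake abelian variety, using Fontaine--Laffaille and Breuil--Kisin theory as carried out in Section 6.
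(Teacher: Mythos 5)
Your overall strategy coincides with the paper's: pass to the spin level cover, map to a moduli of abelian varieties by Kuga--Satake, quote Faltings--Chai for ampleness of $\lambda_{\A}$, and reduce everything to quasifiniteness of the Kuga--Satake map plus a comparison of Hodge bundles. But two of your steps have genuine gaps. First, you propose to construct the integral Kuga--Satake morphism by realizing both $\M_{2d,n}$ and the Siegel space as \'etale over Kisin's integral canonical model of the $\mathrm{CSpin}(V_{2d})$-Shimura variety. That the K3 moduli space is \'etale over the integral canonical model at $p$ is not something you may simply assert: it is essentially the main theorem of the Madapusi Pera circle of ideas, and the paper deliberately avoids this technology (see the remark after Theorem \ref{amplehodge}), instead spreading out the characteristic-zero morphism over $\calO_E[1/2d]$ using the Andr\'e--Rizov extension results recorded in Proposition \ref{kugasatakerizov}(iv). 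Relatedly, you claim quasifiniteness of $\kappa$ on all of $\M_{2d,n}$ in characteristic $p$; Section \ref{qfks} only establishes it on the polarized locus (Proposition \ref{quasifinite}), which is exactly why hypothesis (2) of the theorem is there. Your deduction of part (i) should therefore run: if $\kappa\circ g$ were constant, the image curve would lie in a single fiber of $\kappa$, but that image meets $\M^{\circ}_{2d,n,\kk}$ in an infinite set, contradicting quasifiniteness on the polarized locus. Your claim that $\kappa$ is \emph{finite} on $\M^{\circ}$ by a valuative argument is also unjustified (and unnecessary: separated plus quasifinite plus Zariski's Main Theorem suffices for ampleness of the pullback).

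Second, the Hodge-bundle comparison as you state it is incorrect. The subsheaf $\Fil^1 \subset \Cl_+(P^2_{\DR}(f)\{1\})$ has rank $2^{19}$, not $1$, so it cannot be isomorphic to the line bundle $\KS^*\lambda_{\A}$; the correct statement compares $\lambda_{\A}=\det\Fil^1$ with a power of $\lambda$, and the ``locally free twist coming from the orthogonal Clifford factors'' that you wave away is precisely the issue: one must show its determinant is trivial (or torsion). In the paper this is the content of Proposition \ref{hodgecompare}: via the exact sequence of parabolic representations in Lemma \ref{psp-reps} and the triviality of the automorphic bundle attached to $\det\Cl_+(V_{2d}\otimes\CC)$ (which uses that $\KK_n^{\mathrm{sp}}$ lies in $\mathrm{Spin}(V_{2d})(\mathbb{A}_f)$, a semisimple group), one gets $\kappa_E^*\lambda_{\A}^{\otimes a}=\lambda^{\otimes 2^{20}a}$ over $\CC$, then descends to $E$ by a torsion argument. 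Finally, since the relation is only proved in characteristic zero, one still needs to transport it to the special fiber; the paper does this with Lemma \ref{specialize-line}, using smoothness of $\M_{2d,n,W}$ over $W$. Your proposal skips all three of these steps, and the first two are not formalities.
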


\begin{remark} The condition on meeting the polarized locus should not be essential, but is a technical condition that makes the arguments in the next section simpler.  If one adapts the techniques there to apply for algebraic spaces instead of schemes, the theorem can be strengthened to show ampleness on any separated substack.
\end{remark}

In characteristic zero, these results are well-known via Hodge theory.  Unfortunately, we do not know a similar argument in finite characteristic, so the proof we give here is extremely indirect.  A more geometric
approach to this question would be very interesting.  

\begin{remark}  It is possible to shorten the proof given here using the theory of integral canonical models of Shimura varieties in Section \ref{qfks}, along the lines of the unpublished manuscript of Vasiu \cite{vasiu}.  Since we prefer not to use that technology (and find the arguments there difficult to follow), we provide a longer argument here.
\end{remark}

Let us state a couple of immediate geometric corollaries of Theorem \ref{amplehodge}.
\begin{cor}
The moduli space $\M^{\circ}_{2d,\FF_p}$ is a quasiprojective Deligne--Mumford stack, when $p \geq 5$ and $p \nodivide 2d$.
\end{cor}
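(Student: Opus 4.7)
The plan is to deduce quasiprojectivity as a direct consequence of the ampleness of the Hodge bundle (Theorem~\ref{amplehodge}) together with standard descent to the coarse moduli space. By the proposition in Section~2, $\M^{\circ}_{2d,\FF_p}$ is a Deligne--Mumford stack of finite type over $\FF_p$. The first thing I would verify is separatedness: although $\M_{2d}$ itself is not separated (as explicitly noted, because of flops of $(-2)$-curves among quasipolarized models), this pathology does not arise in the polarized locus. Given two polarized families over a discrete valuation ring with isomorphic generic fibers, one can use that $L^{\otimes 3}$ is very ample (Saint-Donat, invoked in Section~2) to realize both families as the \emph{same} closed subscheme of a projective space over the DVR, forcing the isomorphism to extend uniquely. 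The valuative criterion therefore holds on $\M^{\circ}_{2d,\FF_p}$.

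Next, by the Keel--Mori theorem, $\M^{\circ}_{2d,\FF_p}$ admits a coarse moduli space $\pi : \M^{\circ}_{2d,\FF_p} \to M$ with $M$ a separated algebraic space of finite type over $\FF_p$. After replacing the Hodge bundle $\lambda$ by a sufficiently divisible power $N$ to trivialize it on every stabilizer group, $\lambda^{\otimes N}$ descends to a line bundle $\lambda_M$ on $M$ with $\pi^{*}\lambda_M \cong \lambda^{\otimes N}$. The ampleness of $\lambda$ on the stack asserted by Theorem~\ref{amplehodge} passes to ampleness of $\lambda_M$: any proper curve in $M$ lifts, after a finite cover, to a proper curve in the stack on which $\lambda$ has positive degree (using the stack-versus-space criterion for ampleness and finiteness of stabilizers on a DM stack).

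A separated algebraic space of finite type over a field that carries an ample line bundle is automatically a quasiprojective scheme, so $M$ is quasiprojective, and hence $\M^{\circ}_{2d,\FF_p}$ is a quasiprojective Deligne--Mumford stack in the standard sense that its coarse moduli space is quasiprojective. The only real obstacle is the passage from ampleness on the stack to ampleness on $M$, but this is purely a formal descent argument after replacing $\lambda$ by a power; all the geometric content has already been placed in Theorem~\ref{amplehodge}.
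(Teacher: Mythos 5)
Your overall strategy coincides with the paper's: the corollary is stated there with no separate proof, as an immediate consequence of the ampleness assertion in Theorem~\ref{amplehodge}, and the bookkeeping you supply (separatedness of the polarized locus, Keel--Mori, descent of a power of $\lambda$ to the coarse space) is the standard way to make that deduction precise. Two of your intermediate steps, however, do not work as written. The passage from ampleness on the stack to ampleness of $\lambda_M$ on the coarse space cannot be run through degrees on proper curves: a line bundle of positive degree on every proper curve need not be ample, and on a non-proper space there is no Nakai/Kleiman-type criterion to appeal to. The clean route is the one already implicit in the proof of Theorem~\ref{amplehodge}: ampleness is established there on the level cover $\M^{\circ}_{2d,n,\kk}$, an algebraic space on which $\kappa_{\kk}$ is shown, via Zariski's main theorem, to factor as an open immersion followed by a finite morphism to the quasiprojective scheme $\A_{g,d',n,\kk}$; hence the level cover is itself a quasiprojective scheme with $\lambda$ ample. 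Since $\M^{\circ}_{2d,n}\rightarrow\M^{\circ}_{2d}$ is finite \'etale, the induced map from this quasiprojective scheme to the coarse space of $\M^{\circ}_{2d,\FF_p}$ is finite and surjective, a power of $\lambda$ descends to the coarse space, and ampleness descends along finite surjective morphisms of Noetherian schemes; base change between $\FF_p$ and $\overline{\FF}_p$ is harmless because ampleness may be checked after faithfully flat base field extension.

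The separatedness step is also glib at the decisive point. Embedding both families into $\PP^N$ over the DVR by $L^{\otimes 3}$ does not by itself ``realize both families as the same closed subscheme'': the isomorphism of generic fibres, compatible with the polarizations, only produces an element of $\mathrm{PGL}_{N+1}(K)$, and the whole content of separatedness is that this element extends over the valuation ring, equivalently that the specialization of the graph of the isomorphism is again the graph of an isomorphism. That is the Matsusaka--Mumford argument, using that K3 surfaces are not ruled (together with finiteness of polarized automorphisms and $H^0(T_X)=0$ for unramifiedness of the Isom scheme). Separatedness of the polarized locus is indeed true and is used implicitly in the paper when it asserts that $\kappa_{\kk}$ is separated on $\M^{\circ}_{2d,n,\kk}$, but it should be invoked (or proved by Matsusaka--Mumford) rather than derived in one line from very ampleness of $L^{\otimes 3}$.
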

The following corollary follows from Theorem $15.3$ in \cite{vandergeer-katsura}:
\begin{cor}
Given a family $f: \calX \rightarrow B$ of polarized K3 surfaces of degree $2d$ with $p \geq 5$ and $p \nodivide 2d$, such that $B$ is proper and $f$ is not isotrivial, either
\begin{enumerate}
\item the height of the fibers $\calX_t$ are not constant, or
\item all fibers $\calX_t$ are supersingular.
\end{enumerate}
\end{cor}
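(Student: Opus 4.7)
The plan is to argue by contradiction: suppose neither alternative holds, so the height $h$ of the fibers is constant and finite (i.e.\ at most some finite value shared by every fiber). The goal is to combine the cited result of van der Geer--Katsura on the triviality of a power of the Hodge bundle on the finite-height strata with the ampleness statement of Theorem \ref{amplehodge} to derive a contradiction.

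First, since $f : \calX \to B$ is a polarized (not merely quasipolarized) family, the classifying morphism $B \to \M_{2d,\FF_p}$ factors through the open substack $\M^{\circ}_{2d,\FF_p}$. Because $f$ is not isotrivial, this morphism does not contract $B$ to a point in the coarse moduli space; therefore, by cutting with a very ample divisor and using Bertini, we can find a smooth proper curve $C \subset B$ whose image in the coarse moduli space is not a point. The restriction of $f$ to $C$ satisfies the two hypotheses of Theorem \ref{amplehodge}, so
\[
\deg(\lambda|_C) > 0.
\]

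On the other hand, the hypothesis that the height is constantly equal to some finite integer $h$ means the map $C \to \M^{\circ}_{2d,\FF_p}$ factors through the height-$h$ stratum. Theorem~15.3 of van der Geer--Katsura asserts that on this locus some positive power of the Hodge bundle (explicitly, $\lambda^{p^h - 1}$) is trivial; in particular the pullback of $\lambda$ to $C$ is a torsion line bundle and hence satisfies $\deg(\lambda|_C) = 0$. This contradicts the positivity just obtained, completing the proof.

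The only real input beyond Theorem \ref{amplehodge} is the ability to extract a non-contracted curve through the polarized locus on which to apply it; that uses only that $B$ is proper, that the family is polarized (so $C$ lands in $\M^{\circ}_{2d,\FF_p}$), and non-isotriviality (so the moduli map is non-constant, hence the general complete intersection curve in $B$ is not contracted). No technical obstacle is anticipated; the argument is a direct comparison of two degree computations on $C$.
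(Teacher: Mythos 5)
Your argument is correct and matches the paper's intended proof: the corollary is stated as a consequence of Theorem~15.3 of van der Geer--Katsura (triviality of a power of $\lambda$ on the finite-height stratum) combined with the positivity of $\lambda$ from Theorem~\ref{amplehodge}, which is exactly the contradiction you set up. The only cosmetic point is that instead of invoking Bertini on a possibly non-projective, possibly singular $B$, it is cleaner to pick any irreducible curve in $B$ not contracted by the moduli map and pass to its normalization to get the smooth proper curve required by Theorem~\ref{amplehodge}.
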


The strategy of the proof of Theorem \ref{amplehodge} is as follows.  
Given a family of abelian varieties $$\pi: A \rightarrow S,$$ 
the (determinant) abelian Hodge bundle on $S$ is the line bundle
$\lambda_{\A,S}= \pi_*(\omega_{A/S}).$
Again, we will suppress the subscript for the base $S$ if clear from context.
In arbitrary characteristic, we know from \cite{faltings-chai} that the abelian Hodge bundle is ample on the moduli space 
of abelian varieties.

In order to apply this result to our situation, we use the Kuga--Satake construction to define a map between the moduli spaces of K3 surfaces and abelian varieties.  While a priori this is a transcendental construction, it follows from work of Deligne, Andr\'e, Rizov, and Vasiu, that it behaves well in families in mixed characteristic (under some hypotheses).
In this section, we study this map and show that it preserves Hodge bundles, up to a multiple. This will prove Theorem \ref{amplehodge}, assuming the Kuga--Satake map is quasifinite.  We will prove this quasifiniteness in the next section.

\subsection{Kuga--Satake over $\CC$}\label{kugasatakecc}

We first briefly recall the Kuga--Satake construction for a K3 surface $X$ over $\CC$, equipped with a quasipolarization $L$ of degree $2d$, following the description in \cite{deligne-weil} and \cite{huybrechts}.

For ease of notation, we identify the integral primitive cohomology of $(X,L)$ with the lattice $L_{2d}$ from Section \ref{cliffordsection}.  It carries a polarized weight $2$ Hodge structure, determined by the choice of line
$$F^2 = \CC\omega \subset L_{2d}\otimes \CC$$
such that $\langle \omega,\omega\rangle = 0$.

To construct the associated Kuga--Satake abelian variety as a complex torus, we take the free $\ZZ$-module $\Cl_+(L_{2d})$ and put a complex structure on 
the real vector space $$\Cl_+(L_{2d})\otimes \RR = \Cl_+(L_{2d}\otimes \RR).$$  
For this, we choose the generator $\omega$ of $F^2$ such that $\langle \omega, \overline{\omega}\rangle = -2$, where  $\overline{\omega}$ denotes complex conjugation.  If we write $\omega = x+iy$, where $x,y \in L_{2d}\otimes \RR$, then the complex structure $J$ on 
$\Cl_+(L_{2d}\otimes \RR)$ is given by the operator of left multiplication by $x\cdot y$, with respect to the Clifford algebra structure.

To show that this complex torus is in fact an abelian variety, we define a polarization on the weight $1$ Hodge structure as follows.  Given the hyperbolic lattice $U$ with standard basis $e$, $f$, the element $v = e-f$ has self-intersection $\langle v,v\rangle = -2$.  In our explicit presentation of $L_{2d}$ from Section \ref{cliffordsection}, define $v_1,v_2 \in L_{2d}$ by taking the corresponding element in the second and third copy of $U$ in equation \eqref{k3lattice}.  
We define a skew-symmetric pairing on $\Cl_+(L_{2d})$ by the formula
$$\langle x, y \rangle  = \mathrm{Tr}(x^* y v_1 v_2) $$
where $\mathrm{Tr}$ is the trace of the operator of left multiplication on $\Cl(L_{2d})$.
It is proven in \cite{huybrechts} that (depending on the orientation of $\omega$), either $\langle, \rangle$ or $-\langle,\rangle$ defines a polarization.

In either case, we have an abelian variety of dimension $2^{19}$.  By calculation, one can show that the explicit polarization we have given has degree $(d')^2$ with 
$d' =  2^{3\cdot 2^{18}}\cdot d^{2^{19}}.$  All that matters for us is that it is only divisible by primes dividing $2d$.

Finally, if we define the finitely generated $\ZZ$-algebra
$$C^{+} = \Cl_{+}(L_{2d})^{\mathrm{op}},$$
it is clear that we have an action of $C^+$ on 
$\Cl_{+}(L_{2d})$ by right multiplication that preserves the complex structure just defined.  Therefore, the Kuga--Satake abelian varieties 
carry an action of $C^+$.

\subsection{Field of definition}

From now on, we restrict to level $n=4$.  In the next section, we will explain results of Andr\'e and Rizov regarding 
Kuga--Satake morphisms
in families over mixed-characteristic base.  See also \cite{vasiu} for related constructions.
To apply their results, we need to work over an appropriate number field $E$ satisfying certain properties. We study this
field extension in this section.

Recall from Section \ref{cliffordsection} the norm map $\mathrm{Nm}: \mathrm{CSpin}(V_{2d})\rightarrow \mathbb{G}_m$ and consider the image of
the compact open subgroup $\KK_n \subset \mathrm{CSpin}(V_{2d})(\mathbb{A}_f)$:
$$\mathrm{Nm}(\KK_{n}) \subset (\ZZhat)^*.$$

We claim that it contains a congruence subgroup $$U_m = \{a \in \ZZhat^*| a \equiv 1\bmod m\}$$ for some $m$ only divisible by prime factors of $2nd=8d$.  
Indeed, since $\KK_n$ is compact open, so is its image under $\mathrm{Nm}$.   
For each prime $l$ not dividing $2d$, $$\Cl_+(L_{2d}\otimes \ZZ_l)^*\cap \mathrm{CSpin}(V_{2d}\otimes \QQ_l)$$ is smooth over $\ZZ_l$ and so surjects onto $\ZZ_l^*$ (using Hensel's lemma and the surjection on $\FF_l$-points).  

We require a finite extension $E$ of $\QQ(\zeta_m)$ such that every connected component of
$\M_{2d, n, E}$ is geometrically connected.

\begin{lem}
There exists a finite extension $E/\QQ$ as above that is unramified away from primes dividing $2d$.
\end{lem}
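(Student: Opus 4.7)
The plan is to construct $E$ as the compositum $E = E_0\cdot\QQ(\zeta_m)$, where $E_0$ is defined to be the field of definition of the geometric connected components of $\M_{2d,n,\QQ}$. Concretely, $\mathrm{Gal}(\overline{\QQ}/\QQ)$ acts on the finite set $\pi_0(\M_{2d,n,\overline{\QQ}})$ (finite since $\M_{2d,n}$ is of finite type over $\ZZ[1/2dn]$), and $E_0$ is the fixed field of the kernel of this action. With this choice, every geometric connected component of $\M_{2d,n,\overline{\QQ}}$ descends to $E_0$, hence to $E$, so each connected component of $\M_{2d,n,E}$ is geometrically connected. Tautologically $E\supseteq \QQ(\zeta_m)$, so only the ramification statement remains.

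The main point is that $E_0/\QQ$ is unramified outside primes dividing $2dn=8d$. For this, I would argue that, because $\M_{2d,n}\to\Spec\ZZ[1/2dn]$ is smooth of finite type, the étale sheaf on $\Spec\ZZ[1/2dn]$ of geometric connected components of fibers is representable by a finite étale cover $T\to\Spec\ZZ[1/2dn]$. Equivalently, each geometric component of the generic fiber admits a model (necessarily smooth and normal) over some localization of the ring of integers of the corresponding number field of definition, and these models glue to a finite étale algebra over $\ZZ[1/2dn]$. The geometric generic fiber $T_{\bar\eta}$ then recovers $\pi_0(\M_{2d,n,\overline{\QQ}})$ together with its Galois action, which therefore factors through $\pi_1^{\et}(\Spec\ZZ[1/2dn])$, i.e., through the Galois group of the maximal extension of $\QQ$ unramified outside $2dn=8d$.

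Consequently $E_0$ is contained in such an extension, so it is unramified outside primes dividing $8d$, i.e., outside primes dividing $2d$. On the other hand, since $m$ is (by construction) divisible only by primes dividing $8d$, the cyclotomic field $\QQ(\zeta_m)$ is unramified outside primes dividing $2d$ as well. Hence the compositum $E = E_0\cdot\QQ(\zeta_m)$ is a finite extension of $\QQ$ unramified outside primes dividing $2d$, completing the proof.

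The main obstacle is the spread-out step: that $\pi_0$ of the smooth (not proper) morphism $\M_{2d,n}\to\Spec\ZZ[1/2dn]$ is represented by a finite étale cover, rather than merely being a constructible sheaf. One argues by reducing to the coarse moduli space (which is a normal, flat algebraic space of finite type over the Dedekind base $\Spec\ZZ[1/2dn]$), noting that geometric generic components spread to open-and-closed subschemes by smoothness and normality, and checking that the resulting relative $\pi_0$ is étale. An alternative, more structural route—avoiding the bare spread-out—would be to appeal to the double-coset description of $\pi_0(\M_{2d,n,\CC})$ as a quotient of $\ZZhat^{*}/\mathrm{Nm}(\KK_n)$ arising from the Shimura variety perspective; together with the earlier observation that $\mathrm{Nm}(\KK_n)\supseteq U_m$, this realizes $\pi_0$ explicitly as a quotient of $(\ZZ/m\ZZ)^{*}$, and class field theory then identifies the Galois action with one factoring through $\mathrm{Gal}(\QQ(\zeta_m)/\QQ)$, which is unramified outside primes of $m\mid 8d$.
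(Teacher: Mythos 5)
Your overall strategy (take $E_0$ to be the field cut out by the Galois action on $\pi_0(\M_{2d,n,\overline{\QQ}})$ and bound its ramification using the integral model) is genuinely different from the paper's, but the step you yourself flag as ``the main obstacle'' is a real gap, and the justification you sketch for it does not work. Smoothness of the non-proper morphism $\M_{2d,n}\rightarrow \Spec \ZZ[1/2dn]$ does \emph{not} imply that the relative $\pi_0$ is a \emph{finite} \'etale cover of the base: for a smooth finite-type morphism over a Dedekind base the number of geometric components of fibers can jump, and, worse for your purposes, a connected component of the generic fiber may fail to meet a given special fiber at all, in which case its field of constants can be ramified there. For instance $\Spec\bigl(\ZZ_p[x,1/x]/(x^2-p)\bigr)$ is smooth and of finite type over $\ZZ_p$, yet the Galois action on the $\pi_0$ of its geometric generic fiber is ramified at $p$. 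So the assertion ``geometric generic components spread to open-and-closed subschemes by smoothness and normality'' does not by itself rule out exactly the failure mode you need to rule out, and the phrase ``a model over some localization of the ring of integers'' is circular unless you control which primes are inverted. What is actually needed, and is provable with a little more work, is that \emph{every} connected component of the integral model $\M_{2d,n}$ has nonempty fiber over every prime $p\nmid 2dn$: each such component maps finite \'etale, hence with open and closed image, to $\M_{2d,\ZZ[1/2dn]}$, which is connected (its components are flat over the base and its generic fiber is geometrically connected) and has nonempty fibers in every good characteristic; given that, a local argument at a closed point of the special fiber (the complete local ring is $W(\kappa)[[t_1,\dots,t_r]]$ by smoothness, and no extension of $\QQ_p$ ramified at $p$ embeds into its fraction field) shows the field of constants of each component is unramified at $p$. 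None of this is in your write-up, so as it stands the main route is incomplete; your ``alternative route'' via the ad\`elic description of $\pi_0$ and the reciprocity law is viable but is essentially the Andr\'e--Rizov machinery and additionally needs the identification of $\pi_0(\M_{2d,n,\CC})$ with $\pi_0$ of the Shimura variety (i.e.\ maximality of the monodromy of the universal family), which you do not address.

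By contrast, the paper's proof is much more elementary and avoids all statements about components of integral models: since $\M_{2d,\QQ}$ is geometrically connected, it suffices to exhibit a single $E$-point of $\M_{2d,E}$ over which the finite \'etale cover $\M_{2d,n,E}\rightarrow\M_{2d,E}$ splits completely, i.e.\ a quasipolarized K3 surface over $E$ all of whose spin level $n$ structures are defined over $E$. The paper takes the Fermat quartic over $E'=\QQ(\zeta_8)$, which is elliptic with section (hence carries quasipolarizations of every degree) and extends to a smooth family over $\calO_{E'}[1/2d]$; the field generated by $H^2_{\et}(X_{\overline{\QQ}},\ZZ/N\ZZ(1))$, with $N$ divisible only by primes dividing $2d$, is then unramified away from $2d$ by good reduction, and one takes $E$ inside that field (together with $\QQ(\zeta_m)$). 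If you want to salvage your approach, supply the surjectivity-of-components argument and the local unramifiedness lemma indicated above; otherwise the rational-point argument of the paper is the shorter path.
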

\begin{proof}

Since $\M_{2d,\QQ}$ is geometrically connected, to show that every connected component of 
the finite \'etale cover $\pi: \M_{2d,n,E} \rightarrow \M_{2d,E}$ is geometrically connected (for some choice of $E$),
it suffices to find an $E$-point, $$\mathrm{Spec} E \rightarrow \M_{2d,E},$$ such that the pullback of $\pi$ is a trivial Galois cover.  
Equivalently, we need to find a quasipolarized K3 surface over $E$ such that all spin level $n$ structures on $X$ are defined over $E$.  
In order to do this, choose $N$ divisible only by the primes dividing $2d$ such that $$\KK^{ad}_{n} \supset \{ g \in \mathrm{Aut}(L \otimes \ZZhat)| g \equiv 1 \bmod N\}.$$
We want to find a number field $E$ and a quasipolarized K3 surface over $E$ such that $\mathrm{Gal}(\overline{\QQ}/E)$ acts trivially
on $H^{2}_{\et}(X_{\overline{\QQ}},\ZZ/N\ZZ(1))$.  

Again, it suffices to find a number field $E'$, unramified over $\QQ$ away from $2d$,  and a family of K3 surfaces  over $\mathcal{O}_{E'}[1/2d]$ with a quasipolarization of degree $2d$.  If we have this, the extension of $E'$ defined by $H^{2}_{\et}(X_{\overline{\QQ}},\ZZ/N\ZZ(1))$ will necessarily be unramified away from $2d$.  For this, we can take the Fermat quartic over $E' = \QQ(\zeta_8)$ (over which all line bundles are defined).  It has the structure of an elliptic fibration with section by Lemma $12.2.2$ of \cite{schuett-shioda}, so it has quasipolarizations of every degree, all defined over $E'$, and good reduction away from $2$.
\end{proof}

\subsection{Kuga--Satake construction in families}

In what follows, let
$$\A_{g,d',n}$$
denote the moduli stack of abelian varieties of dimension $g$ with a polarization of degree $d'^2$ and a level $n$ structure, where $d'$ is the polarization degree defined in Section \ref{kugasatakecc}.
It is a smooth quasiprojective scheme over $\ZZ[1/d'n]$ (see (\cite{mumford}, 7.9), and (\cite{oort}, 2.4.1)).

We can now state slightly modified versions of Theorem $8.4.3$ from \cite{andre} and Lemma $5.5.5$ and Theorem $6.2.1$ from \cite{rizov-total}, which allow us to extend the Kuga--Satake construction to families over mixed-characteristic base.

\begin{prop}\label{kugasatakerizov}
\begin{enumerate}
\item
There exists a morphism
$$\kappa_E: \M_{2d, n, E} \rightarrow \A_{g,d',n,E}$$
which on $\CC$-valued points sends a polarized K3 surface to its associated Kuga--Satake variety.

\item
If $\pi: \calA_\kappa \rightarrow \M_{2d, n, E}$ denotes the abelian scheme given by pulling back the universal family over
$\A_{g,d',n}$, then there exists an isomorphism
$$C^+ \rightarrow \mathrm{End}_{\M_{2d,n,E}}(\calA_\kappa)$$
of algebras.
Here $C^+$ is the algebra defined in the last section.

\item
There exists a unique isomorphism of \'etale sheaves of algebras
$$\Cl_+(\Pet(\ZZ_l(1))) \simrightarrow \mathrm{End}_{C^{+}}(\Ret^1\pi_*(\ZZ_l))$$
where the left-hand side denotes the Clifford construction for the universal family of K3 surfaces  $f: X \rightarrow \M_{2d,n,E}$
and the right-hand side denotes the sheaf of endomorphisms of $\Ret^1\pi_*(\ZZ_l)$ that commute with the action of $C^+$.

\item
The morphism $\kappa_E$ extends to a morphism
$$\kappa: \M_{2d, n, \calO_E[1/2d]} \rightarrow \A_{g,d',n,\calO_E[1/2d]}.$$
\end{enumerate}
\end{prop}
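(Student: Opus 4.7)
The plan is to realize $\M_{2d,n,E}$ as an open substack of an orthogonal-type Shimura variety, construct $\kappa_E$ as a morphism of Shimura varieties via Deligne's canonical model theory, derive parts (2)--(3) from the tautological Clifford identities at a single $\CC$-point propagated by rigidity, and finally extend the morphism integrally using N\'eron--Ogg--Shafarevich.

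For part (1), I would first identify each geometric connected component of $\M_{2d,n,\CC}$ with an open substack of a connected component of the Shimura variety with datum $(\mathrm{CSpin}(V_{2d}), \Omega^{\pm})$ and level $\KK_n$, via the period map. The transcendental Kuga--Satake construction recalled in Section \ref{kugasatakecc} globalizes to a morphism of Shimura data into the Siegel datum $(\mathrm{GSp}(\Cl_+(V_{2d})), \mathcal{H}^{\pm})$ with level $\KK'$ compatible with $\KK_n$. Both reflex fields are $\QQ$, so canonical model theory produces an algebraic morphism over $\QQ$; since $E$ was chosen to contain $\QQ(\zeta_m)$ and to geometrically split the connected components, this descends to the asserted $\kappa_E$ over $E$ with the stated behavior on $\CC$-points.

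For parts (2) and (3), at a single $\CC$-point the right multiplication action of $C^+ = \Cl_+(L_{2d})^{\mathrm{op}}$ on $\Cl_+(L_{2d}) = H^1(\calA_\kappa,\ZZ)$ commutes with the Hodge structure and is a maximal order in the endomorphism algebra, identifying $C^+ \simrightarrow \End(\calA_\kappa)$; dually, $\End_{C^+}(\Cl_+(L_{2d})) \cong \Cl_+(L_{2d})$ acting by left multiplication, which matches the Clifford algebra of the primitive Betti cohomology acting on $H^1$. These Betti-level identities propagate by rigidity of endomorphisms in connected families of abelian schemes to give (2), and pass to $l$-adic \'etale cohomology via Artin comparison to give (3).

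For part (4), I would extend $\kappa_E$ across each trait of $\Spec \calO_E[1/2d]$ using N\'eron--Ogg--Shafarevich. At a closed point $v$ with residue characteristic $p \nodivide 2d$, choose any prime $l$ invertible in the residue field; the universal K3 surface has good reduction at $v$, so $\Pet(\ZZ_l(1))$ is unramified there, and hence so is the $l$-adic Tate module of the Kuga--Satake abelian scheme by the identification in (3). N\'eron--Ogg--Shafarevich then gives good reduction of $\calA_\kappa$ at $v$, and the principal polarization and $C^+$-action extend uniquely by Matsusaka--Mumford style rigidity. Gluing these local extensions through the functoriality of the moduli problem $\A_{g,d',n}$ produces the desired global $\kappa$ over $\calO_E[1/2d]$. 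The main obstacle will be part (4): ensuring that the degree-$d'^2$ polarization and the level-$n$ structure on the Kuga--Satake abelian scheme extend compatibly (not just the underlying abelian variety), which is precisely why primes dividing $2d$ must be inverted in the base and why the choice of level $n=4$ interacts well with the residue characteristics $p\geq 5$ under consideration.
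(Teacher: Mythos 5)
Your overall strategy (period map into an orthogonal/spin Shimura variety, Kuga--Satake as a morphism to a Siegel modular variety via canonical models, then an integral extension via N\'eron--Ogg--Shafarevich) is the same circle of ideas as the paper, which for the polarized case simply cites Andr\'e and Rizov. But there are concrete gaps. First, you never address that $\M_{2d,n}$ parametrizes \emph{quasipolarized} K3 surfaces: the period map is an open immersion only on the polarized locus, and the cited family Kuga--Satake results are stated for the polarized case. The actual content of the paper's proof is exactly this extension: over $\CC$ a quasipolarization still gives a polarized weight-$2$ variation of K3 type, so the period morphism $j_{2d,n}$ extends, and its descent to $\QQ$ (and hence $E$) follows because it descends on the Zariski-dense polarized locus; likewise (2) and (3) are deduced on all of $\M_{2d,n}$ from the polarized locus by density. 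Your appeal to ``canonical model theory'' only descends the morphism of Shimura varieties $\mathrm{CSpin}\to\mathrm{CSp}$; it does not by itself descend the period map from the moduli stack, nor does it produce the section of $\pi_{\mathrm{ad}}$ (which exists only after base change to $E$, where the components become geometrically connected) --- you fold both into one sentence without argument.

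Second, ``propagate by rigidity of endomorphisms in connected families'' does not prove (2)--(3): rigidity gives injectivity of the specialization map $\End_{\M}(\calA_\kappa)\to\End(\calA_{\kappa,s})$, not that an endomorphism of one fiber extends to the family, and it says nothing about the field of definition of the $C^{+}$-action or of the sheaf isomorphism $\Cl_+(\Pet(\ZZ_l(1)))\simrightarrow \End_{C^{+}}(\Ret^1\pi_*(\ZZ_l))$ over $E$; over $\CC$ the action comes from the construction at the level of variations of Hodge structure, and descent to $E$ is part of what Andr\'e--Rizov prove on the polarized locus. Third, in (4) your N\'eron--Ogg--Shafarevich argument ``trait by trait'' only extends the abelian scheme (with its polarization of degree $d'^2$, not principal, and level structure) over the codimension-one points of the mixed-characteristic stack $\M_{2d,n,\calO_E[1/2d]}$; extending over the higher-codimension locus is a separate step (the paper invokes the argument of Rizov's Theorem 6.2.1, which rests on Faltings--Chai-type extension results over a regular base), and your ``gluing through functoriality'' does not supply it.
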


The morphism $\kappa$ in the proposition is not canonical; there exist many choices satisfying the above conditions.
\begin{proof}

If we restrict to the moduli space of polarized K3 surfaces, this proposition is precisely what is shown in \cite{andre, rizov-total}.
The arguments given there easily extend to the quasipolarized case, as we now explain.

For part (i), the proof in the polarized case proceeds as follows.
Recall the algebraic groups $G = \mathrm{SO}(V_{2d})$ and $G_1 = \mathrm{CSpin}(V_{2d})$ over $\QQ$; if we take the symplectic rational vector space
$\calW = \Cl_+(V_{2d})$ with skew-symmetric pairing $\langle,\rangle$ from Section \ref{kugasatakecc}, we also have the algebraic group $\mathrm{CSp}(\calW)$ of symplectic transformations.  The adjoint
and spin representations define maps
$$\mathrm{ad}: G_1 \rightarrow G;\ \ \mathrm{sp}: G_1 \rightarrow \mathrm{CSp}(\calW).$$

Recall again the (disconnected) Hermitian symmetric domain 
$$\Omega^{\pm} = \{\omega \in V_{2d}\otimes \CC| \langle\omega, \omega\rangle = 0, \langle\omega,\overline{\omega}\rangle > 0\}.$$
There exists a unique Hermitian symmetric domain $\Omega_1^{\pm}$ for $G_1$ such that the adjoint map
extends to a map
$$(G_1, \Omega_1^{\pm}) \rightarrow (G, \Omega^{\pm})$$
of Shimura data, and so that 
$\Omega_1^{\pm} \rightarrow \Omega^{\pm} $ is an analytic isomorphism.
Finally, if $\mathcal{H}^{\pm}$ is the union of the Siegel upper and lower half-planes for the symplectic space $W$, we have a map of Shimura data
$$(G_1, \Omega_1^{\pm}) \rightarrow (\mathrm{CSp}(\calW), \mathcal{H}^{\pm}).$$
The associated Shimura varieties all have canonical models over $\QQ$, so we have maps of Shimura varieties
\begin{align}
\pi_{\mathrm{ad}}: \mathrm{Sh}_{\KK_{n}^{\mathrm{sp}}}(G_1, \Omega_1^{\pm}) \rightarrow \mathrm{Sh}_{\KK_{n}^{\mathrm{ad}}}(G, \Omega^{\pm}) \\
\pi_{\mathrm{sp}}:  \mathrm{Sh}_{\KK_{n}^{\mathrm{sp}}}(G_1, \Omega_1^{\pm})\rightarrow \mathrm{Sh}_{\Lambda_{n}}(\mathrm{CSp}(\calW), \mathcal{H}^{\pm})
\end{align}
where
$\Lambda_{n}$ is the congruence level-$n$ subgroup of $\mathrm{CSp}(\calW)(\mathbb{A}_f)$ associated to the integral lattice $\Cl_+(L_{2d})$ of $\calW$.
Finally, we can identify $\mathrm{Sh}_{\Lambda_{n}}(\mathrm{CSp}(\calW), \mathcal{H}^{\pm})$ with a connected component of $\A_{g,d',n,\QQ}$.

In \cite{rizov-total}, it is shown that, after base change to $E$ as in the last section, all connected components of the Shimura varieties for $G_1$ and $G$ are geometrically connected, and $\pi_{\mathrm{ad}}$ restricts to an isomorphism on each such component.  Therefore, after base change to $E$, there exists (after a choice) a section
$$\sigma: \mathrm{Sh}_{\KK_{n}^{\mathrm{ad}}}(G, \Omega^{\pm})_{E}\rightarrow \mathrm{Sh}_{\KK_{n}^{\mathrm{sp}}}(G_1, \Omega_1^{\pm})_{E}.$$

With all this structure in place, 
the proof of part (i) is to first construct a period morphism
$$j_{2d,n}: \M^{\circ}_{2d,n^{\mathrm{sp}},\QQ} \rightarrow  \mathrm{Sh}_{\KK_{n}^{\mathrm{ad}}}(G, \Omega^{\pm})$$
and then define 
$$\kappa = (\pi_{\mathrm{sp}}\otimes E)\circ \sigma \circ (j_{2d,n}\otimes E).$$

To extend this construction to the quasipolarized case, it suffices to extend the definition of $j_{2d,n}$.
After tensoring with $\CC$, the construction in \cite{rizov-total} of the map applies unchanged since a quasipolarized family still gives rise to a polarized variation of weight $2$ Hodge structures of K3 type.  Since there exists a Zariski dense open subset (namely the polarized locus) for which $j_{2d,n}$ descends to $\QQ$, the entire map descends as well.

For parts (ii) and (iii), once we know these conditions on a Zariski dense open subset of $\M_{2d,n}$, we know them on the total space.  

For part (iv), the argument in \cite{rizov-total} extends easily. 
Since $\M^{\circ}_{2d,n,\FF_p}$ is a Zariski dense open subset of $\M_{2d,n,\FF_p}$, we already know the abelian scheme (and auxiliary structure) extends over codimension one points.  To extend over all higher codimension points, the argument in Theorem $6.2.1$ applies without change.
\end{proof}

\subsection{Comparison of Hodge bundles}

In this section, we compare the Hodge bundle $\lambda$ on the moduli space of K3 surfaces with the pullback of the determinant abelian Hodge bundle $\lambda_{\A}$ on the moduli of abelian varieties, via the Kuga--Satake map $\kappa_E$.

\begin{prop}\label{hodgecompare}
There exists a positive integer $a$ such that
$$\kappa_{E}^*(\lambda_{\A}^{\otimes a}) = \lambda^{\otimes (2^{20}\cdot a)} \in \mathrm{Pic}(\M_{2d,n,E}).$$
\end{prop}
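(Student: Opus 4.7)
The plan is to reduce the comparison of Hodge bundles to a weight-theoretic computation on the spin representation of $\mathrm{CSpin}(V_{2d})$. By definition, $\kappa_E^*(\lambda_\A) = \det\pi_*\omega_{\calA_\kappa/\M_{2d,n,E}} = \det F^1 H^1_{\DR}(\calA_\kappa/\M_{2d,n,E})$, so the goal is to identify this determinant with $\lambda^{\otimes 2^{20}}$ modulo torsion in $\Pic(\M_{2d,n,E})$; once that is done, taking $a$ to be the order of the torsion class will yield the statement on the nose.

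First, I would establish a de Rham analog of Proposition \ref{kugasatakerizov}(iii), giving a canonical filtered isomorphism of $\calO\otimes C^+$-modules
$$\Cl_+(P^2_{\DR}(f)\{1\}) \simrightarrow \End_{C^+}\left(H^1_{\DR}(\calA_\kappa/\M_{2d,n,E})\right),$$
where the left-hand side carries the filtration of Example \ref{clifford-dR} and the right-hand side the one induced by the Hodge filtration on $H^1_{\DR}(\calA_\kappa)$. This can be deduced by base-changing to $\CC$, where it is implicit in Deligne's analytic Kuga--Satake construction, and then invoking GAGA together with the algebraicity of Hodge filtrations. Since the $C^+$-action makes $H^1_{\DR}(\calA_\kappa)$ locally free of rank one over $\Cl_+(P^2_{\DR}(f)\{1\})$, after passing to an \'etale cover of $\M_{2d,n,E}$ on which a generator exists, one obtains an isomorphism $H^1_{\DR}(\calA_\kappa) \cong \Cl_+(P^2_{\DR}(f)\{1\})\otimes L$ for some auxiliary line bundle $L$ that descends to a torsion class on $\M_{2d,n,E}$.

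Next, I would interpret both $\lambda$ and $\kappa_E^*\lambda_\A$ as automorphic line bundles on the $\mathrm{CSpin}$-Shimura variety $\mathrm{Sh}_{\KK_n^{\mathrm{sp}}}(\mathrm{CSpin}(V_{2d}),\Omega_1^\pm)_E$, using the section $\sigma$ from the proof of Proposition \ref{kugasatakerizov}. Such automorphic line bundles correspond to characters of the Hodge parabolic $P_\mu\subset \mathrm{CSpin}(V_{2d})$; under this dictionary, $\lambda$ corresponds to the top-weight character $\chi_V$ of the Hodge cocharacter $\mu$ on $V_{2d}$, while $\kappa_E^*\lambda_\A$ corresponds to the character $\chi_\A$ equal to the determinant of the weight-$1$ eigenspace of $\mu$ on the spin representation $\Cl_+(V_{2d})$. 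The weights of $\mu$ on $\Cl_+(V_{2d})$ are $0$ and $1$, each with multiplicity $2^{19}$, and a direct character computation (tracking the half-integer spin weights and the central normalization arising from the $\GG_m$-extension $\mathrm{CSpin}\to \mathrm{SO}$) gives $\chi_\A = 2^{20}\chi_V$ modulo characters of the center $Z(\mathrm{CSpin})\cong \GG_m$.

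Finally, characters of $Z(\mathrm{CSpin})$ correspond to torsion line bundles on the finite-level Shimura variety, since the center acts trivially on the Hermitian symmetric domain and its action factors through a finite quotient on $\mathrm{Sh}_{\KK_n^{\mathrm{sp}}}$; pulling back along the period and Kuga--Satake maps to $\M_{2d,n,E}$ yields the desired identity up to a torsion class, and taking $a$ to be the order of this torsion gives the proposition. The main obstacle is the precise character computation, where the exponent $2^{20}=2\cdot 2^{19}$ reflects the multiplicity $2^{19}$ of the weight-$1$ eigenspace on $\Cl_+(V_{2d})$ combined with a factor of $2$ from the half-integer spin weights. An alternative, more direct approach would work with the filtration of Example \ref{clifford-dR} in a local filtered frame $P^2_{\DR}(f)\{1\} = \lambda\oplus Q\oplus \lambda^{-1}$ with $Q$ of rank $19$ and $\det Q$ of order dividing $2$; in that approach the most delicate step is identifying the rank-$2^{18}$ subbundle $F^1 H^1_{\DR}(\calA_\kappa)/\Fil^1\Cl_+$ of $\mathrm{gr}^0 \Cl_+$ algebraically and computing its determinant in terms of $\lambda$.
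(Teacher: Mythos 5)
Your overall route is the paper's: interpret $\lambda$ and $\kappa_E^*\lambda_{\A}$ as automorphic line bundles on the $\mathrm{CSpin}$-Shimura variety via $\sigma\circ j_{2d,n}$, compare the attached characters of the parabolic, dispose of the leftover "central" character using the spin level structure, and absorb the descent from $\CC$ to $E$ into the exponent $a$ by a torsion argument. But the decisive step is exactly the one you leave open, in two respects. First, the identity $\chi_{\A}=2^{20}\chi_V$ modulo central characters is asserted, not proved: the naive count you give (the Hodge cocharacter acts on $\Cl_+(V_{2d})$ with weights $0$ and $1$, each of multiplicity $2^{19}$) yields exponent $2^{19}$, and the promised extra "factor of $2$ from the half-integer spin weights" is precisely the normalization of the lift of the cocharacter to $\mathrm{CSpin}$ that has to be pinned down; you explicitly flag this as the main obstacle and offer no computation. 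The paper sidesteps this normalization entirely with Lemma \ref{psp-reps}: Clifford multiplication $x\mapsto\omega\cdot x\cdot v$ (for a fixed anisotropic $v\in V_{2d}$) produces a short exact sequence of $P_{\mathrm{sp}}$-representations $0\to\Fil^1\to\Cl_+(V_{2d}\otimes\CC)\to\rho^{-1}\otimes\Fil^1\to 0$, and taking determinants converts the comparison of $\det\calV_\phi$ and $\calV_\rho$ into an exact identity of line bundles once $\calV_{\det\Cl_+}$ is known to be trivial. Second, your mechanism for discarding the central discrepancy --- "characters of $Z(\mathrm{CSpin})$ give torsion bundles because the center acts trivially on the domain and through a finite quotient on $\mathrm{Sh}_{\KK_n^{\mathrm{sp}}}$" --- is not right as stated: automorphic line bundles attached to characters of the parabolic are not governed by an action of the center on the Shimura variety. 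The correct argument, which the paper applies to $\tau=\det\Cl_+(V_{2d}\otimes\CC)$, is that such a character extends to all of $G_1$, so the associated bundle is flat and corresponds to the restriction of the character to $\KK_n^{\mathrm{sp}}\cap G_1(\QQ)$; since $\KK_n^{\mathrm{sp}}\subset\mathrm{Spin}(V_{2d})(\mathbb{A}_f)$ and $\mathrm{Spin}$ is semisimple, that restriction is trivial, so the bundle is in fact trivial, not merely torsion. With these two inputs supplied, your argument collapses onto the paper's.

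A smaller point: your second paragraph (a de Rham analogue of Proposition \ref{kugasatakerizov}(iii) and a decomposition $H^1_{\DR}(\calA_\kappa)\cong \Cl_+(P^2_{\DR}(f)\{1\})\otimes L$ with $L$ torsion) is not used in the paper's proof of this proposition and is itself unsupported: nothing in the proposal explains why the auxiliary line bundle $L$ should be torsion, which is essentially the same difficulty you are trying to resolve, relocated. For this proposition the paper only needs the automorphic-bundle dictionary over $\CC$ together with the elementary spreading-out/norm argument showing that a relation valid over $\CC$ holds over $E$ up to torsion, whence the exponent $a$.
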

\begin{proof}

Given a line bundle $L$ on a finite type scheme $X$ over $E$, if it is trivial after pullback to $X\otimes \CC$, then it is torsion on $X$.  Indeed, if it is trivial on $X \otimes \CC$, a standard spreading-out argument shows that it is trivial on $X \otimes \overline{\QQ}$ also and therefore over some finite extension of $E$ of degree $r$.  By taking the norm of a trivialization defined over this finite extension, we obtain a trivialization of $L^{\otimes r}$ defined over $E$.

It therefore suffices to prove the claim over $\M_{2d,n,\CC}$.
For this, we rephrase everything in terms of automorphic bundles on Shimura varieties.  We refer the reader to section III of \cite{milne-autbundles} for a detailed overview of this subject.
Given Shimura data $(H,X)$, a point $x\in X$ determines a parabolic subgroup $P_x \subset H(\CC)$.  One can associate, to a finite-dimensional representation $\phi$ of $P_x$, an automorphic bundle $\calV_{\phi}$ on the Shimura variety $\mathrm{Sh}_{\KK}(H,X)_{\CC}$.  If the representation $\phi$ extends to a representation of $H(\CC)$, then the algebraic bundle $\calV_\phi$ carries an integrable connection and, in particular, can be associated to a local system. The local system is determined by the restriction of $\phi$ to $\KK \cap H(\QQ) \subset H(\CC)$ (see \cite{milne-autbundles}, section III.2).

For the orthogonal group $G$, if we fix a point on $\Omega^{\pm}$ corresponding to a line $\CC\omega \subset V_{2d}\otimes \CC$, the parabolic subgroup
is $$P_{\mathrm{ad}} = \{ g \in G(\CC)| g(\CC\omega) = \CC\omega\}.$$  
Let $P_{\mathrm{sp}}\subset G_1(\CC)$ denote the preimage of $P_{\mathrm{ad}}$ with respect 
to the adjoint map.  Finally, let $P_{\mathrm{ab}}$ denote the subgroup of $\mathrm{CSp}(\calW)(\CC)$ that preserves the weight $1$ Hodge filtration on $\calW$ induced by $\omega$ via the Kuga--Satake construction.

Let $\rho$ denote the one-dimensional representation of $P_{\mathrm{ad}}$ given by its action on the one-dimensional subspace $\CC\omega$.
It follows from the definitions that
$$\lambda = j_{2d,n}^*(\calV_{\rho}),$$
where $\calV_\rho$ denotes the automorphic bundle associated to $\rho$ on $\mathrm{Sh}_{\KK_n}(G,\Omega^{\pm})$.

Similarly, if $\phi: P_{\mathrm{ab}} \rightarrow \mathrm{End}(\mathrm{Fil}^1)$ denotes the action on the first filtered piece of the weight one Hodge structure on $W$, then
$$\lambda_{\A} = \mathrm{det}(\calV_{\phi}).$$
By restriction, we have a representation of $P_{\mathrm{sp}}$ on $\mathrm{Fil}^1$; the associated automorphic bundle is the pullback of $\calV_\phi$ by 
the map $\pi_{\mathrm{sp}}$.

We first show the following lemma.

\begin{lem}\label{psp-reps}
There is a short exact sequence of representations of $P_{\mathrm{sp}}$:
$$0 \rightarrow \mathrm{Fil}^1 \rightarrow \Cl_+(V_{2d}\otimes\CC) \rightarrow \rho^{-1} \otimes \mathrm{Fil}^1 \rightarrow 0.$$
\end{lem}

\begin{proof}
It follows from definitions that
$$\mathrm{Fil}^1 = \{a \in \Cl_+(V_{2d}\otimes\CC)| \omega a = 0\} = \{a \in \Cl_+(V_{2d}\otimes\CC)| a = \omega \cdot b\},$$
where the action of $P_{\mathrm{sp}}$ is by left multiplication.
Fix an element $ v\in V_{2d}$ such that $\langle v,v\rangle \ne 0$ and consider the linear map
$$\Cl_+(V_{2d}\otimes\CC) \rightarrow \Cl_+(V_{2d}\otimes\CC)$$
given by
$$x \mapsto \omega\cdot x\cdot v.$$
Since right multiplication commutes with the action of $P_{\mathrm{sp}}$, this map intertwines the parabolic action after we twist by $\rho$.
Furthermore, since right multiplication by $v$ is invertible, both the kernel and image of this map are $\mathrm{Fil}^1$.
\end{proof}

Let $$\tau = \det \Cl_+(V_{2d}\otimes\CC)$$ denote the one-dimensional representation of $P_{\mathrm{sp}}$.  Since this representation extends to
a representation of $G_1$, the line bundle $\calV_\tau$ is associated to a local system.  Furthermore, by \cite{andre}, section $4.2$, the compact open subgroup
$\KK_{n}^{\mathrm{sp}}$ is contained in $\mathrm{Spin}(V_{2d})(\mathbb{A}_f)$.  Since $\mathrm{Spin}$ is semisimple, the restriction of $\tau$ will be trivial, therefore the restriction of $\tau$ to the congruence subgroup $\KK_{n}^{\mathrm{sp}} \cap G_1(\QQ)$
must be trivial, and $\calV_\tau$ is the trivial bundle.

If we take the determinant of the short exact sequence of Lemma \ref{psp-reps}, and apply this triviality, we see that
$$(\pi_{\mathrm{sp}}^*\det\calV_\phi)^{\otimes 2} = (\calV_\rho)^{\otimes 2^{21}}.$$
as line bundles on $\mathrm{Sh}_{\KK_{n}^{\mathrm{sp}}}(G_1,\Omega_1^{\pm})_{\CC}$.

If we pull everything back to $\M_{2d,n,\CC}$ using the period map $j_{2d,n}$ and $\sigma$, the statement of the proposition follows immediately.

\end{proof}
 \subsection{Proof of Theorem \ref{amplehodge}}\label{proofample}

Fix a prime $p \geq 5$ such that $p$ does not divide $2d$.  Let $\kk = \overline{\FF}_p$ and $W = W(\kk)$ be the the ring of Witt vectors with fraction field
$K = W[1/p]$.
Since the number field $E$ is unramified at primes dividing $p$, we have a map 
$$\calO_{E}[1/2d] \rightarrow W(\kk)$$
and we can study the base-change of the Kuga--Satake morphism
$$\kappa_W: \M_{2d,n,W} \rightarrow \A_{g,d',n,W}.$$

In the next section we show the following

\begin{prop}\label{quasifinite}
The Kuga--Satake map $\kappa_{\kk}$ in characteristic $p$ is quasifinite when restricted
to the polarized locus:
$$\kappa_{\kk}:\M^{\circ}_{2d,n,\kk} \rightarrow \A_{g,d',n,\kk}.$$
\end{prop}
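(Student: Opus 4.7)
The plan is to reduce quasifiniteness to infinitesimal injectivity of $\kappa_\kk$, and then verify the latter via the compatibility of the Clifford construction with Hodge filtrations. Since $\M^{\circ}_{2d,n,\kk}$ is of finite type, it suffices to show that $\kappa_\kk$ is unramified at every geometric point $(X,L)$ with Kuga--Satake variety $A = \kappa_\kk(X,L)$.

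First I would identify the relevant tangent spaces via Kodaira--Spencer. Because $p \nodivide 2d$, the tangent space to $\M^{\circ}_{2d,n,\kk}$ at $(X,L)$ is identified with the space of first-order deformations of the Hodge filtration $F^2 \subset F^1 \subset P^2_{\DR}(X)$, while the tangent space to $\A_{g,d',n,\kk}$ at $A$ classifies first-order deformations of $F^1 \subset H^1_{\DR}(A)$ compatible with the polarization and $C^+$-action. Under these identifications, $d\kappa_\kk$ is the map on filtered deformation spaces induced by the Kuga--Satake construction.

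Next I would promote the algebra isomorphism of Proposition \ref{kugasatakerizov}(iii) to an isomorphism of \emph{filtered} modules
\[
\Cl_+(P^2_{\DR}(X)\{1\}) \simrightarrow \End_{C^+}(H^1_{\DR}(A)),
\]
where the left side carries the Hodge filtration described in Example \ref{clifford-dR} and the right side the filtration induced by $F^1$. This filtered refinement is the de Rham shadow of the statement in Snowden's appendix: the Clifford functor commutes with integral $p$-adic comparison, and in particular respects Hodge filtrations. With this in hand, $d\kappa_\kk$ corresponds to the map sending a deformation of $F^1 P^2_{\DR}(X)\{1\}$ to the induced deformation of $\Fil^1$ on the Clifford algebra. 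Injectivity now follows from the explicit shape of $\Fil^1$: by Example \ref{clifford-dR}, a local generator $\omega$ of the one-dimensional isotropic $F^1 P^2_{\DR}(X)\{1\}$ is itself a basis element of $\Fil^1$ (taking the empty product of $\gamma_i$'s), so any deformation of $F^1$ that trivializes after applying the Clifford construction must already fix $\omega$, and is thus trivial. Hence $d\kappa_\kk$ is injective, $\kappa_\kk$ is unramified, and the claim follows.

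The main obstacle is the filtered compatibility of Proposition \ref{kugasatakerizov}(iii) in mixed characteristic. Over $\CC$ this is a classical unwinding of the Kuga--Satake recipe, but the integral, characteristic-$p$ version requires the compatibility of the Clifford construction with crystalline and $p$-adic \'etale comparison functors, which is precisely the technical work carried out in the appendix. A secondary point is to check that this compatibility propagates in families over the integral moduli stack, so that the filtration on $\End_{C^+}(H^1_{\DR}(\calA_\kappa))$ transported along the isomorphism truly agrees with the Hodge-theoretic $\Fil^{\bullet}$ of the Clifford construction at every geometric fiber, not merely generically.
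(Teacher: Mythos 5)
Your overall reduction (unramified $\Rightarrow$ quasifinite, then a tangent-space computation via filtrations on de Rham cohomology) is plausible in outline, but there are two genuine gaps. First, the input you need is not Proposition \ref{kugasatakerizov}(iii) "promoted" to a filtered de Rham statement at a characteristic-$p$ point: that proposition is an \'etale statement over a characteristic-zero base, and Snowden's appendix by itself only compares strongly divisible lattices with Galois-stable lattices over $W$ — it says nothing directly about a K3/abelian pair living purely over $\kk$. To get the filtered isomorphism $\Cl_+(P^2_{\DR}\{1\}) \cong \End_{C^+}(H^1_{\DR})$ in characteristic $p$ one must lift to $W$ (possible by smoothness of $\M_{2d,n,W}$), use the \'etale isomorphism on the generic fiber, pass through the integral comparison functors, and then spread the resulting isomorphism out over a $p$-adic family (this is the content of Proposition \ref{clifford-crystal} and Lemma \ref{brianlemma} in the paper) before reducing mod $p$. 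Moreover, to identify $d\kappa_{\kk}$ with "the induced map on deformations of filtrations" you need the isomorphism to be compatible with the Gauss--Manin connections (i.e.\ an isomorphism of filtered crystals in a family, or at least over first-order thickenings), not merely a filtered isomorphism at one fiber; without horizontality you cannot transfer the vanishing of Kodaira--Spencer from the abelian side to the K3 side. This is precisely why the paper works with a family over a curve $B$ lifted to $W$, proves the crystal statement there, and then compares Kodaira--Spencer maps; what you call a "secondary point" is in fact the main technical content, and the paper's route (contradiction with a contracted curve, rather than pointwise unramifiedness) is chosen exactly so that this machinery applies.

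Second, your concluding injectivity step is wrong as written: $\omega$ is an odd element of the Clifford algebra, so it is \emph{not} a basis element of $\Fil^1\Cl_+(P^2_{\DR}(X)\{1\})$ — the "empty product of $\gamma_i$'s" does not land in the even part, and Example \ref{clifford-dR} only asserts that $\Fil^1$ is spanned by products $\omega\cdot\gamma_{i_1}\cdots\gamma_{i_{2k-1}}$ with an odd number of $\gamma$'s. Consequently "any deformation that trivializes after the Clifford construction must fix $\omega$" does not follow. A correct version of the computation is more delicate: one takes an element such as $\omega\cdot\eta_2$ with $\eta_2\in F^0$, uses Griffiths transversality and the Leibniz rule (or, in the pointwise deformation picture, the membership $(\omega+\epsilon\eta)\eta_2\in\Fil^1\otimes\kk[\epsilon]$), and derives $\eta\cdot\eta_2\in\omega\cdot\Cl(P^2_{\DR}\{1\})$, which contradicts the linear independence of $\omega,\eta,\eta_2$; this is the argument the paper actually carries out at the end of Section \ref{qfks}. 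As it stands, your proposal omits the lifting/comparison step that makes the de Rham statement available in characteristic $p$ and rests its final step on an incorrect description of $\Fil^1$ of the even Clifford algebra.
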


If we restrict $\kappa_{\kk}$ to the ordinary locus (i.e. K3 surfaces whose formal Brauer group have height $1$), then quasifiniteness is proven in \cite{rizov-total} using the theory of canonical lifts.

Let us take this proposition for granted and finish the proof of Theorem \ref{amplehodge}.
First, observe that it suffices to prove both parts of Theorem \ref{amplehodge} after base change to $\kk$ and also after passing to the finite \'etale cover $\M_{2d,n,\kk}$.

We first show two elementary lemmas about algebraic spaces.
\begin{lem}
Given a smooth algebraic space $X$ of finite type, a line bundle $L$ on $X$, and a reduced and irreducible divisor $Z \subset X$ such that 
$L$ is trivial on $X^{\circ} = X \backslash Z$,
there exists an integer $a$ such that
$L = \calO(aZ)$.
\end{lem}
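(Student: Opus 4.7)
The plan is to reduce to the standard excision sequence for Picard groups of a smooth space, using the fact that a rational section of a line bundle on a smooth (hence locally factorial) algebraic space determines a Cartier divisor.

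First I would reduce to the case that $X$ is connected: since $Z$ is irreducible it is contained in a single connected component $X_0$ of $X$, and on every other component $L$ is already trivial (since that component is contained in $X^{\circ}$), so $L \simeq \calO(0 \cdot Z)$ there. Thus it suffices to find $a$ working on $X_0$, and we may assume $X = X_0$. Since $X$ is smooth and connected, it is irreducible, and smoothness gives us a well-defined notion of the function field $K(X)$ and of rational sections of line bundles.

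Next, a trivialization of $L|_{X^{\circ}}$ provides a nowhere-vanishing section $s$ on $X^{\circ}$. I would view $s$ as a rational section of $L$ on all of $X$. Since $X$ is smooth, hence locally factorial, and $s$ is regular and nonvanishing on the complement of the irreducible divisor $Z$, the associated Cartier divisor $\mathrm{div}(s)$ is well-defined and supported on $Z$. As $Z$ is a reduced and irreducible divisor, we must have $\mathrm{div}(s) = aZ$ for a unique integer $a \in \ZZ$, which gives $L \simeq \calO(-aZ)$ (up to sign convention) as Cartier divisor classes on $X$.

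The one technical point is making sense of rational sections and the Cartier/Weil divisor correspondence for a smooth algebraic space rather than a scheme. I would handle this by passing to an \'etale presentation $U \to X$ by a smooth scheme, carrying out the argument on each irreducible component of $U$ (where the classical theory applies), and descending: the cocycle describing $L$ and the cocycle of $\calO(aZ)$ agree on the \'etale cover (by the scheme argument applied with the same $a$, since the preimage of $Z$ in each component of $U$ is a reduced, possibly non-irreducible divisor all of whose components are identified with $Z$ after descent), and hence on $X$. This descent step is the only place one needs to be slightly careful; everything else is the classical argument that $\Pic(X) \to \Pic(X \setminus Z)$ has kernel generated by $[\calO(Z)]$ when $Z$ is smooth and irreducible.
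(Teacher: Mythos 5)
Your argument is correct and follows essentially the same route as the paper: trivialize $L$ off $Z$, measure the order of vanishing of the resulting rational section along the preimage of $Z$ in an \'etale presentation $U \rightarrow X$, and descend the resulting isomorphism with $\calO(aZ_U)$. The only step you state loosely --- that the order of vanishing is the \emph{same} integer $a$ on every irreducible component of $Z_U$ because they are ``identified after descent'' --- is exactly the point the paper spells out: since $Z$ is irreducible, any two components of $Z_U$ are dominated by a single component of $Z_R$ under the two projections, and since the trivialization descends to $X^{\circ}$ its two pullbacks to $R$ agree, forcing equal orders of vanishing.
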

\begin{proof}
Pick a trivialization
$$\psi: \calO|_{X^{\circ}} \rightarrow L|_{X^{\circ}}.$$
Choose a presentation
$R \rightarrow U \times U$ 
of $X$ as a quotient by an \'etale equivalence relation.  Let $Z_U$ and $Z_R$ denote the preimages of $Z$ in $U$ and $R$ respectively, and let $U^{\circ}$ and $R^\circ$ denote the open Zariski-dense complements.  
If we pullback $\psi$ to a trivialization $\psi^{\circ}_{U}$, over $U^{\circ}$, we can evaluate its order of vanishing along each irreducible component of $Z_U$.  
Since $Z$ is irreducible, given any two irreducible components of $Z_U$, there exists an irreducible component of $Z_R$ dominating them both with respect to the two projections from $R$ to $U$.  Therefore, since $\psi$ descends to $X^{\circ}$, by pulling back to $R$, we see that its order of vanishing is the same on each irreducible component of $Z_U$.  After twisting appropriately, $\psi^{\circ}_{U}$ extends to an isomorphism
$$\psi_{U}: \calO(a Z_U) \rightarrow L_U$$
which descends to an isomorphism on $X$ since its restriction to $U^{\circ}$ descends.
\end{proof}

\begin{lem}\label{specialize-line}
Let $X$ be an algebraic space, equipped with a smooth, finite-type map 
$$f: X \rightarrow \Spec W,$$
and a line bundle $L$ on $X$.  If the restriction $L_K$ to the generic fiber of $f$ is trivial, then
so is the restriction $L_\kk$ to the special fiber.
\end{lem}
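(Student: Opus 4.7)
The plan is to identify $L$ with the line bundle of a divisor supported on the special fiber, and then to check that each such elementary building block restricts to a trivial line bundle on $X_\kk$.

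Since $W$ is regular and $f$ is smooth, the algebraic space $X$ is regular, and its special fiber $X_\kk$ is smooth and of finite type over $\kk$. In particular the connected components of $X_\kk$ coincide with its (finitely many) irreducible components; call them $Z_1,\dots,Z_r$. They are pairwise disjoint smooth closed subspaces of $X$, and each $Z_i$ is a Cartier divisor since $X$ is regular. The standard excision sequence
\[
\bigoplus_{i=1}^{r}\ZZ \longrightarrow \Pic(X) \longrightarrow \Pic(X_K) \longrightarrow 0,\qquad e_i \mapsto [\calO_X(Z_i)],
\]
combined with the hypothesis $L|_{X_K}\cong \calO_{X_K}$, lets me write $L \cong \calO_X\bigl(\sum_i a_i Z_i\bigr)$ for some $a_i \in \ZZ$.

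The key step is that each $\calO_X(Z_i)$ has trivial restriction to $X_\kk$. Since $X_\kk = \bigsqcup_j Z_j$ as a disjoint union, it suffices to check this on each component $Z_j$. For $j \ne i$, the closed subspaces $Z_i$ and $Z_j$ are disjoint, so on a neighborhood of $Z_j$ disjoint from $Z_i$ the sheaf $\calO_X(Z_i)$ is canonically isomorphic to $\calO_X$, and its restriction to $Z_j$ is trivial. For $j=i$, adjunction gives $\calO_X(Z_i)|_{Z_i} \cong N_{Z_i/X}$. The crucial observation is that near $Z_i$, because $Z_i$ is open and closed in $X_\kk$, the ideal sheaf of $Z_i$ agrees with the ideal sheaf of $X_\kk$, namely $(p)$; hence $p$ generates $\calI_{Z_i}/\calI_{Z_i}^2$ freely and trivializes $N_{Z_i/X}$.

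Combining the two cases, $L|_{X_\kk}$ is a tensor product of trivial line bundles and therefore trivial. The main subtlety is not a deep geometric obstacle but a bookkeeping one: ensuring that the excision sequence for Picard groups and the normal bundle computation extend from schemes to algebraic spaces. Both extend verbatim because $X$ is regular of finite type over a discrete valuation ring and all computations in the argument are \'etale-local on $X$.
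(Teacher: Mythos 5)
Your argument is correct and follows essentially the same route as the paper: write $L\cong\calO_X(\sum_i a_i Z_i)$ with $Z_i$ the connected (hence irreducible) components of the special fiber, then observe that each $\calO_X(Z_i)$ restricts trivially to every component, using disjointness for $j\ne i$ and, for $j=i$, the fact that near $Z_i$ the fiber is cut out by $p$ (your normal-bundle phrasing is the same computation as the paper's $\calO_{Z_i}(Z_i)=\calO_{Z_i}(\sum_j Z_j)=\calO_{Z_i}$). The only presentational difference is that where you cite the excision sequence for $\Pic$ of a regular algebraic space, the paper proves exactly that extension statement by hand in the preceding lemma via an \'etale presentation, so your appeal to it being "verbatim" for algebraic spaces is precisely the content the paper supplies.
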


\begin{proof}
If we let $Z_i$ denote the connected components of $X_{\kk}$, then by smoothness of $f$, each $Z_i$ is irreducible and reduced and is a Cartier divisor on $X$.  Since they are disjoint, we have
$$\calO_{Z_i}(Z_j) = \calO_{Z_i}$$
for $j \ne i$.
Also, since the central fiber $X_{\kk} = \sum_{j} Z_j$ is principal, 
we have 
$$ \calO_{Z_i}(Z_i) = \calO_{Z_i}(\sum_{j} Z_j)  = \calO_{Z_i}.$$

Since $L_K$ is trivial, by the previous lemma applied iteratively,
there exist integers $a_i$ such that 
$$L = \calO_X(\sum a_i Z_i).$$
Therefore, its restriction to each component of $X_{\kk}$ is trivial.
\end{proof}

Since $\M_{2d,n,W}$ is smooth over $\Spec W$, Proposition \ref{hodgecompare} and Lemma \ref{specialize-line} imply that
$$\kappa_{\kk}^*(\lambda_{\A}^{\otimes a}) = \lambda^{\otimes (2^{20}\cdot a)} \in \mathrm{Pic}(\M_{2d,n,\kk}).$$

By Theorem $V.2.3$ of \cite{faltings-chai}, the determinant abelian Hodge bundle $\lambda_{\A}$ is ample on $\A_{g,d',n,\kk}$.  Therefore, Proposition \ref{quasifinite} immediately implies that $\lambda$ is positive on any curve that intersects $\M^{\circ}_{2d,n,\kk}$.  Furthermore, if we restict to the polarized locus $\M_{2d,n,\kk}^{\circ}$, the map $\kappa_{\kk}$ is separated, so by Zariski's Main Theorem, it is a composition of an open immersion and a finite morphism to $\A_{g,d',n,\kk}$.  Therefore the pullback of $\lambda_{\A}$, and so also $\lambda$, are ample.

\section{Quasifiniteness of Kuga--Satake}\label{qfks}

In this section, we prove Proposition \ref{quasifinite}, stating that the Kuga--Satake morphism 
$$\kappa_{\kk}: \M^{\circ}_{2d,n,\kk} \rightarrow \A_{g,d',n,\kk}$$
 is quasifinite on the polarized locus.

Suppose otherwise.  Then there exists an unramified map
$j_{\kk}: B_{\kk} \rightarrow \M^{\circ}_{2d,n,\kk}$ from a
smooth affine curve $B_{\kk}$ over $\kk$ such that $\kappa_{\kk}\circ j_{\kk}$ contracts $B_{\kk}$.
Since $\M^{\circ}_{2d,n,W}$ is smooth over $\Spec W$, after possibly shrinking $B_{\kk}$, there exists a lift
$$B = \Spec A \rightarrow \Spec W$$
where $B$ is smooth affine scheme of relative dimension $1$ and a map
$$j: B \rightarrow \M^{\circ}_{2d,n,W}$$
that specializes to $j_{\kk}$.

After passing to an \'etale neighborhood, we can assume $j$ arises from a polarized
family of K3 surfaces 
$$f: X \rightarrow B$$
where $X$ is a scheme, with polarization induced by a line bundle $L$ on $X$.  We can assume by further shrinking
that there exists an \'etale map $W[t,t^{-1}] \rightarrow A$.

The composition $\kappa_W\circ j$ and Proposition \ref{kugasatakerizov} gives us a relative abelian scheme
$$\pi: \calA \rightarrow B$$
equipped with a fiber-wise action
$$C^+ \rightarrow \End_{B} \calA$$
such that, 
over $\Spec A[1/p]$, we have an isomorphism of \'etale sheaves
\begin{equation}\label{etaleclifford}
\Cl_{+}(\Pet\ZZ_p(1)) \simrightarrow \End_{C^{+}}(\Ret^1\pi_*(\ZZ_p)).
\end{equation}

Given the family (over $\Spec \kk$) $$f_\kk: X_\kk \rightarrow B_\kk,$$ 
recall we have the Gauss--Manin connection on relative de Rham cohomology:
$$\Del: R^2f_{\kk,*}(\Omega^*_{X_{\kk}/B_{\kk}}) \rightarrow R^2f_{\kk,*}(\Omega^*_{X_{\kk}/B_{\kk}}) \otimes \Omega^1_{B_{\kk}}.$$
If we pass to the associated graded with respect to the Hodge filtration, we have the $\calO_{B_{\kk}}$-linear map
$$\mathsf{gr}^{2}\Del: f_*(\Omega^2_{X_{\kk}/B_{\kk}}) \rightarrow R^1f_*(\Omega^1_{X_{\kk}/B_{\kk}})\otimes \Omega^1_{B_{\kk}},$$
known as the Kodaira-Spencer map.  After taking duals, $\mathsf{gr}^{2}\Del$ is identified with the differential of $j_{\kk}$ which by assumption is nonvanishing.

Therefore, quasifiniteness is reduced to the following proposition.

\begin{prop}
If the family $\calA_\kk \rightarrow B_\kk$ is trivial, then the Kodaira-Spencer map
$\mathsf{gr}^{2}\Del$ for $f_{\kk}:X_{\kk}\rightarrow B_{\kk}$ vanishes identically.
\end{prop}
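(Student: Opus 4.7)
The plan is to transfer triviality of the Kuga--Satake abelian scheme to vanishing of Kodaira-Spencer on the K3 side via a de Rham incarnation of the Clifford isomorphism \eqref{etaleclifford}.

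\textbf{Step 1 (de Rham/crystalline comparison).} First I would invoke the $p$-adic comparison theorems in conjunction with the compatibility statement in Snowden's appendix: the Clifford functor intertwines the $p$-adic \'etale and filtered de Rham/crystalline realizations attached to the families $f\colon X \to B$ and $\pi\colon \calA \to B$. Applied to \eqref{etaleclifford}, this upgrades to a horizontal isomorphism of filtered vector bundles with integrable connection on $B$,
$$\Cl_+\bigl(\Pet_{\DR}(X/B)\{1\}\bigr) \; \cong \; \End_{C^+}\bigl(R^1_{\DR}\pi_*(\calA/B)\bigr),$$
and reduction modulo $p$ yields the analogous identification over $B_\kk$ of filtered $\calO_{B_\kk}$-modules equipped with their Gauss--Manin connections.

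\textbf{Step 2 (triviality propagates).} If $\calA_\kk \to B_\kk$ is trivial, say $\calA_\kk \cong A_0 \times B_\kk$, then $R^1_{\DR}\pi_*(\calA_\kk/B_\kk)$ is the pullback from $\Spec\kk$ of the filtered vector space $H^1_{\DR}(A_0)$, so carries both trivial Hodge filtration and trivial Gauss--Manin connection. The same then holds for $\End_{C^+}$ of this bundle, and transporting via Step 1 we conclude that the connection on $\Cl_+\bigl(\Pet_{\DR}(X_\kk/B_\kk)\{1\}\bigr)$ is trivial. In particular, the associated Kodaira-Spencer map $\Fil^1 \to (\Fil^0/\Fil^1) \otimes \Omega^1_{B_\kk}$ vanishes identically.

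\textbf{Step 3 (recovering K3 Kodaira-Spencer).} Working locally on $B_\kk$, choose a filtered basis $\omega, \gamma_1, \dots, \gamma_{19}, \omega'$ of $\Pet_{\DR}(X_\kk/B_\kk)\{1\}$ with $\omega$ spanning $\calF^1$, the $\gamma_i$ completing a basis of $\calF^0$, and $\omega'$ outside $\calF^0$; arrange that $q(\gamma_j) \ne 0$. Griffiths transversality for the K3 family forces $\nabla \omega = a_0 \omega + \sum_{i=1}^{19} a_i \gamma_i$ with no $\omega'$ component, and vanishing of $\mathsf{gr}^2\Del$ is equivalent to $a_1 = \dots = a_{19} = 0$. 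Fix $j$; the element $\omega \gamma_j$ lies in $\Fil^1 \Cl_+$, and
$$\nabla(\omega \gamma_j) = (\nabla \omega)\,\gamma_j + \omega\,(\nabla \gamma_j).$$
Expanding and using $\omega^2 = 0$, the terms $a_0 \omega \gamma_j$ and $\omega (\nabla \gamma_j) = \sum_k c_k \omega \gamma_k + c_* \omega \omega'$ contribute only to $\Fil^1 \otimes \Omega^1$ via the $\omega \gamma_k$ pieces, so modulo $\Fil^1 \otimes \Omega^1$ we find
$$\nabla(\omega \gamma_j) \; \equiv \; \sum_{i \ne j} a_i \, \gamma_i \gamma_j \; + \; a_j\, q(\gamma_j) \; + \; c_* \, \omega\omega'.$$
The classes $\{\gamma_i \gamma_j\}_{i \ne j}$, $1$, and $\omega \omega'$ are linearly independent in $\Fil^0/\Fil^1$. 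Vanishing of the Clifford Kodaira-Spencer therefore forces every $a_i$ ($i \ne j$) and $a_j$ to be zero; varying $j$, all $a_i$ vanish, proving $\mathsf{gr}^2\Del = 0$.

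\textbf{Main obstacle.} The substantive step is Step 1: one must promote the $\ell$-adic (in fact $p$-adic) Clifford comparison to a \emph{filtered} identification of crystalline/de Rham bundles with connection, for which the appendix's compatibility between the Clifford algebra construction and integral $p$-adic comparison functors is essential. Once this filtered de Rham comparison is in hand, Steps 2 and 3 are formal.
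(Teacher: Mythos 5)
Your overall route is the paper's own: your Steps 2 and 3 reproduce the concluding argument of Section 6. In particular Step 3 is the same computation in the Clifford filtration (using that $\Fil^1$ of $\Cl_+(P^2_{\DR}(f_{\kk})\{1\})$ is spanned by elements $\omega\cdot\prod\gamma_i$ with $\gamma_i\in F^0$, as in Example \ref{clifford-dR}), phrased with explicit coefficients rather than by contradiction, and it is correct; note that you implicitly use $\omega^2=0$ and $\psi(\omega,\gamma_i)=0$, both consequences of $(F^2)^{\perp}=F^1$, and that the normalization $q(\gamma_j)\ne 0$ is not needed once you vary $j$.

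The genuine gap is in Step 1, which you rightly flag as the crux but treat as an off-the-shelf invocation. There is no relative \emph{integral} comparison theorem that directly converts \eqref{etaleclifford} into an isomorphism of filtered bundles with connection over the base whose formation commutes with reduction mod $p$: the relative comparison of Faltings/Andreatta--Iovita is rational (a statement about filtered F-isocrystals), while Fontaine--Laffaille/Fontaine--Messing and the appendix's Clifford compatibility are proved only over $W$, i.e.\ at points. The paper bridges this as follows: pass to the $p$-adic completion $\Bhat$ and use the relative rational comparison to get an isomorphism of filtered F-isocrystals $\Cl_+(P^2_{\DR}(f)\{1\})[1/p]\simeq \End_{C^+}(R^1\pi_*\Omega^*_{\calAhat/\Bhat})[1/p]$ (Lemma \ref{clifford-isocrystal}); then obtain the integral statement (Proposition \ref{clifford-crystal}) via the specialization Lemma \ref{brianlemma}, which reduces integrality over $\Bhat$ to pullbacks along all $W$-points, where Fontaine--Laffaille theory together with the appendix (Hodge--Tate weights in $[-1,1]$, whence the bound $p\ge 5$) applies. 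Separately, your assertion that ``reduction modulo $p$ yields the analogous identification over $B_\kk$'' needs the Morita argument of Lemma \ref{morita}: since $p\nmid 2d$, $C^+\otimes_{\ZZ} W$ is a matrix algebra, and this is what guarantees that $\End_{C^+}(R^1\pi_*\Omega^*_{\calAhat/\Bhat})$ is a filtered crystal with locally split filtration whose formation, filtration included, commutes with $-\otimes_W\kk$. Without these ingredients, ``invoke the comparison theorems and the appendix, then reduce mod $p$'' does not by itself produce the filtered, horizontal identification over $B_\kk$ on which your Steps 2 and 3 depend.
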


To prove this proposition, we use comparison theorems from p-adic Hodge theory to translate 
\eqref{etaleclifford} into an isomorphism of associated filtered Frobenius crystals.  By reducing mod $p$, this isomorphism
allows us to study the de Rham cohomology over $\kk$.

\subsection{Setup}\label{section-setup}

We first replace the ring $A$ with its $p$-adic completion, i.e., we replace $B$ with the $p$-adically complete scheme
$$\Bhat = \Spec \Ahat$$
and with the families obtained via base change
$$f: \Xhat \rightarrow \Bhat,\qquad \pi: \calAhat \rightarrow \Bhat$$
over $\Spec W$.
We can assume that there is a lift of the Frobenius morphism $\phi$ on $A\otimes \kk$ to $\Ahat$. 

\begin{remark}\label{setup}

In this section and the next, we will recall some results and constructions from $p$-adic Hodge theory, specialized to our setting.  In what follows,
let $R$ be an integral domain that is the $p$-adic completion of a smooth $W$-algebra, equipped with a lift of Frobenius on $R\otimes \kk$, and let $\Shat = \Spec R$, $R[1/p] = R\otimes_{W} K$ and $\Shat[1/p] = \Spec R[1/p]$.

We will work with lisse \'etale $\ZZ_p$-sheaves and $\QQ_p$-sheaves on $\Shat[1/p]$.  If we pick a geometric point $\sbar: \Spec \Omega \rightarrow \Shat[1/p]$, we can think of such a sheaf as a finite free $\ZZ_p$-module (respectively, $\QQ_p$-module) with a continuous action of the profinite group $\pi_1(\Shat[1/p], \sbar)$.

In our situation, the natural base change functor from \'etale sheaves on $B[1/p]$ to \'etale sheaves on $\Bhat[1/p]$ gives rise to an analog of equation \eqref{etaleclifford} over $\Bhat[1/p]$.
\end{remark}

\begin{defn} A filtered Frobenius crystal over $\widehat{S}$ is given by the data $(\calE, \Fil^i, \Del, \Phi)$ where
\begin{itemize}
\item $\calE$ is a locally free sheaf on $\Shat$ of finite rank,
\item $\Fil^i$, for $i \in \ZZ$ is a decreasing filtration of $\calE$ by locally direct summands,
\item $\Del$ is an integrable, topologically quasi-nilpotent connection $\Del: \calE \rightarrow \calE \otimes \Omega^{1}_{\Shat/W}$
satisfying Griffiths transversality, and
\item a horizontal isomorphism $\phi: (\calE \otimes K)\otimes_{\phi} \Shat[1/p] \rightarrow \calE\otimes K.$
\end{itemize}
\end{defn}
In the above, $\Omega^1_{\Shat/W}$ is the module of separated differentials of $R$ over $W$.  Notice that we have not required $\phi$ to be defined integrally (as opposed to an F-crystal). The Frobenius structure will not play a major role in our discussion.

Similarly, we can define a filtered F-isocrystal by working everywhere with $\Shat[1/p]$.
The category of filtered Frobenius crystals is closed with respect to duals and tensor products.

\begin{example}
For the base $\Shat = \Spec W$, we define the filtered Frobenius crystal $W\{-1\}$ to be the free $W$-module of rank $1$ with generator $e \in \Fil^1 \backslash \Fil^2$ and  $\phi(e) = p\cdot e$.  We can define $W\{k\}$ for $k \in \ZZ$ in the natural way, as well as Tate twists $\calE\{k\} = \calE \otimes_W W\{k\}$ of a filtered Frobenius crystal over $\Shat$.
\end{example}

\begin{example}
Given our family $f: \Xhat \rightarrow \Bhat$, relative de Rham cohomology defines a vector bundle on $\Bhat$
$$R^2f_*(\Omega^*_{\Xhat/\Bhat})$$
which can be equipped with the structure of a filtered Frobenius crystal.  The filtration and connection come from the Hodge filtration and Gauss--Manin connection, respectively, while the Frobenius structure comes from its identification with crystalline cohomology, once we have the lift of Frobenius to $\Bhat$, by Remark $2.23$ and Theorem $3.8$ in \cite{ogus2}.
\end{example}

\begin{remark}
The negative Poincar\'e pairing 
$$\psi_{\DR}:  R^2f_*(\Omega^*_{\Xhat/\Bhat})\{1\} \otimes R^2f_*(\Omega^*_{\Xhat/\Bhat})\{1\} \rightarrow \calO_{\Bhat}$$
is compatible with the Gauss--Manin connection, filtration, and Frobenius action (after twisting).
Therefore, these structures can be restricted to the primitive cohomology
$P^2_{\DR}(f)\{1\}$ defined in Example \ref{clifford-dR}.
The Clifford algebra
$$\Cl_+(P^2_{\DR}(f)\{1\})$$
inherits the structure of a filtered Frobenius crystal on $\Bhat$ (the filtration was discussed in Example \ref{clifford-dR}; the other structures descend from the even tensor algebra to the Clifford algebra using the compatibility with $\psi_{\DR}$.)
\end{remark}

 

Similarly, for $\pi: \calAhat \rightarrow \Bhat$, we have the filtered Frobenius crystal $R^1\pi_*(\Omega^*_{\calAhat/\Bhat})$.  It inherits an action of the algebra $C^+$.  If we consider the coherent sheaf
$$\End_{C^{+}}(R^1\pi_*(\Omega^*_{\calAhat/\Bhat})),$$
it inherits a connection, filtration, and Frobenius action from the corresponding 
structure on the Frobenius crystal $\End(R^1\pi_*(\Omega^*_{\calAhat/\Bhat}))$.  

\begin{lem}\label{morita}
The above structures on the coherent sheaf
$$\End_{C^{+}}(R^1\pi_*(\Omega^*_{\calAhat/\Bhat}))$$
define a filtered Frobenius crystal.  Furthermore, if we reduce modulo $p$,
we have an isomorphism
$$\End_{C^{+}}(R^1\pi_*(\Omega^*_{\calAhat/\Bhat}))\otimes_{W} \kk \simrightarrow \End_{C^{+}}(R^1\pi_*(\Omega^*_{\calA_{\kk}/B_{\kk}}))$$
of filtered vector bundles with connection on $B_{\kk}$.
\end{lem}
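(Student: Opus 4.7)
The plan is to obtain all structure on $\End_{C^+}$ by restriction from $\End(R^1\pi_*(\Omega^*_{\calAhat/\Bhat}))$, using that $C^+$ acts through genuine $\Bhat$-endomorphisms of $\calAhat$. First I would observe that any such endomorphism induces a map on relative de Rham cohomology that preserves the Hodge filtration (by functoriality of $\pi_*\Omega^1_{\calAhat/\Bhat}$), commutes with the Gauss--Manin connection (standard horizontality of the action of endomorphisms on the cohomology of abelian schemes), and commutes with the Frobenius on $R^1\pi_*(\Omega^*_{\calAhat/\Bhat})[1/p]$ (via the crystalline interpretation of de Rham cohomology under the lift of Frobenius). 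Consequently $\End_{C^+}(R^1\pi_*(\Omega^*_{\calAhat/\Bhat}))$ is stable under each structure and inherits filtration, connection, and Frobenius by restriction.

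The central technical step is to show that $R^1\pi_*(\Omega^*_{\calAhat/\Bhat})$ is locally free of rank one over $C^+ \otimes_W \calO_{\Bhat}$. Both sides have rank $2^{20}$ over $\calO_{\Bhat}$, so it suffices to check freeness fiberwise. Since $p \nodivide 2d$, the algebra $C^+ \otimes_\ZZ W$ is separable (the discriminant of $L_{2d}$ divides a power of $2d$), so Wedderburn over each closed fiber reduces the question to dimension matching. The Kuga--Satake structure on the generic fiber, transported through Proposition \ref{kugasatakerizov}(iii) and the comparison theorems between \'etale and de Rham cohomology of abelian schemes, identifies $R^1\pi_*$ generically with the regular $C^+$-module; constancy of the fiberwise $C^+$-module type over a connected base then gives local freeness throughout $\Bhat$. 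Right multiplication yields a canonical isomorphism $\Cl_+(L_{2d}) \otimes_W \calO_{\Bhat} \simrightarrow \End_{C^+}(R^1\pi_*(\Omega^*_{\calAhat/\Bhat}))$, from which local freeness, the fact that the Hodge-induced filtration has pieces which are local direct summands, Griffiths transversality, and topological quasi-nilpotence all follow immediately.

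For the compatibility with reduction mod $p$: the formation of $R^1\pi_*(\Omega^*_{-/-})$ commutes with the base change $W \to \kk$ by flat base change for smooth proper morphisms, and the $C^+$-action is preserved by functoriality. The same local-freeness argument applies verbatim to $\calA_\kk / B_\kk$, producing a rank-one $C^+ \otimes \calO_{B_\kk}$-module structure on $R^1\pi_*(\Omega^*_{\calA_\kk/B_\kk})$. The resulting identifications of both $\End_{C^+}$ sheaves with the twisted form $\Cl_+(L_{2d}) \otimes \calO$ show that the natural base-change map is an isomorphism of locally free modules of the same rank, compatibly with filtration and connection. The main obstacle I anticipate is precisely the integral local-freeness of $R^1\pi_*$ as a $C^+$-module: this is where $p \nodivide 2d$ is indispensable, and where one must transport the Kuga--Satake structure from the \'etale side (where it is manifest via \eqref{etaleclifford}) to the de Rham side in a way compatible with integral $W$-coefficients.
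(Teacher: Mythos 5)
Your overall strategy (exploit that $p \nodivide 2d$ makes $C^{+}\otimes_{\ZZ} W$ a split matrix algebra) is the same lever the paper pulls, but the execution has a genuine gap at exactly the point that carries the content of the lemma. Your claim that right multiplication gives a \emph{canonical} isomorphism $\Cl_+(L_{2d})\otimes_W \calO_{\Bhat} \simrightarrow \End_{C^{+}}(R^1\pi_*(\Omega^*_{\calAhat/\Bhat}))$, and that the filtration statements ``follow immediately'' from it, does not work: local freeness of rank one over $C^{+}\otimes\calO_{\Bhat}$ only makes $\End_{C^{+}}$ a twisted form of a matrix algebra locally, not canonically trivial, and in any case such an identification is at most one of sheaves of modules — it cannot be compatible with the Hodge filtration, since a filtered trivialization would force the Kodaira--Spencer map to vanish, contradicting the very nonvanishing that the quasifiniteness argument exploits. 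So the assertions that the filtered pieces of $\End_{C^{+}}$ are locally free local direct summands, and that formation of $\End_{C^{+}}$ together with its filtration commutes with $-\otimes_W \kk$, are left unproved; these are precisely the nontrivial claims of the lemma. What is needed (and what the paper does) is a Morita-equivalence argument: since $C^{+}\otimes_{\ZZ}\Ahat$ is a matrix algebra over $\Ahat$, the functor $\mu$ identifies $\End_{C^{+}}(M)$ with $\End_{\Ahat}(\mu(M))$ for $M$ the de Rham cohomology, and one tracks the Hodge exact sequence $0\to F\to M\to Q\to 0$ through $\mu$: because $\mu(F)$ is locally free and a local direct summand, so are the pieces $\Fil^k\End_{C^{+}}(M)$, and the base-change compatibility reduces to $\mu(N\otimes_W\kk)=\mu(N)\otimes_W\kk$ applied to $M$, $F$, $Q$.

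Two smaller points. First, your ``central technical step'' is both heavier than necessary and misdiagnosed: local freeness of rank one over $C^{+}\otimes\calO_{\Bhat}$ is automatic from Morita theory once you know $R^1\pi_*(\Omega^*_{\calAhat/\Bhat})$ is locally free over $\calO_{\Bhat}$ (the relevant rank count is forced), so no transport of the Kuga--Satake structure from the \'etale side via integral comparison theorems is needed for this lemma; invoking those theorems here imports exactly the delicate machinery the rest of Section 6 is devoted to, for no gain. The hypothesis $p\nodivide 2d$ enters only to make the pairing on $L_{2d}\otimes W$ nondegenerate, hence $C^{+}\otimes_{\ZZ} W$ split. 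Second, your opening paragraph (the structures on $\End(R^1\pi_*)$ restrict to the $C^{+}$-centralizer) is fine and agrees with the paper's setup; the issue is solely with how you establish local freeness, direct-summand filtration, and mod-$p$ compatibility for that centralizer.
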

\begin{proof}
In the second statement, the filtration on the right-hand side is induced from the filtration on
$$\End(R^1\pi_*(\Omega^*_{\calA_{\kk}/B_{\kk}}))$$
coming from the Hodge filtration on relative de Rham cohomology.

For the first statement, the only thing to check is that the coherent sheaf is locally free and its filtered pieces are locally direct summands.
For the second statement, we need to check compatibility of base change with both passing to the $C^{+}$-centralizer and the construction of the filtration.

We can argue as follows.  All statements are local, so after passing to an affine neighborhood, assume that
relative de Rham cohomology of $\pi$ and both its filtered piece and quotient are free on $\Bhat$, i.e. that we have a short exact sequence
$$0 \rightarrow F \rightarrow M \rightarrow Q \rightarrow 0$$
of free $\Ahat$-modules, all equipped with $C^{+}$-action.  By linearity, this action extends to an action of $C^{+}\otimes \Ahat$.

Since $p \nodivide 2d$, the symmetric pairing on $L_{2d}\otimes_{\ZZ} W$ is nondegenerate.  It follows from Knus (\cite{knus}, IV.3) 
that
$$C^{+}\otimes_{\ZZ} W = \Cl_{+}(L_{2d}\otimes_{\ZZ} W) \cong \mathrm{Mat}_{2^{19}}(W)$$
is a matrix algebra, and thus the same for $C^{+}\otimes_{\ZZ} \Ahat$.

We therefore have a Morita equivalence
$$\mu: (C^{+}\otimes_{\ZZ} \Ahat)\mathrm{-mod} \rightarrow  \Ahat\mathrm{-mod}$$
where the inverse functor is defined by tensoring with $\Ahat^{\oplus 2^{19}}$ and using the natural action of the matrix algebra.

In particular, it is easy to see from this that a $(C^{+}\otimes_{\ZZ} \Ahat)$-module $N$ is locally free as an $\Ahat$-module if and only if 
$\mu(N)$ is a locally free $\Ahat$-module.  

Using the equivalence $\mu$, we have an identification of $\Ahat$-modules
$$\End_{C^{+}}(M) = \End_{C^{+}\otimes\Ahat}(M) \simrightarrow \End_{\Ahat}(\mu(M))$$
which is locally free since $\mu(M)$ is.
Furthermore, the filtration on the left-hand-side is induced by the two-step filtration on $M$ by the prescription
$$\Fil^k(\End_{C^{+}}(M)) = \{\gamma \in \End_{C^{+}}(M) | \gamma(\Fil^j(M)) \subset \Fil^{j+k}(M)\}.$$
This corresponds via the above isomorphism with the analogous filtration on the right-hand-side (defined by $\mu(F)$).
Since $\mu(F)$ is locally free and is locally a direct summand, the filtered pieces of $\End_{\Ahat}(\mu(M))$ 
are also locally free and locally direct summands (one can see this, for example, by picking a local complement for $\mu(F)$).  This proves the first claim.

For the second claim, we use the fact that, for a $(C^{+}\otimes_{\ZZ} \Ahat)$-module $N$, we have
$$\mu(N\otimes_{W} \kk) = \mu(N)\otimes_{W} \kk$$
which can be checked using the formula for $\mu^{-1}$.  The isomorphism of vector bundles follows from the chain of isomorphisms
$$\End_{C^+}(M\otimes_{W} \kk) = \End_{\Ahat}(\mu(M\otimes_{W}\kk)) = \End_{\Ahat}(\mu(M)\otimes_{W}\kk) = \End_{\Ahat}(\mu(M))\otimes_{W}\kk = \End_{C^+}(M)\otimes_{W} \kk,$$
where use locally-freeness of $\mu(M)$ in the third step.
A similar argument works for the filtered pieces, e.g.
\begin{align*}
\Hom_{C^{+}}(Q\otimes_{W}\kk, F\otimes_{W}\kk) &= \Hom_{\Ahat}(\mu(Q)\otimes_{W}\kk, \mu(F)\otimes_{W}\kk) \\&= \Hom_{\Ahat}(\mu(Q),\mu(F))\otimes_{W}\kk = \Hom_{C^{+}}(Q, F)\otimes_{W}\kk.
\end{align*}
\end{proof}


The key proposition that lets us prove quasifiniteness is the the following de Rham version of  equation \eqref{etaleclifford}.
\begin{prop}\label{clifford-crystal}
We have an isomorphism of filtered Frobenius crystals on $\Bhat$:
$$\Cl_{+}(P^2_{\DR}(f)\{1\}) \simrightarrow \End_{C^{+}}(R^{1}\pi_*(\Omega^*_{\calAhat/\Bhat})).$$
\end{prop}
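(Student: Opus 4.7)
The plan is to derive the de Rham isomorphism from the \'etale isomorphism \eqref{etaleclifford} by applying an integral $p$-adic comparison functor, with Snowden's appendix supplying the compatibility of Clifford constructions with this functor.

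Concretely, I would invoke the comparison functor $\DDD$ (in the form developed by Faltings or, in the small-base setting with lift of Frobenius available as in Section \ref{section-setup}, by Kisin) sending crystalline lisse $\ZZ_p$-sheaves on $\Bhat[1/p]$ to filtered Frobenius crystals on $\Bhat$. For the two smooth proper families in play, the standard crystalline--de Rham comparison identifies
\[
\DDD(\Ret^1 \pi_* \ZZ_p) \simrightarrow R^1\pi_*(\Omega^*_{\calAhat/\Bhat}), \qquad \DDD(R^2 f_* \ZZ_p(1)) \simrightarrow R^2 f_*(\Omega^*_{\Xhat/\Bhat})\{1\},
\]
as filtered Frobenius crystals, compatibly with Poincar\'e pairings; the $\{1\}$ on the right records the Tate twist $(1)$ on the left. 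Orthogonal projection off $c_1(L)$ is geometric and preserves all structures, so restricting to primitive cohomology yields $\DDD(\Pet(\ZZ_p(1))) \simrightarrow P^2_{\DR}(f)\{1\}$ with matching pairings.

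Next, I would apply the appendix to pull the Clifford functor through $\DDD$. Snowden's statement is precisely that $\DDD$ commutes with the even Clifford algebra construction applied to a free $\ZZ_p$-sheaf with nondegenerate symmetric pairing arising from geometry, and that the identification respects induced filtrations, connections, and Frobenius. Applied to primitive cohomology, this produces
\[
\DDD(\Cl_+(\Pet(\ZZ_p(1)))) \simrightarrow \Cl_+(P^2_{\DR}(f)\{1\})
\]
as filtered Frobenius crystals. On the right-hand side of \eqref{etaleclifford}, the fiberwise $C^+$-action on $\calAhat/\Bhat$ induces a $C^+$-action on $R^1\pi_*(\Omega^*_{\calAhat/\Bhat})$ which corresponds, under $\DDD$, to the $C^+$-action on $\Ret^1\pi_*\ZZ_p$. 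Combined with the Morita argument of Lemma \ref{morita} (which uses $p \nmid 2d$ to ensure local freeness of the $C^+$-centralizer on both sides and compatibility with base change), this gives
\[
\DDD(\End_{C^+}(\Ret^1\pi_*\ZZ_p)) \simrightarrow \End_{C^+}(R^1\pi_*(\Omega^*_{\calAhat/\Bhat})).
\]
Applying $\DDD$ to \eqref{etaleclifford} and composing with the two identifications above produces the desired isomorphism of filtered Frobenius crystals on $\Bhat$.

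The main technical obstacle is integrality. Classical comparison theorems often produce only filtered F-isocrystals after inverting $p$, whereas the proposition demands honest filtered Frobenius crystals with integral filtered pieces that are locally direct summands. The appendix is designed precisely to refine the Clifford compatibility integrally; the integrality on the abelian-scheme side is supplied by standard Dieudonn\'e/crystalline theory for abelian schemes over $\Bhat$. Finally, propagating the isomorphism from the generic locus $\Bhat[1/p]$ to all of $\Bhat$ relies on local freeness of filtered pieces (Example \ref{clifford-dR} and Lemma \ref{morita}) together with normality of $\Bhat$, which forces any isomorphism of locally free sheaves over the generic fiber that extends over codimension-one points to extend globally.
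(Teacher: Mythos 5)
Your rational skeleton matches the paper's (relative comparison functor, Clifford compatibility, Morita for the $C^+$-centralizer), but the integral step --- which is the whole content of Proposition \ref{clifford-crystal} beyond Lemma \ref{clifford-isocrystal} --- has a genuine gap. You invoke a \emph{relative} integral comparison functor $\DDD$ over $\Bhat$, taking crystalline lisse $\ZZ_p$-sheaves on $\Bhat[1/p]$ to filtered Frobenius crystals on $\Bhat$, compatible with tensor constructions, Chern classes, and the Clifford functor; no such relative integral theory with these compatibilities is available, and the paper deliberately avoids it. In particular you misstate the scope of Snowden's appendix: Proposition \ref{app1} there is a statement over $\Spec W$ (an unramified extension of $\QQ_p$, using Kisin and Breuil modules over $W$), i.e.\ a pointwise statement, not a statement about sheaves over $\Bhat$. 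The integral comparisons actually used in the paper (Fontaine--Laffaille $\DDD_{[a,b]}$, Fontaine--Messing, Lemmas \ref{abelian-comp} and \ref{clifford-comp}) are likewise all over $W$.

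Your mechanism for propagating integrality from $\Bhat[1/p]$ to $\Bhat$ also fails: the complement of $\Bhat[1/p]$ is the special fiber, which is a divisor, so ``normality plus extension over codimension-one points'' gives nothing unless you can verify integrality at the generic points of $B_{\kk}$ --- and those local rings are DVRs with imperfect residue fields, outside the reach of the integral comparison theorems you cite. The paper's actual route is different: keep the relative comparison purely rational (Lemma \ref{clifford-isocrystal}), check integrality only at $W$-points $\iota\colon \Spec W \to \Bhat$ using the pointwise Fontaine--Laffaille/Fontaine--Messing theory together with the appendix, and then apply Lemma \ref{brianlemma} (Conrad's lemma): an element of $R[1/p]$ whose specialization lies in $W$ for every $W$-point of the smooth scheme $\Shat$ already lies in $R$, because every $\kk$-point lifts to a $W$-point and $R\otimes\kk$ is reduced. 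That elementary lattice-extension argument, not a normality/codimension argument, is the missing idea; without it (or a genuinely relative integral comparison with Clifford compatibility, which would have to be established separately) your proof does not close.
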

We will prove this proposition using integral comparison theorems in $p$-adic Hodge theory, as we now explain.

\subsection{Relative $p$-adic Hodge theory}

It is easier to first explain the proof of this proposition after inverting $p$, using the rational version of the relative comparison theorem of \cite{faltings} (proven also in \cite{andreatta-iovita}).  
We will only work in the affine case, since everything can be made somewhat explicit in this setting.  Although the general definitions of crystalline sheaves and the comparison functor $\DD_{\Shat}$ are complicated, they can at least  be stated concretely in this case.

Let $R$ and $\Shat$ be as in Remark \ref{setup}.
Choose an algebraic closure $\Omega$ of the field of fractions $\mathrm{Frac}(R)$ and let $\Rbar$ be the union of the normalizations of $R$ in subfields $L \subset \Omega$, where $L$ ranges over finite extensions of 
 $\mathrm{Frac}(R)$ such that the normalization of $R[1/p]$ in $L$ is \'etale.
We have a canonical isomorphism 
$$\pi_1:= \pi_1(\Spec R[1/p], \Omega)= \mathrm{Gal}(\mathrm{Frac}(\Rbar)/\mathrm{Frac}(R)).$$

Let $\calB_{\cris}(\Rbar)$ denote the relative version of Fontaine's period ring, as defined in, e.g., Section $2.6$ of \cite{andreatta-iovita}.  We refer the reader to our references for the definition of this ring since it is very complicated.
It is a filtered $R[1/p]$-algebra, equipped with an action of $\pi_1$ above, a decreasing, exhaustive, and separated filtration by $R[1/p]$-submodules, an integrable connection $\Del$, and a Frobenius structure, such that the filtration is stable with respect to $\pi_1$ and the connection and Frobenius structure commute with this action. 

Given an \'etale $\QQ_p$-sheaf $\LL$ on $\Shat[1/p]$, having picked a base point $\Omega$, we can view it as a finite-dimensional $\QQ_p$ vector space $V$ equipped with a continuous action of $\pi_1$.  The (covariant) comparison functor $\DD_{\Shat}$ is given by the $R[1/p]$-module
$$\DD_{\Shat}(\LL) = (V \otimes_{\QQ_{p}} \calB_{\cris}(\Rbar))^{\pi_1}$$
where the Galois group acts via the diagonal action.  It inherits a filtration, connection, and Frobenius structure from those on $\calB_{\cris}(\Rbar)$.
\begin{defn}
The sheaf $\LL$ is crystalline if 
$\DD_{\Shat}(\LL)$ is a filtered F-isocrystal and
the natural map
$$\DD_{\Shat}(\LL) \otimes_{R} \calB_{\cris}(\Rbar) \rightarrow V \otimes_{\QQ_p} \calB_{\cris}(\Rbar)$$
is an isomorphism.
\end{defn}

We can now state Theorem $3.12$ of \cite{andreatta-iovita} (see also Theorem $2.6$ of \cite{faltings}), which relates
crystalline \'etale sheaves with filtered isocrystals.
\begin{prop}
Given $\Shat$ as in Remark \ref{setup}, $\DD_{\Shat}$ is a fully faithful functor from crystalline lisse \'etale $\QQ_p$-sheaves on $\Shat[1/p]$ to filtered F-isocrystals on $\Shat$.  It is compatible with tensor products, duals, and Tate twists.
Given a map $\iota: \widehat{S'} \rightarrow \Shat$ of such schemes over $W$, there is a natural equivalence of functors
$$\DD_{\widehat{S'}}\circ \iota^*_{\et} = \iota^*_{\DR}\circ\DD_{\Shat}$$ 
where $\iota^*_{\et}, \iota^*_{\DR}$ denote respectively base change functors on \'etale $\QQ_p$-sheaves and filtered F-isocrystals.
\end{prop}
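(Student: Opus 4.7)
My plan is to derive all four assertions from a single deep input: a relative version of Fontaine's fundamental exact sequence for the period ring $\calB_{\cris}(\Rbar)$. Specifically, I need to establish (a) that $\calB_{\cris}(\Rbar)^{\pi_1} = R[1/p]$, with compatible identifications of $\pi_1$-invariants on the filtered pieces, and (b) the existence of a distinguished period $t \in \Fil^1 \calB_{\cris}(\Rbar)$ with $\phi(t) = p\,t$ and on which $\pi_1$ acts through the cyclotomic character, satisfying
$$\bigl(\Fil^0 \calB_{\cris}(\Rbar)[1/t]\bigr)^{\pi_1,\,\phi=1} = \QQ_p.$$
This is the main obstacle of the entire program; its proof requires Faltings' almost purity theorem applied in the small \'etale site of $\Shat$, together with a detailed analysis of the structure of the rings $A_{\cris}(\Rbar)$ and $\calB_{\cris}^{+}(\Rbar)$. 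In the paper it is invoked black-box from Faltings and Andreatta--Iovita; everything below is essentially a formal consequence.

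Granting this input, I would prove full faithfulness by reduction to a single-sheaf statement. Given crystalline $\LL_1, \LL_2$ with underlying $\pi_1$-representations $V_1,V_2$, the internal Hom sheaf is again crystalline with underlying representation $\Hom_{\QQ_p}(V_1,V_2)$ (once tensor/dual compatibility below is in place), so it suffices to show that for any crystalline $\LL$ with underlying representation $V$, the natural map
$$V^{\pi_1} \;\longrightarrow\; \bigl(\Fil^0 \DD_{\Shat}(\LL)\bigr)^{\phi=1,\,\Del = 0}$$
is a bijection. Injectivity is formal from the definition of $\DD_{\Shat}$. For surjectivity, I would apply $\Fil^0(-)^{\pi_1,\phi=1}$ to the comparison isomorphism $\DD_{\Shat}(\LL)\otimes_{R[1/p]}\calB_{\cris}(\Rbar) \simrightarrow V \otimes_{\QQ_p}\calB_{\cris}(\Rbar)$ and invoke the fundamental exact sequence to extract exactly the $\QQ_p$-line of $\pi_1$-invariant, filtration-$0$, Frobenius-fixed vectors in the period ring.

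The tensor, dual, and Tate-twist compatibilities are then largely formal. For tensor products, the natural map $\DD_{\Shat}(\LL_1)\otimes_{R[1/p]}\DD_{\Shat}(\LL_2) \to \DD_{\Shat}(\LL_1 \otimes \LL_2)$ becomes an isomorphism after base change along $R[1/p] \to \calB_{\cris}(\Rbar)$ since both sides become $(V_1\otimes V_2)\otimes \calB_{\cris}(\Rbar)$, and compatibility with the filtration, connection, and Frobenius follows from the construction; duals are handled identically after replacing $\otimes$ by internal Hom. For Tate twists, the period $t$ provides a canonical trivialization identifying $\DD_{\Shat}(\QQ_p(1))$ with $R[1/p]\{-1\}$ as filtered F-isocrystals, after which $\DD_{\Shat}(\LL(n)) = \DD_{\Shat}(\LL)\{-n\}$ follows from the tensor compatibility just established.

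Finally, for base-change along $\iota: \widehat{S'} \to \Shat$, I would choose algebraic closures $\Omega \subset \Omega'$ compatibly with $\iota$ so as to obtain a ring homomorphism $\calB_{\cris}(\Rbar) \to \calB_{\cris}(\overline{R'})$ intertwining the group homomorphism $\pi_1(\widehat{S'}[1/p]) \to \pi_1(\Shat[1/p])$ and preserving filtration, connection, and Frobenius. Pulling back a crystalline sheaf $\LL$ on $\Shat[1/p]$ along $\iota$ and taking invariants yields a natural transformation $\iota^*_{\DR}\circ \DD_{\Shat} \;\longrightarrow\; \DD_{\widehat{S'}}\circ \iota^*_{\et}$, which is then verified to be an isomorphism after a further base change to $\calB_{\cris}(\overline{R'})$ by faithful flatness. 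Thus the entire edifice rests on step one; once the fundamental exact sequence is in hand, the rest is a sequence of diagram-chases and invariance computations in the period ring.
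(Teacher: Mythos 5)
The paper does not actually prove this proposition: it is quoted directly from the literature (Theorem $3.12$ of \cite{andreatta-iovita}, Theorem $2.6$ of \cite{faltings}), so there is no internal argument of the paper to compare yours against. What you propose is a sketch of how the cited theorem itself is established, deferring the genuinely hard content (almost purity and the fine structure of $\calB_{\cris}(\Rbar)$) to a black box; the overall shape --- compute invariants of the period ring, deduce full faithfulness via internal Hom, obtain tensor/dual/twist compatibility by base change to the period ring, and handle $\iota$ via a map of period rings --- is indeed how the references proceed, so as an outline of the literature proof it is reasonable.

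Read as a derivation from your single stated input, however, there are two concrete gaps. First, the fundamental exact sequence in the form you state, $\bigl(\Fil^0\calB_{\cris}(\Rbar)[1/t]\bigr)^{\pi_1,\phi=1}=\QQ_p$, is not what your surjectivity step needs: an element of $\Fil^0\DD_{\Shat}(\LL)$ is a $\pi_1$-invariant tensor $\sum v_i\otimes b_i$ whose individual periods $b_i$ are \emph{not} invariant, so you must apply a version of the exact sequence without Galois invariants, coordinatewise in a $\QQ_p$-basis of $V$, to get $\Fil^0(V\otimes\calB_{\cris}(\Rbar))^{\phi=1}=V$ (in the relative setting this is a delicate point, usually handled by also imposing horizontality $\Del=0$ and reducing to the absolute case), and only then take $\pi_1$-invariants. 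Relatedly, your full-faithfulness argument never uses the connection at all, even though a morphism of filtered F-isocrystals is required to be horizontal and this is precisely the extra structure the relative invariant computation exploits. Second, descending your tensor/dual comparison maps, and the base-change isomorphism for $\iota$, from $\calB_{\cris}(\Rbar)$ back to $R[1/p]$ requires (faithful) flatness of $R[1/p]\to\calB_{\cris}(\Rbar)$; unlike the classical case, where the base is a field, this is a substantial theorem of the relative theory and you invoke it silently (similarly, the filtration compatibilities do not follow ``from the construction'' without a strictness argument). Neither issue is fatal --- both are supplied by exactly the references the paper cites --- but they mean the proposition is not a formal consequence of your inputs (a)--(b) alone.
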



The remarkable feature of this functor is that, for proper smooth families, it takes \'etale cohomology to de Rham cohomology: (Prop. $4.1$ and Theorem $4.2$ in \cite{andreatta-iovita}; Theorem $6.3$ in \cite{faltings}).  Again, in what follows, $\Shat$ and $\widehat{S'}$
are as in Remark \ref{setup}.

\begin{prop}\label{comparison-isocrystal}
Given a proper smooth morphism $g: \widehat{Y} \rightarrow \Shat$, the \'etale $\QQ_p$ sheaf
$R_{\et}^{k}g_*(\QQ_p)$ is crystalline, and we have a natural isomorphism of filtered F-isocrystals
$$\Phi_g: R^{k}g_*(\Omega^*_{\widehat{Y}/\Shat})[1/p] \rightarrow \DD_{\Shat}(R_{\et}^{k}g_*(\QQ_p))$$
that is compatible with Chern classes of line bundles, Poincar\'e duality, pullback along morphisms $\widehat{S'} \rightarrow \Shat$, 
and pullback along morphisms $\widehat{Y'} \rightarrow \widehat{Y}$ between proper smooth schemes over $\Shat$.
\end{prop}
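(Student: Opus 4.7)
This is the relative $p$-adic comparison theorem in the formulation we need, and my plan is simply to deduce it from the cited results of \cite{faltings, andreatta-iovita}. Let me describe what those give. The relative period ring $\calB_{\cris}(\Rbar)$ is constructed so as to admit a canonical de Rham--\'etale comparison: \'etale-locally on $\Shat$ there is a natural morphism
\[
H^k_{\DR}(\widehat{Y}/\Shat)[1/p] \otimes_{R[1/p]} \calB_{\cris}(\Rbar) \longrightarrow H^k_{\et}(\widehat{Y}_{\Rbar}, \QQ_p) \otimes_{\QQ_p} \calB_{\cris}(\Rbar)
\]
which is $\pi_1$-equivariant and respects filtration, connection, and Frobenius. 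Taking $\pi_1$-invariants on the right identifies with $\DD_{\Shat}(R^k_{\et}g_*\QQ_p)$ by definition of the functor, and the image of the left-hand side lands in this subspace, producing $\Phi_g$.

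The substantive content of the cited theorems is then (a) that the displayed morphism is an isomorphism after both sides are further tensored up to $\calB_{\cris}(\Rbar)$, which proves simultaneously that $R^k_{\et}g_*\QQ_p$ is crystalline and that $\Phi_g$ is an isomorphism of filtered F-isocrystals, and (b) that filtration, connection, and Frobenius match on the nose. Step (b) is essentially built into the construction of $\calB_{\cris}(\Rbar)$. Step (a) is the main obstacle: one reduces, \'etale-locally on $\Shat$, to the ``small'' case where almost purity gives fine control over the continuous Galois cohomology of $\calB_{\cris}(\Rbar)$, and then compares a Leray-type spectral sequence for the Faltings site against its crystalline analogue via a relative Poincar\'e lemma.

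The naturality statements---Chern classes, Poincar\'e duality, and pullback along $\widehat{S'}\to\Shat$ and $\widehat{Y'}\to\widehat{Y}$---reduce via the base-change compatibility of $\DD_{\Shat}$ recorded in the preceding proposition to the corresponding assertions for the absolute Faltings comparison on geometric fibers, where they are classical. In the paper we would simply cite \cite[Prop.~4.1, Thm.~4.2]{andreatta-iovita} (equivalently, \cite[Thm.~6.3]{faltings}) and note that each compatibility is already handled in those references.
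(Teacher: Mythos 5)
Your proposal is correct and matches the paper exactly: the paper offers no proof of this proposition, simply citing Prop.~4.1 and Thm.~4.2 of \cite{andreatta-iovita} and Thm.~6.3 of \cite{faltings}, which is precisely what you do (your sketch of the internals of those references is accurate but not needed here).
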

With all this technology in place, proving Proposition \ref{clifford-crystal} on the level of isocrystals is easy.

We first show that the comparison functor $\DD_{\Shat}$ is compatible with both the Clifford construction and passing to $C^+$-invariants. 

\begin{lem}\label{clifford-iso-comp}
Given a crystalline \'etale $\QQ_p$-sheaf $\LL$ on $\Shat$ with symmetric pairing
$$\psi_{\et}:  \LL \otimes_{\QQ_p} \LL \rightarrow \QQ_p,$$
let $\calE = \DD_{\Shat}(\LL)$ be its associated filtered F-isocrystal and let
$$\psi_{\cris} = \DD_{\Shat}(\psi_{\et}): \calE \otimes_{\Shat[1/p]} \calE \rightarrow \calO_{\Shat[1/p]}$$
be the associated pairing on $\calE$.
Then there is a natural isomorphism of filtered F-isocrystals
$$\Cl_{+}(\calE, \psi_{\cris}) \simrightarrow \DD_{\Shat}( \Cl_{+}(\LL,\psi_{\et})).$$
\end{lem}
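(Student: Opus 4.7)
The strategy is to exploit that $\DD_{\Shat}$ is a fully faithful exact tensor functor from crystalline $\QQ_p$-sheaves on $\Shat[1/p]$ to filtered $F$-isocrystals on $\Shat$, compatible with tensor products, duals, direct sums, and Tate twists. Since the even Clifford algebra is a natural construction built functorially out of the tensor algebra together with the bilinear form, and since $\psi_{\cris} = \DD_{\Shat}(\psi_{\et})$ by the very definition of $\psi_{\cris}$, the Clifford algebras on both sides should correspond under $\DD_{\Shat}$.

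First I would observe that for each $k\geq 0$, tensor compatibility yields a canonical isomorphism of filtered $F$-isocrystals $\DD_{\Shat}(\LL^{\otimes k}) \simrightarrow \calE^{\otimes k}$ that is equivariant for the natural permutation action of the symmetric group $\mathfrak{S}_k$. Because we work with $\QQ_p$-coefficients, the antisymmetrizer idempotent $e_- = \tfrac{1}{k!}\sum_{\sigma\in\mathfrak{S}_k}\mathrm{sgn}(\sigma)\,\sigma$ is defined, so extracting its image identifies $\DD_{\Shat}(\bw{k}\LL) \simrightarrow \bw{k}\calE$ canonically as filtered $F$-isocrystals; similarly for other Schur-functor components of $\LL^{\otimes k}$.

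Next I would use the canonical antisymmetrization isomorphism of filtered modules $\bigoplus_{k} \bw{2k} M \simrightarrow \Cl_{+}(M,\psi)$ available in characteristic zero (sending $v_1\wedge\cdots\wedge v_{2k}$ to $\tfrac{1}{(2k)!}\sum_{\sigma}\mathrm{sgn}(\sigma)\,v_{\sigma(1)}\cdots v_{\sigma(2k)}$ in the Clifford algebra), where the left-hand side carries the grading filtration and the right-hand side the length filtration inherited from the tensor algebra. This map is natural in the pair $(M,\psi)$, and its construction uses only tensor products, direct sums, and the pairing. Applying the previous paragraph and the hypothesis $\psi_{\cris}=\DD_{\Shat}(\psi_{\et})$ gives the desired isomorphism on underlying filtered modules.

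Finally, the remaining structures come along for free: the ideal in $T\LL$ cutting out the Clifford algebra is generated by the image of the morphism of crystalline sheaves $\LL\to T^{\leq 2}\LL$, $v\mapsto v\otimes v - \psi_{\et}(v,v)\cdot 1$, and by functoriality of $\DD_{\Shat}$ (together with its compatibility with tensor products and the pairing) its image is the analogous morphism for $(\calE,\psi_{\cris})$; hence the Frobenius and connection on $\DD_{\Shat}(\Cl_{+}(\LL,\psi_{\et}))$ match those on $\Cl_{+}(\calE,\psi_{\cris})$. I expect the main technical issue to be verifying exactness of $\DD_{\Shat}$ on the quotient that defines the Clifford algebra, i.e.\ that the quotient of crystalline sheaves by a Clifford ideal is again crystalline and that $\DD_{\Shat}$ commutes with this quotient; once one uses the antisymmetrization splitting this is bypassed, since everything reduces to $\DD_{\Shat}$'s commutation with finite direct sums and exterior powers of finite rank sheaves, which follows from the symmetric monoidal structure in characteristic zero.
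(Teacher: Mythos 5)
Your route differs from the paper's: there, one writes down the natural map $\Cl_{+}(\calE,\psi_{\cris}) \rightarrow \DD_{\Shat}(\Cl_{+}(\LL,\psi_{\et}))$ directly from the formula for $\DD_{\Shat}$, notes that both sides are locally free of the same rank, and proves surjectivity by comparing with the map $\bigoplus_{k}\calE^{\otimes 2k} \rightarrow \bigoplus_{k}\DD_{\Shat}(\LL^{\otimes 2k})$, which is an isomorphism in each degree by tensor compatibility and which maps onto the Clifford algebras. Your decomposition through the antisymmetrization (Chevalley) map together with Schur-functor compatibility of $\DD_{\Shat}$ is a legitimate alternative in principle, but as written it has a gap in the filtration bookkeeping.

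The filtration on $\Cl_{+}(\calE,\psi_{\cris})$ that the lemma refers to is not the length filtration inherited from tensor degree; it is the filtration induced from $\Fil^{\bullet}\calE$ through the quotient from the even tensor algebra, exactly as in Example \ref{clifford-dR} (this is the filtration that later feeds into Griffiths transversality and the Kodaira--Spencer argument). So the standard fact you invoke --- that antisymmetrization identifies $\bigoplus_{k}\bw{2k}\calE$ with $\Cl_{+}(\calE,\psi_{\cris})$ compatibly with the length filtration --- does not by itself yield an isomorphism of \emph{filtered} F-isocrystals. What you need, and what is true but requires an argument you have not given, is strict compatibility of the antisymmetrization map with the induced Hodge-type filtrations. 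One direction is immediate; for the other, straighten a Clifford monomial lying in $\Fil^{j}$ in terms of a local filtered basis of $\calE$: each contraction term carries a factor $\psi_{\cris}(v,w)$, and since $\psi_{\cris}=\DD_{\Shat}(\psi_{\et})$ is a filtered map to the trivially filtered unit, $\psi_{\cris}(\Fil^{a}\otimes\Fil^{b})=0$ whenever $a+b>0$, so no straightening step drops filtration degree. (The \'etale side causes no such problem, since $\QQ_p$-sheaves carry no filtration, so $\DD_{\Shat}(\Cl_{+}(\LL,\psi_{\et}))\cong\bigoplus_{k}\bw{2k}\calE$ does follow from your first two steps.) A smaller slip: the assignment $v\mapsto v\otimes v-\psi_{\et}(v,v)$ is not a morphism of sheaves (it is not additive); the Clifford ideal should be described as generated by the image of $\Sym^{2}\LL$ via $vw\mapsto v\otimes w+w\otimes v-2\psi_{\et}(v,w)$, which is what makes your closing functoriality argument for $\Del$ and the Frobenius correct.
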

\begin{proof}
Note that to define $\psi_{\cris}$, we use the compatibility of $\DD_{\Shat}$ with tensor products.  
It follows from the formula for $\DD_{\Shat}$ and the definition of $\psi_{\cris}$ that there is a natural map of filtered F-isocrystals 
$$\Cl_{+}(\calE, \psi_{\cris}) \rightarrow \DD_{\Shat}(\Cl_{+}(\LL,\psi_{\et})).$$
Since the underlying sheaves of locally free $\Shat[1/p]$-modules have the same rank, it suffices to show this map is surjective.  Moreover, this map is compatible 
with the natural map
$$\bigoplus_{k = 0}^{\mathrm{rk}\LL/2}(\calE^{\otimes 2k}) \rightarrow \bigoplus_{k =0}^{\mathrm{rk}\LL/2}\DD_{\Shat}(\LL^{\otimes 2k}).$$
Since $\DD_{\Shat}$ is compatible with tensor product, this map is an isomorphism on each graded piece and, in particular surjective.  Since the Clifford algebra is a quotient of the above direct sum, we are done.
\end{proof}

We consider the abelian scheme $\pi: \calAhat \rightarrow \Bhat$, equipped with the action 
$C^+ \rightarrow \End_{\Bhat}(\calAhat)$.  Both the \'etale $\QQ_p$-sheaf $\Ret^{1}\pi_*(\QQ_p)$
and the filtered F-isocrystal $R^1\pi_*(\Omega^*_{\calAhat/\Bhat})[1/p]$ inherit actions of $C^+$.

We argue that these actions are compatible with comparison theorems.

\begin{lem}\label{abelian-iso-comp}
Given our setup above,
\begin{enumerate}
\item 
The action of $C^+$ on $\Ret^{1}\pi_*(\QQ_p)$ induces an action on $\DD_{\Bhat}(\Ret^{1}\pi_*(\QQ_p))$ such that the comparison isomorphism
$$\Phi_\pi: R^1\pi_*(\Omega^*_{\calAhat/\Bhat})[1/p] \simrightarrow \DD_{\Bhat}(\Ret^{1}\pi_*(\QQ_p))$$
intertwines the action of $C^+$.
\item 
There is a natural isomorphism
$$\Phi_\pi: \End_{C^+}(R^1\pi_*\Omega^*_{\calAhat/\Bhat})[1/p] \simrightarrow
\DD_{\Bhat} (\End_{C^+}(\Ret^1\pi_*(\QQ_p))).$$
\end{enumerate}
\end{lem}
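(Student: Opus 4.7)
The plan is to deduce both parts from functoriality of the comparison isomorphism $\Phi_\pi$ of Proposition \ref{comparison-isocrystal}, together with the compatibility of $\DD_{\Bhat}$ with tensor products and duals.

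For (i), note that the $C^+$-action on $\Ret^1\pi_*(\QQ_p)$ comes by functoriality from the action $C^+ \to \End_{\Bhat}(\calAhat)$ on the abelian scheme itself, and the same is true of the induced action on relative de Rham cohomology. Thus for each $c \in C^+$ one obtains a morphism $[c]: \calAhat \to \calAhat$ of proper smooth $\Bhat$-schemes, and Proposition \ref{comparison-isocrystal} asserts that $\Phi_\pi$ is natural with respect to pullback by such morphisms. This gives a commutative square intertwining $[c]^*$ on de Rham cohomology with $\DD_{\Bhat}([c]^*)$ on the comparison side; varying $c$, the $C^+$-action on $\DD_{\Bhat}(\Ret^1\pi_*\QQ_p)$ thus defined is compatible with $\Phi_\pi$ and the two actions on $R^1\pi_*(\Omega^*_{\calAhat/\Bhat})[1/p]$. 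The only routine check is that the action assembled in this way is $R[1/p]$-linear and respects the filtered F-isocrystal structure, which is automatic since each $\DD_{\Bhat}([c]^*)$ is a morphism in the target category.

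For (ii), one rewrites $\End_{C^+}(-)$ categorically inside the category of filtered F-isocrystals (respectively crystalline $\QQ_p$-sheaves). Concretely, writing $\calE := R^1\pi_*(\Omega^*_{\calAhat/\Bhat})[1/p]$ and $\LL := \Ret^1\pi_*(\QQ_p)$, the $C^+$-action gives a map $\alpha: C^+ \otimes_{\QQ_p} (\calE \otimes \calE^\vee) \to \calE \otimes \calE^\vee$ (and similarly $\alpha_{\et}$ on the étale side) whose equalizer with the right action identifies $\End_{C^+}$ as a subobject of $\End = \calE \otimes \calE^\vee$. Since $\DD_{\Bhat}$ is compatible with tensor products and duals, $\DD_{\Bhat}(\End(\LL)) = \End(\calE)$, and by part (i) this identification matches $\DD_{\Bhat}(\alpha_{\et})$ with $\alpha$. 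It follows that $\DD_{\Bhat}$ carries the equalizer defining $\End_{C^+}(\LL)$ to the equalizer defining $\End_{C^+}(\calE)$, giving the desired isomorphism $\Phi_\pi: \End_{C^+}(\calE) \simrightarrow \DD_{\Bhat}(\End_{C^+}(\LL))$.

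The only subtlety I anticipate is checking that $\DD_{\Bhat}$, which is only known to be fully faithful with image in filtered F-isocrystals, actually commutes with the formation of such equalizers; this is where compatibility with duals and tensor products, plus the fact that the filtered pieces and connection all descend from $\End(\calE)$, are used. Once this is in place, the full statement follows directly, and combined with Lemma \ref{clifford-iso-comp} it provides the isocrystal version of Proposition \ref{clifford-crystal}.
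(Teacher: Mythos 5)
Part (i) of your proposal is fine and is exactly the paper's argument: the $C^+$-action comes from endomorphisms $\gamma:\calAhat\to\calAhat$ over $\Bhat$, and the pullback-compatibility in Proposition \ref{comparison-isocrystal} gives the intertwining.

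For part (ii), however, the step you set aside as ``the only subtlety'' is the actual mathematical content of the lemma, and the tools you propose for it (compatibility of $\DD_{\Bhat}$ with duals and tensor products, plus the fact that filtration and connection descend from $\End(\calE)$) do not settle it. Writing $\LL=\Ret^1\pi_*(\QQ_p)$ and $\calE=R^1\pi_*(\Omega^*_{\calAhat/\Bhat})[1/p]$, what you get for free is only one inclusion: since $\DD_{\Bhat}(-)=\left(-\otimes_{\QQ_p}\calB_{\cris}(\Abar)\right)^{\pi_1}$ is left exact and, by part (i), intertwines the commutator maps with $C^+$, you obtain $\DD_{\Bhat}(\End_{C^+}(\LL))\hookrightarrow \End_{C^+}(\calE)$. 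The point to prove is the reverse inclusion, equivalently that the subobject $\End_{C^+}(\LL)\subset\End(\LL)$ is itself crystalline and that $\DD_{\Bhat}$ of it has full rank; a fully faithful tensor functor has no general reason to commute with equalizers of this kind, so ``it follows that $\DD_{\Bhat}$ carries the equalizer to the equalizer'' is unjustified as written. The missing input is the structure of $C^+$ after inverting $2d$: since $p\nodivide 2d$, $C^+\otimes_{\ZZ}W$ is a matrix algebra (this is the content of Lemma \ref{morita}), equivalently $C^+\otimes\QQ_p$ is separable. The paper uses this, arguing as in Lemma \ref{morita}, to justify the chain
$$\left(\End_{C^+}(V)\otimes\calB_{\cris}(\Abar)\right)^{\pi_1}=\left(\left(\End(V\otimes\calB_{\cris}(\Abar))\right)^{\pi_1}\right)^{C^+}=\End_{C^+}\!\left((V\otimes\calB_{\cris}(\Abar))^{\pi_1}\right),$$
i.e.\ that forming the $C^+$-centralizer commutes both with $-\otimes\calB_{\cris}(\Abar)$ and with taking $\pi_1$-invariants. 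If you prefer your categorical formulation, the same input can be packaged as a separability idempotent $e\in C^+\otimes (C^+)^{\mathrm{op}}$ acting on $\End(\LL)$: it realizes $\End_{C^+}(\LL)$ as the image of an idempotent endomorphism of the crystalline sheaf $\End(\LL)$, and any additive functor, in particular $\DD_{\Bhat}$, preserves images of idempotents; by part (i) the corresponding idempotent on $\End(\calE)$ cuts out $\End_{C^+}(\calE)$, giving the desired equality together with the filtered F-isocrystal structure. One way or another, the (semi)simplicity of $C^+$ away from $2d$ must enter; without it your plan for (ii) is incomplete.
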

\begin{proof}
Given $\gamma: \calAhat \rightarrow \calAhat$ over $\Bhat$, $\Phi_\pi$ is compatible with pullback along $\gamma$, by Proposition \ref{comparison-isocrystal}.  This gives the first claim.

For the second, since $\DD_{\Bhat}$ is compatible with duals and tensor products, we have
an isomorphism
$$\DD_{\Bhat}(\End(\Ret^1\pi_*(\QQ_p)) \rightarrow \End(\DD_{\Bhat}((\Ret^1\pi_*(\QQ_p)).$$
Consider $\Ret^1\pi_*(\QQ_p)$ as a vector space $V$ with an action of $\pi_1:= \pi_1(\mathrm{Spec}(\Bhat[1/p]),\bbar)$.
Given any element $\gamma \in C^+$, it gives an endomorphism of $V$ that commutes with the action of $\pi_{1}$.

Arguing as in Lemma \ref{morita}, we have
\begin{align*}
(\End_{C^+}(V) \otimes \calB(\Abar))^{\pi_1} &= (\End_{\calB}(V \otimes \calB(\Abar)))^{\pi_1,C^+} \\&= \left((\End_{\calB}(V\otimes \calB(\Abar)))^{\pi_1}\right)^{C^+} = 
\End_{C^+}((V\otimes\calB(\Abar))^{\pi_1}).
\end{align*}
\end{proof}

First, observe that $\Phi_f$ induces a natural isomorphism of filtered F-isocrystals
$$\Phi_f\{1\}:  P^2_{\DR}(f)\{1\}[1/p] \rightarrow \DD_{\Bhat}(\Pet\QQ_p(1)).$$
Indeed, this follows from the compatibility of $\Phi_f$ with taking Chern classes of $L$ and with taking the orthogonal complement with respect to the Poincar\'e pairing, as stated in Proposition \ref{comparison-isocrystal}.
Furthermore, $\Phi_f\{1\}$ intertwines the nondegenerate pairings
$\DD_{\Bhat}(\psi_{\et})$ and $\psi_{\DR}$.

We therefore have an isomorphism
\begin{align*}
\Cl_+(P^2_{\DR}(f)\{1\})[1/p] &= \Cl_+( P^2_{\DR}(f)\{1\}[1/p] ) = 
\Cl_+(\DD_{\Bhat}(\Pet\QQ_p(1))) \\&= \DD_{\Bhat}(\Cl_+(\Pet\QQ_p(1)))
= \DD_{\Bhat}(\End_{C^{+}}(R^1\pi_*\QQ_p))
=\End_{C^{+}}(R^1\pi_*\Omega^*_{\calAhat/\Bhat})[1/p].
\end{align*}

We have proven the following weak version of Proposition \ref{clifford-crystal}

\begin{lem}\label{clifford-isocrystal}
There exists an isomorphism of filtered F-isocrystals
$$\Psi_{\Bhat[1/p]}: \Cl_{+}( P^2_{\DR}(f)\{1\})[1/p] \simrightarrow \End_{C^{+}}(R^1\pi_*\Omega^*_{\calAhat/\Bhat})[1/p].
$$
\end{lem}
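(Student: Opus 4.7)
The plan is to assemble $\Psi_{\Bhat[1/p]}$ as a composite of four canonical isomorphisms, each reflecting a compatibility of the comparison functor $\DD_{\Bhat}$ with a structural operation: (i) the relative de Rham/$p$-adic \'etale comparison for $f$, (ii) the Clifford construction, (iii) the \'etale Clifford--endomorphism identification from Proposition \ref{kugasatakerizov}(iii), and (iv) the formation of $C^+$-centralizers for $H^1$ of $\pi$.

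First I would invoke Proposition \ref{comparison-isocrystal} applied to $f: \Xhat \to \Bhat$ to obtain the comparison isomorphism $\Phi_f$ on $H^2$. Its compatibility with Chern classes ensures that $\Phi_f$ matches the lines spanned by $c_1(L)$ on the two sides, and its compatibility with Poincar\'e duality makes $\Phi_f$ an isometry; consequently it descends to an isomorphism of primitive cohomology. Tate-twisting yields
$$\Phi_f\{1\}: P^2_{\DR}(f)\{1\}[1/p] \simrightarrow \DD_{\Bhat}(\Pet \QQ_p(1))$$
as filtered F-isocrystals, intertwining $\psi_{\DR}$ with $\DD_{\Bhat}(\psi_{\et})$.

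Next, since $\Phi_f\{1\}$ is an isometry, functoriality of the Clifford construction supplies an isomorphism $\Cl_+(P^2_{\DR}(f)\{1\})[1/p] \simrightarrow \Cl_+(\DD_{\Bhat}(\Pet \QQ_p(1)))$, which Lemma \ref{clifford-iso-comp} identifies with $\DD_{\Bhat}(\Cl_+(\Pet \QQ_p(1)))$. Applying $\DD_{\Bhat}$ to the \'etale Clifford--endomorphism identification from Proposition \ref{kugasatakerizov}(iii), tensored with $\QQ_p$, rewrites this as $\DD_{\Bhat}(\End_{C^+}(\Ret^1\pi_*\QQ_p))$, and Lemma \ref{abelian-iso-comp}(ii) identifies this last term with $\End_{C^+}(R^1\pi_*\Omega^*_{\calAhat/\Bhat})[1/p]$. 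Composing the four isomorphisms produces $\Psi_{\Bhat[1/p]}$.

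Most of the substance of the lemma has already been discharged by Lemmas \ref{clifford-iso-comp} and \ref{abelian-iso-comp} together with Proposition \ref{comparison-isocrystal}; the chief task in writing out the proof is just to confirm that each step in the chain respects the filtration, connection, and Frobenius structures, which is part of the cited statements. The only delicate point I expect is the passage to the primitive summand: one must know that $\Phi_f$ carries the line $\calO_{\Bhat[1/p]}\cdot c_1(L)$ to its $\DD_{\Bhat}$-counterpart and its orthogonal complement to the orthogonal complement under the induced pairing. This is guaranteed by the Chern-class and Poincar\'e-duality compatibilities in Proposition \ref{comparison-isocrystal}, so no genuine obstacle arises.
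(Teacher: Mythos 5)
Your proposal is correct and follows essentially the same route as the paper: the paper's proof is precisely the composite of $\Phi_f\{1\}$ (obtained from Proposition \ref{comparison-isocrystal} via the Chern-class and Poincar\'e-duality compatibilities), Lemma \ref{clifford-iso-comp}, the functor $\DD_{\Bhat}$ applied to the \'etale identification \eqref{etaleclifford} from Proposition \ref{kugasatakerizov}(iii), and Lemma \ref{abelian-iso-comp}(ii). No substantive difference from the paper's argument.
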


\subsection{Integral statements}\label{section-integral}
To complete the proof of Proposition \ref{clifford-crystal}, we need to show that the isomorphism in Lemma \ref{clifford-isocrystal} comes from an isomorphism of crystals.  This will come from the fact that the isomorphism \eqref{etaleclifford} is itself integral, and integral versions of the comparison theorems.

In fact, we only need these statements over $W$ rather than a general base, thanks to Lemma \ref{clifford-isocrystal} and the following lemma.  We are grateful to Brian Conrad for showing us a simpler argument for the following.  In what follows, let $R$ and $\Shat$ be as in Remark \ref{setup}.
\begin{lem}\label{brianlemma}
Given filtered Frobenius crystals $\calE$ and $\calF$ over $\Shat$, with an isomorphism 
$$\psi: \calE[1/p] \rightarrow \calF[1/p]$$
as filtered F-isocrystals on $\Shat[1/p]$.
Suppose that for every $W$-point
$$\iota: \mathrm{Spec}W \rightarrow \Shat,$$
the pullback isomorphism
$\iota^*\psi$
extends to an isomorphism
$$\Psi_{\iota}: \iota^*\calE \rightarrow \iota^*\calF$$
of filtered Frobenius crystals on $\Spec W$.
Then
$\psi$ extends to an isomorphism
$\Psi: \calE \rightarrow \calF$ of filtered Frobenius crystals on $\Shat$.
\end{lem}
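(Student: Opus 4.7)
The plan is to reduce Lemma \ref{brianlemma} to a module-theoretic divisibility question and resolve it using the density of $W$-points on $\Shat$ afforded by $p$-adic formal smoothness over $W$.

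Set $M := \Hom_R(\calE,\calF)$; since $\calE$ and $\calF$ are locally free $R$-modules of finite rank, $M$ is finitely generated projective and satisfies $M[1/p] = \Hom_{R[1/p]}(\calE[1/p],\calF[1/p])$. Write $\psi = p^{-k}m$ with $m \in M$ and $k \geq 0$ minimal. The goal is to show $k = 0$. Once this is done, the integral morphism $\Psi := m$ automatically intertwines the filtrations (each $\Fil^i\calF$ is locally a direct summand of $\calF$, so $\Fil^i\calF[1/p]\cap \calF = \Fil^i\calF$), the connections, and the Frobenius structures, because each such compatibility is an identity of morphisms valued in a $p$-torsion-free sheaf that already holds after inverting $p$. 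Applying the same argument to $\psi^{-1}$ produces an integral inverse, so $\Psi$ is an isomorphism of filtered Frobenius crystals on $\Shat$.

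Suppose for contradiction that $k \geq 1$. Then $\bar m \neq 0$ in $M/pM$. By Remark \ref{setup}, $R$ is the $p$-adic completion of a smooth finitely generated $W$-algebra, so $R/pR$ is smooth and of finite type over $\kk$, and $M/pM$ is a finitely generated projective $R/pR$-module. Trivializing $M/pM$ locally shows that $\bar m$ is nonzero on a nonempty Zariski open $U \subseteq \Spec(R/pR)$; by the Nullstellensatz, $U$ contains a $\kk$-point $\bar\iota$ at which $\bar m$ specializes to a nonzero vector.

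Finally, lift $\bar\iota$ to a $W$-point $\iota:\Spec W \to \Shat$. Formal smoothness of $R$ over $W$ (inherited from the underlying smooth algebra) lets one inductively lift $\bar\iota:R\to \kk$ to compatible ring maps $\iota_n:R\to W/p^n$; the system assembles into a map $\iota:R\to W$ because each $\iota_n$ factors through $R/p^nR$ and $R$ is $p$-adically complete. By hypothesis, $\iota^*\psi = p^{-k}\iota^*m$ is a morphism of integral filtered Frobenius crystals on $\Spec W$, which forces $\iota^*m \in p^k\iota^*M \subseteq p\iota^*M$. But the reduction of $\iota^*m$ modulo $p$ equals the specialization of $\bar m$ at $\bar\iota$, which is nonzero by construction. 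This contradiction yields $k = 0$. The only nontrivial step is the $W$-point lifting argument, which is a standard consequence of formal smoothness and $p$-adic completeness; the compatibility of $\Psi$ with the additional filtration, connection, and Frobenius data requires no separate work once $\Psi$ has been produced integrally.
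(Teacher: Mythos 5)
Your proof is correct and follows essentially the same route as the paper's: both reduce the lemma to an integrality statement (the paper via matrix entries $f\in R[1/p]$ after shrinking to the free case, you via the element $m$ of $\Hom_R(\calE,\calF)$), both deduce it from the hypothesis by lifting $\kk$-points of the reduced finite-type algebra $R/pR$ to $W$-points using smoothness and $p$-adic completeness, and both observe that compatibility with the filtration, connection, and Frobenius, as well as invertibility, can be checked after inverting $p$ (applying the argument to $\psi^{-1}$ for the inverse).
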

\begin{proof}
It suffices to show that the isomorphism of locally free $R[1/p]$-modules extends uniquely to an isomorphism of $R$-modules, since the compatibility with $\Del$ and $\phi$ can be checked after inverting $p$.  We will show it extends to a map of $R$-modules; by applying the same result to the inverse (defined over $R[1/p]$), we see that the map will be an isomorphism.  Furthermore, by shrinking $R$, we can assume that $\calE$ and $\calF$ are free modules.

By studying matrix elements of the map, we are left to the following.  Given an element $f \in R[1/p]$ such that its specialization in $W[1/p]$ lies in $W$ for all maps of $W$-algebras $R \rightarrow W$, we want to show that $f \in R$.

Pick $r \geq 0$ such that $F = p^r f \in R$ and suppose $r > 0$.  Since $R$ is smooth, every $\kk$-point of $\Shat$ lifts to a $W$-point, for which the specialization of $F$ is divisible by $p$. In particular, $F \otimes \kk$ is contained in every maximal ideal of $R\otimes \kk$ which is a reduced algebra of finite type, so $F \in p R$, and we can replace $r$ with $r-1$.  Continuing inductively, we have that $f \in R$.
\end{proof}

\begin{defn} Given a crystalline representation $V$ of $\mathrm{Gal}(\overline{K}/K)$, its Hodge-Tate weights are the degrees of the nonzero graded pieces of the filtered F-isocrystal $\DD_{W}(V)$ with respect to its filtration.
\end{defn}

\begin{defn}
A filtered Frobenius crystal $M$ over $W$ with weights contained in $[0,b]$ is called a strongly divisible lattice if 
\begin{enumerate}
\item the associated isocrystal $M\otimes K$ is in the essential image of $\DD_{W}$,
\item $\phi(\Fil^k M) \subset p^{k}M$,
\item and $\sum_{k \geq 0} p^{-k}\phi(\Fil^k M) = M.$
\end{enumerate}
A filtered Frobenius crystal $M$ with weights contained in $[a,b]$ is an $a$-strongly divisible lattice if $M\{a\}$ is strongly divisible.
\end{defn}

Let us recall now a minor modification of some results in Fontaine-Laffaille theory, which are usually stated for weights in the range $[0,a]$, see \cite{BreuilMessing} for an overview.  The statement here is easily obtained from those by Tate twisting, using tensor product compatibility.  
A formula for the inverse functor is given in the Appendix, with more details provided in \cite{bhatt-snowden}.

\begin{prop}
Fix an interval of Hodge-Tate weights $[a,b]$ with $p > b-a+1$.
There exists an equivalence $\DDD_{[a,b]}$ between the category of Galois-stable lattices inside crystalline representations with Hodge-Tate weights contained in $[a,b]$ and the category of $a$-strongly divisible modules with the same range of weights.   This functor is compatible with tensor product of two lattices, provided that the tensor product also has Hodge-Tate weights in the range $[a,b]$.  We have the similar statement for duals.
\end{prop}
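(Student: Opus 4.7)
The plan is to reduce this to the classical Fontaine--Laffaille equivalence for weights in $[0, p-2]$ (as reviewed in \cite{BreuilMessing}) by Tate twisting. Since $p > b-a+1$, we have $[0, b-a] \subseteq [0, p-2]$, so the classical theory provides an equivalence $\DDD_{\FL}$ between Galois-stable lattices in crystalline representations with Hodge--Tate weights in $[0, b-a]$ and strongly divisible filtered Frobenius crystals with weights in the same range, compatibly with tensor products and duals (in the appropriate ranges).

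I would then define the functor $\DDD_{[a,b]}$ by the formula
$$\DDD_{[a,b]}(T) \;:=\; \DDD_{\FL}(T \otimes_{\ZZ_p} \ZZ_p(-a)) \otimes_W W\{-a\}.$$
The inner Tate twist shifts weights to $[0, b-a]$ so the classical functor applies, while the outer twist by $W\{-a\}$ (which has Hodge--Tate weight $a$ in the paper's normalization, since $W\{-1\}$ has $\Fil^1$ nonzero and Frobenius $p$) shifts back to $[a, b]$. The resulting object is $a$-strongly divisible essentially by definition, since $\DDD_{[a,b]}(T)\{a\} = \DDD_{\FL}(T \otimes \ZZ_p(-a))$ is strongly divisible by the classical theorem. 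Since the two Tate twists on either side are mutually inverse equivalences of the relevant categories and $\DDD_{\FL}$ is an equivalence, so is $\DDD_{[a,b]}$. Consistency with the rational comparison functor $\DD_W$ on isocrystals is immediate from $\DD_W(\ZZ_p(-a)) = W\{a\}$ and the tensor compatibility of $\DD_W$; this is what pins down the normalization.

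Finally, the tensor product and duality compatibilities will follow from the corresponding properties of $\DDD_{\FL}$ by Tate-twist bookkeeping. For tensors, if $T_1$, $T_2$, and $T_1 \otimes T_2$ all have weights in $[a,b]$ (so that their Tate twists by $\ZZ_p(-a)$ all have weights in $[0, b-a]$), then the identity $(T_1 \otimes T_2) \otimes \ZZ_p(-a) = [(T_1 \otimes \ZZ_p(-a)) \otimes (T_2 \otimes \ZZ_p(-a))] \otimes \ZZ_p(a)$ combined with $W\{a\} \otimes W\{-2a\} = W\{-a\}$ matches the two sides after applying $\DDD_{\FL}$ and its tensor compatibility. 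Duality is handled analogously, using $\ZZ_p(-a)^\vee = \ZZ_p(a)$ and $W\{-a\}^\vee = W\{a\}$. The main (and essentially only) obstacle is keeping track of these Tate-twist shifts to make sure the normalization is consistent; the substantive input is the classical Fontaine--Laffaille equivalence itself, with a detailed account of the inverse functor deferred to the appendix (cf.\ \cite{bhatt-snowden}).
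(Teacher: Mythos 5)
Your strategy is exactly the paper's: the proposition is deduced from the classical Fontaine--Laffaille equivalence in weights $[0,p-2]$ by Tate twisting and tensor compatibility, with the twisted functor written out in the appendix, where the normalization is fixed by $\FL_{[a,b]}(M)=\Hom_{\Fil,\varphi}(M\{a\},A_{\cris}(a))^{\vee}$ and $\FL_{[a,b]}(W\{k\})=\ZZ_p(k)$, i.e.\ $\DDD_{[a,b]}(\ZZ_p(k))=W\{k\}$. So there is no genuinely different route here; the reduction, the verification of $a$-strong divisibility, and the twist bookkeeping for tensors and duals are all as in the paper.

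One concrete correction is needed, and it sits precisely in the bookkeeping you yourself flag as the only real content: your inner twist goes the wrong way relative to the paper's normalization. Since $W\{-1\}$ has its filtration jump in degree $1$ with $\varphi=p$, and $\DDD(\ZZ_p(k))=W\{k\}$ is forced by the appendix, the lattice $\ZZ_p(-a)$ has Hodge--Tate weight $+a$ in this convention, so $T\otimes\ZZ_p(-a)$ has weights in $[2a,\,a+b]$, not $[0,\,b-a]$; in the case actually used in the paper, $[a,b]=[-1,1]$, you would be handing the classical functor a lattice with weights in $[-2,0]$, where it is not defined. The correct formula is $\DDD_{[a,b]}(T)=\DDD_{\FL}(T\otimes\ZZ_p(a))\otimes_W W\{-a\}$, and correspondingly your normalization check should read $\DD_W(\QQ_p(-a))=W\{-a\}[1/p]$ rather than $W\{a\}$ (as written it contradicts $\FL_{[a,b]}(W\{k\})=\ZZ_p(k)$, and consistency with the Fontaine--Messing comparison used later, where $H^k_{\et}$ has weights in $[0,k]$, also pins down this sign). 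With that sign fixed, the remainder of your argument --- the equivalence because the two twists are mutually inverse equivalences, and the tensor/duality compatibilities by the same bookkeeping --- goes through and coincides with the paper's proof.
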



As in the last section, we also have the integral comparison theorem of Fontaine-Messing \cite{FontaineMessing}, which relates \'etale and de Rham cohomology.
\begin{prop}
Given a smooth proper scheme $g: Y \rightarrow \Spec W$,
there is a natural isomorphism of filtered Frobenius crystals over $W$
$$\Phi^{\mathrm{int}}_g: H^k_{\DR}(Y/W) \rightarrow \DDD_{[0,k]}(H^k_{\et}(Y[1/p],\ZZ_p))$$
for $p > k+1$.
It is compatible with $\Phi_g$ in the sense that
$$\Phi^{\mathrm{int}}_g \otimes K = \Phi_g.$$
\end{prop}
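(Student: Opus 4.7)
The statement is essentially the integral crystalline comparison theorem, due originally to Fontaine--Messing and extended/clarified by Fontaine--Laffaille, Kato, Tsuji, and others. Since the paper states this as a cited result, my plan is to outline how one obtains it from the integral theory combined with the Fontaine--Laffaille equivalence stated in the preceding proposition.

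The first step is to show that $H^k_{\DR}(Y/W) = H^k_{\cris}(Y/W)$ is naturally an object of the Fontaine--Laffaille category, i.e. a (0-)strongly divisible lattice with weights in $[0,k]$. The Hodge filtration provides the filtration; the Frobenius structure comes from crystalline cohomology. One must verify three things: (a) $H^k_{\DR}(Y/W)$ and its graded pieces are $p$-torsion free; (b) the divisibility $\phi(\Fil^i H^k_{\DR}(Y/W)) \subset p^i H^k_{\DR}(Y/W)$; (c) the spanning condition $\sum_{i} p^{-i} \phi(\Fil^i H^k_{\DR}(Y/W)) = H^k_{\DR}(Y/W)$. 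Under the hypothesis $p > k+1$, statement (a) is where the weight bound is essential, avoiding pathological torsion; (b) follows from standard properties of Frobenius divisibility on crystalline cohomology; (c) can be deduced from (b) together with degeneration of the Hodge--de Rham spectral sequence and a rank argument, since after inverting $p$ the relation is known by the rational comparison theorem.

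The second step is to apply the Fontaine--Laffaille functor: the preceding proposition gives an equivalence $\DDD_{[0,k]}$ between Galois-stable lattices in crystalline representations with Hodge--Tate weights in $[0,k]$ and strongly divisible lattices with weights in the same range. Applying the inverse of this equivalence to $H^k_{\DR}(Y/W)$ produces a $\pi_1(\Spec K)$-stable $\ZZ_p$-lattice inside some crystalline representation. The comparison isomorphism $\Phi_g$ already identifies this isocrystal, after inverting $p$, with the crystalline representation $H^k_{\et}(Y_{\overline{K}}, \QQ_p)$; the point is to match the integral lattices. This matching is the substance of the Fontaine--Messing construction via syntomic cohomology, which produces a natural map between the two integral structures and shows it is an isomorphism when $p > k+1$.

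The compatibility $\Phi^{\mathrm{int}}_g \otimes K = \Phi_g$ is then automatic, because both the Fontaine--Laffaille equivalence and the syntomic construction are, by construction, compatible with inverting $p$ and recover the rational functor $\DD_W$ of the previous subsection. The main obstacle throughout is the torsion control in step one: once $p > k+1$ the integral theory is clean, but near the boundary case $p = k+1$ or below, both torsion in integral crystalline cohomology and failure of Fontaine--Laffaille equivalence can occur, which is why the hypothesis is sharp in this formulation.
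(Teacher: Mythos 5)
The paper does not actually prove this proposition: the integral statement is quoted from Fontaine--Messing \cite{FontaineMessing}, and the compatibility with $\Phi_g$ is quoted from Niziol \cite{niziol}. Your sketch of the Fontaine--Messing side has roughly the right shape, but two of its load-bearing claims are wrong or unjustified. First, the hypothesis $p>k+1$ does not ``avoid pathological torsion'': no bound on $p$ in terms of $k$ forces $H^k_{\DR}(Y/W)$ or its Hodge graded pieces to be $p$-torsion free. Torsion-freeness is a separate, here implicit, hypothesis --- needed even for the phrase ``filtered Frobenius crystal'' to apply --- and it holds in the paper's applications because the fibers are K3 surfaces and abelian varieties. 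The actual role of $p>k+1$ is to put the weights $[0,k]$ inside the Fontaine--Laffaille range $[0,p-2]$, which is what makes the Fontaine--Laffaille equivalence and the Fontaine--Messing syntomic comparison work. Relatedly, your steps (b) and (c) are not ``standard divisibility plus a rank argument'': strong divisibility of the Hodge filtration under Frobenius is Mazur's theorem on Frobenius and the Hodge filtration, which itself needs torsion-freeness and Hodge--de Rham degeneration; it is a genuine input, not a formal consequence of the rational comparison.

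Second, and more seriously, the compatibility $\Phi^{\mathrm{int}}_g\otimes K=\Phi_g$ is not ``automatic by construction''. In this paper $\Phi_g$ is Faltings's (Andreatta--Iovita's) comparison isomorphism, built from relative Fontaine theory and almost \'etale extensions, whereas the integral isomorphism you outline comes from the syntomic construction of Fontaine--Messing. These are two a priori different period morphisms between the same objects, and the assertion that they agree after inverting $p$ is a nontrivial theorem --- precisely Niziol's result on uniqueness of $p$-adic period morphisms, which is the reference the paper supplies at this exact point. Without that input your argument identifies $H^k_{\DR}(Y/W)$ with some Galois-stable lattice inside the crystalline representation, but not one matched to $H^k_{\et}(Y[1/p],\ZZ_p)$ by the specific rational isomorphism $\Phi_g$ used throughout Section 6, which is what the proposition asserts and what the subsequent integral arguments require.
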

The compatibility between Fontaine-Messing and Faltings's comparison isomorphisms is proven in \cite{niziol}.

We will only be interested in the range of weights $[-1,1]$ and will suppress that notation from now on: $ \DDD = \DDD_{[-1,1]}$.

We need integral versions of the compatibility statements from Lemmas \ref{abelian-iso-comp} and \ref{clifford-iso-comp} from last section.  
In what follows, suppose we have an abelian scheme $\pi: A_W \rightarrow \Spec W$
equipped with a fiber-wise action of the algebra $C^+$.

\begin{lem}\label{abelian-comp}
Assume $p \geq 5$.
We have a natural isomorphism 
$$\Phi^{\mathrm{int}}_\pi: \End_{C^+}(R^1\pi_*(\Omega^*_{A_{W}/W})) \rightarrow
\DDD(\End_{C^+}(\Ret^1\pi_*(\ZZ_p))),$$
compatible with the rational isomorphism proven in the last section.
\end{lem}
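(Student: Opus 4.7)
The plan is to upgrade the isocrystal-level argument of Lemma \ref{abelian-iso-comp} to the integral level, replacing Faltings's functor $\DD_{\Bhat}$ with the Fontaine--Laffaille functor $\DDD = \DDD_{[-1,1]}$. The hypothesis $p \geq 5$ is exactly what is needed so that this functor is defined on the relevant range of weights, since $p > 1-(-1)+1 = 3$.

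First, apply the Fontaine--Messing integral comparison theorem to $\pi: A_W \to \Spec W$ to obtain an isomorphism
$$\Phi_\pi^{1}: R^1\pi_*(\Omega^*_{A_W/W}) \simrightarrow \DDD(\Ret^1\pi_*(\ZZ_p))$$
of filtered Frobenius crystals with Hodge--Tate weights in $[0,1]$. By functoriality of the Fontaine--Messing isomorphism under morphisms of smooth proper $W$-schemes, applied to each $W$-endomorphism of $A_W$ induced by an element of $C^+$, the isomorphism $\Phi_\pi^{1}$ intertwines the natural $C^+$-actions on both sides.

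Next, invoke the compatibility of $\DDD$ with tensor products and duals: the filtered Frobenius crystal $R^1\pi_*\Omega^*$ has weights in $[0,1]$, its dual in $[-1,0]$, and the tensor product has weights in $[-1,1]$, still within the Fontaine--Laffaille range. Thus $\Phi_\pi^1$ induces an isomorphism of filtered Frobenius crystals
$$\End(R^1\pi_*\Omega^*) \simrightarrow \DDD(\End(\Ret^1\pi_*\ZZ_p)),$$
equivariant for the conjugation actions of $C^+$ on both sides.

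Finally, pass to $C^+$-invariants. This is the step of genuine content. Since $p \nmid 2d$, the pairing on $L_{2d}\otimes W$ is nondegenerate, so by Knus (IV.3) the algebra $C^+ \otimes_{\ZZ} W \cong \mathrm{Mat}_{2^{19}}(W)$ is a matrix algebra. As in the Morita argument of Lemma \ref{morita}, the $C^+$-invariant subsheaf $\End_{C^+}(-)$ of $\End(-)$ is a canonical direct summand, cut out by an idempotent in the image of the action, on either side. Since $\DDD$ is an equivalence of categories compatible with all the relevant structures, and the idempotent in question is $C^+$-linear (hence automatically a morphism of filtered Frobenius crystals), restricting the displayed isomorphism to this summand yields the desired
$$\Phi_\pi^{\mathrm{int}}: \End_{C^+}(R^1\pi_*\Omega^*_{A_W/W}) \simrightarrow \DDD(\End_{C^+}(\Ret^1\pi_*\ZZ_p)).$$
Compatibility with the rational isomorphism of Lemma \ref{abelian-iso-comp} is automatic: after inverting $p$, the Fontaine--Laffaille functor recovers Faltings's functor by the compatibility statement immediately preceding the lemma, and the Morita decomposition here rationalizes to the one used there.

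The main obstacle is the integral Morita step: one must verify that the idempotent projecting onto $\End_{C^+}$ respects the filtration, connection, and Frobenius at the level of $W$-lattices, not merely $K$-vector spaces. This follows because the $C^+$-action itself acts through endomorphisms of filtered Frobenius crystals, so any $C^+$-linear projector is automatically such an endomorphism, and the splitting descends intact through $\DDD$.
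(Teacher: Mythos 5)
Your proof is correct and follows essentially the same route as the paper: Fontaine--Messing for $R^1\pi_*$, tensor/dual compatibility of $\DDD$ in the weight range $[-1,1]$ (where $p\geq 5$ suffices), and then passage to $C^+$-centralizers using the rational statement of Lemma \ref{abelian-iso-comp}. The paper leaves the centralizer step implicit (citing the rational lemma and the Morita argument of Lemma \ref{morita}), whereas you spell out the integral idempotent argument explicitly, but this is the same argument in more detail.
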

\begin{proof}
It follows from the bound on the prime that
we have an isomorphism
$$\End(R^1\pi_*(\Omega^*_{A_{W}/W})) \simrightarrow
\DDD(\End(\Ret^1\pi_*(\ZZ_p))),$$
compatible with the rational isomorphism.
In combination with the rational statement in Lemma \ref{abelian-iso-comp}, we have the result.
\end{proof}

\begin{lem}\label{clifford-comp}
Assume $p \geq 5$.
Given a Galois-stable lattice $M$ of a crystalline representation $V$ over $\Spec W$ with symmetric pairing
$$\psi_{\et}:  M \otimes_{\ZZ_p} M \rightarrow \ZZ_p,$$
compatible with the Galois action, such that 
the Hodge-Tate weights of $V$ and $\Cl_+(V,\psi_{\et})$ are contained in $[-1,1]$.
Let $\calE = \DDD(M)$ be its associated strongly divisible module and let
$$\psi_{\cris} =\DDD(\psi_{\et}): \calE \otimes_{W} \calE \rightarrow W $$
be the associated pairing on $\calE$.
Then there is a natural isomorphism of strongly divisible modules
$$\Cl_{+}(\calE, \psi_{\cris}) \simrightarrow \DDD(\Cl_{+}(\LL,\psi_{\et}))$$
compatible with the rational isomorphism in the previous section.
\end{lem}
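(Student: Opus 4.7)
The plan is to lift the rational isomorphism of Lemma \ref{clifford-iso-comp} (applied here to the absolute base $\Spec W$) to an isomorphism of strongly divisible modules using the Fontaine-Laffaille equivalence $\DDD$. The key ingredient is a compatibility between the inverse functor $\uT$ and the Clifford-algebra construction, which is precisely the content of Snowden's appendix.

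First, I would verify that $\Cl_{+}(\calE, \psi_{\cris})$ naturally carries the structure of a $(-1)$-strongly divisible module with weights in $[-1, 1]$. Its underlying $W$-module is free; its filtration, described in Example \ref{clifford-dR}, has nonzero graded pieces only in degrees $-1, 0, 1$; and its Frobenius is induced from the Frobenius on $\calE$ by extending multiplicatively, which is well-defined because $\psi_{\cris} = \DDD(\psi_{\et})$ is Frobenius-compatible after the appropriate Tate twist. The strong-divisibility identities can then be checked on the explicit filtered basis of $\Cl_{+}$ given in Example \ref{clifford-dR}, reducing to the corresponding identities for $\calE$ itself.

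Next, I would apply the inverse functor $\uT$ to the strongly divisible module $\Cl_{+}(\calE, \psi_{\cris})$. By Snowden's appendix, there is a natural Galois-equivariant identification
\[ \uT(\Cl_{+}(\calE, \psi_{\cris})) \;=\; \Cl_{+}(\uT(\calE), \uT(\psi_{\cris})) \;=\; \Cl_{+}(M, \psi_{\et}) \]
of lattices inside $\Cl_{+}(M, \psi_{\et}) \otimes \QQ_p$. Passing back through the equivalence $\DDD$ produces the desired isomorphism of strongly divisible modules $\Cl_{+}(\calE, \psi_{\cris}) \simrightarrow \DDD(\Cl_{+}(M, \psi_{\et}))$, and its compatibility with the rational isomorphism of Lemma \ref{clifford-iso-comp} is automatic, since after inverting $p$ both maps are determined by the definitional identifications $\calE = \DDD(M)$ and $\psi_{\cris} = \DDD(\psi_{\et})$.

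The main obstacle, and the reason the appendix must be invoked, is that $\uT$ is not formally compatible with arbitrary tensor constructions at the integral level. The Clifford algebra is a quotient of the even tensor algebra by relations involving the quadratic form, and individual tensor powers $M^{\otimes 2k}$ typically have Hodge-Tate weights outside $[-1, 1]$, placing them beyond the range on which $\DDD$ and $\uT$ are defined. Snowden's appendix handles this by giving an explicit description of $\uT$ applied directly to the Clifford algebra, working with the quotient rather than its presentation, so that the weight constraint is never violated.
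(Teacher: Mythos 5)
Your proposal is correct and follows essentially the same route as the paper: the paper's own proof of this lemma consists precisely of deferring the integral Clifford compatibility to Snowden's appendix (Proposition~\ref{app1}), which is exactly what you invoke. Two small remarks: the functor $\uT$ in the appendix is Kisin's functor on Kisin modules over $W[[u]]$, not the inverse Fontaine--Laffaille functor, so the identification you quote is really the statement of Proposition~\ref{app1} itself (proved there by passing through Kisin/Breuil modules, where no weight restriction intervenes, and then reducing modulo $u$); and your preliminary direct verification that $\Cl_{+}(\calE,\psi_{\cris})$ is strongly divisible is harmless but redundant, since Proposition~\ref{app1} delivers this as part of its conclusion, while the compatibility with the rational isomorphism that you call automatic is exactly the final point the appendix flags as requiring a (sketched) verification.
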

The rational proof of this lemma applies here integrally as written, but requires a much worse bound on $p$ in order for tensor product compatibility to apply.
To realize the stronger bound on the prime, we require some extra technology, and defer the proof to the Appendix.

We can now complete the proof of Proposition \ref{clifford-crystal}.
\begin{proof}
From Lemma \ref{clifford-isocrystal}, we have an isomorphism of filtered F-isocrystals
$$\Psi_{\Bhat[1/p]}: \Cl_{+}( P^2_{\DR}(f)\{1\})[1/p] \simrightarrow \End_{C^{+}}(R^1\pi_*\Omega^*_{\calAhat/\Bhat})[1/p] .$$
From Lemma \ref{brianlemma}, it suffices to show that
for every $\iota: \Spec W \rightarrow \Bhat$
$$\iota^*\Psi_{\Bhat[1/p]} = \Psi_\iota \otimes K$$
for some isomorphism of filtered F-crystals.
Let
$$f_{\iota}: X_W \rightarrow \Spec W, \qquad \pi_{\iota}: A_W \rightarrow \Spec W$$
denote the families over $W$ obtained via $\iota$.
We can construct $\Psi_\iota$ as before using
the sequence of isomorphisms
\begin{align*}
 \Cl_{+}( P^2_{\DR}(f_{\iota})\{1\}) \simrightarrow \Cl_{+}(\DDD(P^2_{\et}f_{\iota,*}(\ZZ_p(1)))) &\simrightarrow
 \DDD(\Cl_{+}(P^2_{\et}f_{\iota,*}(\ZZ_p(1))))\\ &\simrightarrow \DDD(\End_{C^+}(\Ret^1\pi_{\iota,*}(\ZZ_p)))
\simrightarrow \End_{C^+}(R^1\pi_*(\Omega^*_{A_{W}/W}))
\end{align*}
Compatibility with $\Psi_{\Bhat[1/p]}$ then follows from the other compatibilities listed in the above lemmas.
\end{proof}

\subsection{Proof of quasifiniteness}

We can now finish the proof of Proposition \ref{quasifinite} and thus positivity of the Hodge bundle.

Using Proposition \ref{clifford-crystal}, if we restrict filtered Frobenius crystals to $B_{\kk}$, we have an isomorphism of
filtered vector bundles with integrable connection
\begin{equation}\label{derhammodp}
\Cl_{+}(P^2_{\DR}(f_{\kk})\{1\}) = \Cl_{+}(P^2_{\DR}(f)\{1\}) \otimes \kk  \simrightarrow \End_{C^{+}}(R^1\pi_{*}\Omega^*_{\calAhat/\Bhat}) \otimes \kk = \End_{C^{+}}(R^1\pi_{*}\Omega^*_{\calA_{\kk}/B_{\kk}}).
\end{equation}

Here we have used Lemma \ref{morita} on the right-hand side.

We now state the following simple lemma:

\begin{lem}
Let $E$ and $E'$ be vector bundles on $B_{\kk}$, equipped with decreasing filtrations $\Fil$ and $\Fil'$, integrable connections $\Del$ and $\Del'$, and an inclusion 
$$0 \rightarrow E' \rightarrow E$$
compatible with connections such that $\Fil^{k,'} = \Fil^k \cap E'$.
Suppose that the restriction of $\Del$ preserves a given filtered piece
$$\Del: \Fil^{k} \rightarrow \Fil^{k} \otimes \Omega^1_{B_{\kk}}$$
then the same holds for $\Del'$ and $\Fil^{k,'}$.
\end{lem}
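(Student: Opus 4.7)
The plan is a direct diagram chase, so the proof should be very short. Let $s$ be a local section of $\Fil^{k,\prime} = \Fil^k \cap E'$. Viewing $s$ as a section of $\Fil^k \subset E$, the hypothesis on $\Del$ gives $\Del(s) \in \Fil^k \otimes \Omega^1_{B_{\kk}}$. On the other hand, compatibility of the two connections with the inclusion $E' \hookrightarrow E$ means exactly that $\Del(s) = \Del'(s)$ inside $E \otimes \Omega^1_{B_{\kk}}$, so in particular $\Del'(s)$, a priori a section of $E' \otimes \Omega^1_{B_{\kk}}$, is equal to a section of $\Fil^k \otimes \Omega^1_{B_{\kk}}$.

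The only nontrivial point is combining these two containments. Since $\Omega^1_{B_{\kk}}$ is a locally free $\calO_{B_{\kk}}$-module (hence flat), tensoring with it commutes with taking intersections of subsheaves, so
\[
(\Fil^k \cap E') \otimes \Omega^1_{B_{\kk}} \;=\; (\Fil^k \otimes \Omega^1_{B_{\kk}}) \cap (E' \otimes \Omega^1_{B_{\kk}}),
\]
and therefore $\Del'(s) \in \Fil^{k,\prime} \otimes \Omega^1_{B_{\kk}}$. Since this holds for every local section $s$ of $\Fil^{k,\prime}$, we conclude that $\Del'$ preserves $\Fil^{k,\prime}$. There is no real obstacle; the only subtlety worth flagging is the use of flatness of $\Omega^1_{B_{\kk}}$, without which one could not identify the intersection with the tensor product.
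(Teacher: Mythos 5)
Your proof is correct: the paper states this lemma without proof, regarding it as routine, and your diagram chase is exactly the intended argument. The one point you rightly flag---that tensoring with the locally free (hence flat) sheaf $\Omega^1_{B_{\kk}}$ commutes with intersections of subsheaves, so that $(\Fil^k \cap E')\otimes \Omega^1_{B_{\kk}} = (\Fil^k \otimes \Omega^1_{B_{\kk}}) \cap (E'\otimes \Omega^1_{B_{\kk}})$ inside $E \otimes \Omega^1_{B_{\kk}}$---is the only content, and it holds as stated.
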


Since $\pi_{\kk}:\calA_{\kk} \rightarrow B_{\kk}$ is trivial, the Gauss--Manin connection on
$$\End(R^1\pi_{*}\Omega^*_{\calA_{\kk}/B_{\kk}})$$
is trivial and in particular preserves every step of the Hodge filtration.
Therefore, using the above lemma and equation \eqref{derhammodp}, it follows that
the connection $\Del$ on 
$$\Cl_{+}(P^2_{\DR}(f_{\kk})\{1\})=\End_{C^{+}}(R^1\pi_{*}\Omega^*_{\calA_{\kk}/B_{\kk}})$$
preserves each step of the filtration.  In particular, $\Del$ preserves the filtered piece $\Fil^1(\Cl_{+}(P^2_{\DR}(f_{\kk})\{1\}))$.

On the other hand, suppose the Kodaira-Spencer map
$\mathsf{gr}^{2}\Del$ for $f_{\kk}$ 
is nonzero at some closed point $b \in B_{\kk}$.
We can pass to primitive cohomology and rephrase this as saying that the connection 
$\Del$ does not preserve the filtered piece $\Fil^1(P^2_{\DR}(f_{\kk})\{1\})$.

Replace $B_{\kk}$ by an affine neighborhood such that $P^2_{\DR}(f_{\kk})\{1\})$ and all filtered pieces
are given by free modules and such that there exists an everywhere-nonzero vector field $v$.

Then if we choose a basis vector $\omega$ for the rank-one module $\Fil^1(P^2_{\DR}(f_{\kk})\{1\})$,
we have that
$$\eta := \Del_{v}\omega \notin \Fil^1(P^2_{\DR}(f_{\kk})\{1\}).$$
By Griffiths transversality, we have that $\eta \in \Fil^0(P^2_{\DR}(f_{\kk})\{1\})$; after possibly passing to a smaller neighborhood, we can 
complete $\{\omega, \eta\}$ to a filtered basis of $P^2_{\DR}(f_{\kk})\{1\})$:
$$\omega, \eta_1 = \eta, \eta_2, \dots, \eta_{19}, \gamma$$
with $\eta_k \in \Fil^0(P^2_{\DR}(f_{\kk})\{1\})$.

Since $\omega, \eta,$ and $\eta_2$ are linearly independent, we know
\begin{equation}\label{notdivisible}
\eta\cdot \eta_2 \notin \omega\cdot \Cl(P^2_{\DR}(f_{\kk})\{1\}).
\end{equation}
If we apply $\Del_v$ to $\omega\cdot \eta_2$, we see that 
$$\eta\cdot\eta_2 =  \Del_v(\omega\cdot\eta_2) - \omega \cdot \Del_{v}\eta_2.$$
Using the description of the Clifford filtration in Example \ref{clifford-dR}, we have that
$$\omega\cdot\eta_2 \in \Fil^1(\Cl_{+}(P^2_{\DR}(f_{\kk})\{1\}))$$
and therefore also
$$\Del_v(\omega\cdot\eta_2) \in \Fil^1(\Cl_{+}(P^2_{\DR}(f_{\kk})\{1\})) \subset  \omega\cdot \Cl(P^2_{\DR}(f_{\kk})\{1\}).$$
This implies that $\eta\cdot\eta_2$ is divisible by $\omega$, so we have a contradiction with equation \eqref{notdivisible}.

\section{Proof of Main theorem}

Let $k$ be an algebraically closed field of characteristic $p$, and $X/k$ a supersingular K3 surface 
with a polarization $L$ of degree $2d$ with $p> 2d+4$.  We can now prove Artin's conjecture for $X$, along the lines sketched in the introduction.

We fix $n=4$ and work with the moduli space $\M_{2d,n}$ with spin level structure.  As always, let $W = W(k)$ denote the ring of Witt vectors with fraction field
$K= W[1/p]$.

We first construct a proper one-dimensional family of supersingular K3 surfaces containing $X$.
By Theorem $15$ of \cite{ogus}, the supersingular locus of $\M^{\circ}_{2d,n,k}$ is a closed algebraic subspace of dimension $9$.  In particular,
there exists a nontrivial map
$$\iota: C^{\circ} \rightarrow \M^{\circ}_{2d,n,k}$$
from an affine open subset $C^{\circ}$ of a smooth proper curve $C/k$, whose image is contained in the supersingular locus.
Let
$$f: \calX^{\circ} \rightarrow C^{\circ}$$
be the associated polarized family of K3 surfaces which we can assume, after taking an \'etale cover, carries a relatively ample line bundle $L$.

By Theorem $5.2$ of \cite{saint-donat}, given a K3 surface over an algebraically closed field of odd characteristic, with an ample line bundle $L$ that is not 
very ample, either the surface has a polarization of degree $2$ or the surface is elliptic.
In these cases, Artin's conjecture holds by either \cite{rsz} or \cite{artin-ss}.
Therefore, we can assume that $L$ is very ample on the generic fiber of $f$ and we can apply Theorem \ref{ss-degeneration}.  
The following lemma uses the local result to compactify the family $f$.

\begin{lem}
After possibly replacing $C^\circ$ by a finite cover, we can compactify $f$ to a quasipolarized family of supersingular K3 surfaces
with spin level $n$ structure
$$f: \calX \rightarrow C$$
over a smooth, proper, connected curve $C$.
\end{lem}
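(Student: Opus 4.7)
The plan is to extend $f^\circ$ across each missing point of $C$ by applying Theorem \ref{ss-degeneration} locally, glue the local extensions after a single finite base change of $C$, and observe that the spin level $n$ structure extends for free because monodromy around the newly-compactified points is trivial.

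For each $c \in C \setminus C^\circ$, let $R_c = \calO_{C,c}$ and $K_c = \mathrm{Frac}(R_c)$. Restricting $f^\circ$ to the generic point of $\Spec R_c$ gives a supersingular K3 surface $X_c/K_c$ with the induced polarization of degree $2d$, which we may assume very ample by the reductions recorded immediately before the lemma. Since $p > 2d+4$, Theorem \ref{ss-degeneration} provides a finite separable extension $K_c'/K_c$ and a smooth proper family of K3 surfaces over the integral closure $R_c'$ of $R_c$ in $K_c'$, carrying a quasipolarization that extends the polarization on $X_c \otimes_{K_c} K_c'$.

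Choose a single finite separable extension $L$ of the function field of $C$ containing every $K_c'$, and let $C_1 \to C$ be (a connected component, dominating $C^\circ$, of) the normalization of $C$ in $L$; write $C_1^\circ$ for the preimage of $C^\circ$. Over $C_1^\circ$ we have $f^\circ \times_C C_1^\circ$, and at each point of $C_1 \setminus C_1^\circ$ we have the pullback to the local ring of one of the local extensions above (the required further base change is already absorbed into $L$). Both agree with the same K3 surface over the local generic points of $C_1$, so fpqc gluing on the Deligne--Mumford stack $\M_{2d}$ combines them into a smooth proper family $\calX \to C_1$ equipped with a quasipolarization $\xi$ of degree $2d$ restricting to the polarization of $f^\circ$ on $C_1^\circ$. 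All fibers are supersingular by Corollary $1.3$ of \cite{artin-ss}.

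It remains to upgrade the spin level $n$ structure from $C_1^\circ$ to $C_1$. Because $\calX \to C_1$ is smooth, the inertia subgroups of $\pi_1^{\et}(C_1^\circ)$ at the points of $C_1 \setminus C_1^\circ$ act trivially on the primitive \'etale cohomology of the fibers, so the $\pi_1^{\et}(C_1^\circ)$-action on the isometry set defining spin level structures factors through the quotient $\pi_1^{\et}(C_1)$; any $\pi_1^{\et}(C_1^\circ)$-fixed class is automatically $\pi_1^{\et}(C_1)$-fixed. This endows $(\calX,\xi)/C_1$ with the desired spin level $n$ structure, and setting $C = C_1$ completes the construction. The real content is Theorem \ref{ss-degeneration}; what remains is routine fpqc gluing and the elementary observation about smooth monodromy.
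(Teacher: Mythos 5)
Your construction is essentially the paper's: apply Theorem \ref{ss-degeneration} at each boundary point of $C^\circ$, pass to a single finite cover of $C$ whose function field contains all of the local extensions $K_c'$, and patch the resulting smooth local models with the pullback of $f$ over the preimage of $C^\circ$. The only real divergence is in how the two halves are finished. For the patching, the paper extends the classifying map to $\M_{2d,k}$ over each $\Spec \calO_{C',t}$ via the valuative criterion and then glues, while you invoke fpqc descent for the Deligne--Mumford stack $\M_{2d}$ along the cover by $C_1^\circ$ and the local rings at the boundary points; both are fine, and both treat with the same brevity the point that the valuation induced on $K_c'$ by a point $t$ over $c$ must be the one over which Theorem \ref{ss-degeneration} produced the model (harmless, e.g.\ after enlarging $L$ to a Galois closure, since the generic fiber is pulled back from $k(C)$). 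For the spin level $n$ structure, the paper simply passes to a further finite \'etale cover of the compactified curve, whereas you argue that no further cover is needed: since the family and the quasipolarization now extend smoothly and properly over all of $C_1$, and every prime of $T$ (the primes dividing $2dn=8d$) is invertible in $k$ because $p>2d+4$, the lisse sheaf $P^2_{\et}(\ZZ_T(1))$ on $C_1^\circ$ is the restriction of a lisse sheaf on $C_1$, so the monodromy action factors through the surjection $\pi_1^{\et}(C_1^\circ) \rightarrow \pi_1^{\et}(C_1)$ and the invariant class pulled back from $C^\circ$ is already a level structure for the extended family. That argument is correct and in fact slightly sharper than the paper's; since the lemma permits a further finite cover anyway, nothing changes downstream.
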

\begin{proof}

Given each closed point $c$ in the (finite) complement $C\backslash C^\circ$, let 
$$\Delta_c = \Spec \calO_{C,c}.$$
By Theorem \ref{ss-degeneration}, there exists a finite, separable base change
$$\Delta'_{c} \rightarrow \Delta_c$$
and a map $\Delta'_{c} \rightarrow \M_{2d,k}$ 
extending the restriction of $\iota$ to the generic point of $\Delta_c$.

For each $c$, the generic point of $\Delta'_{c}$ defines a finite separable field extension of the function field $k(C)$.
Choose a finite cover $C' \rightarrow C$
such that the field extension
$k(C) \rightarrow k(C')$
of function fields is a finite separable extension containing all these finite extensions of $k(C)$.
Given a point $t \in C'$ lying over a boundary point $b$, let $\Delta_t = \Spec \calO_{C',t}$ and let $\Delta^{\circ}_t$ be the
complement of the closed point.  By construction, 
 $$\Delta^{\circ}_t = \Spec k(C') \subset \Delta_t \rightarrow \Delta_{c}$$
 lifts to a map
 $$\Delta^{\circ}_t = \Spec k(C') \rightarrow \Delta'_{c}.$$
By properness, this extends to
$$\Delta_t \rightarrow \Delta'_c \rightarrow \M_{2d,k}.$$
Therefore, we can compactify the pullback of $\iota$ on the preimage of $C^\circ$ to a map $C' \rightarrow \M_{2d,k}.$
After taking a finite \'etale cover of $C'$, we can construct the spin level $n$ structure as well.
\end{proof}

We now apply the results of Section \ref{sectionpicardjumping}.  If we take Theorem \ref{Picardjumping} applied to the rank $2$ lattices associated to elliptic surfaces (as in the proof of Corollary \ref{elliptic-case}), and pullback the linear equivalence constructed there
to $\M_{2d,n,\CC}$, we have
a divisor $D_{\CC}$ consisting of elliptic K3 surfaces
such that
\begin{equation}\label{hodgeelliptic}
\lambda^{\otimes a} = \calO(D_{\CC}) \in \Pic(\M_{2d,n,\CC})
\end{equation}
where $\lambda$ is the Hodge bundle on $\M_{2d,n}$ and $a > 0$.

Given a rank $2$ lattice $\Lambda$ of the type considered in Section \ref{sectionpicardjumping}, the divisors $D_{\Lambda}$ are defined over $\overline{\QQ}$, since they can be described as the images of moduli spaces of quasipolarized K3 surfaces equipped with an extra line bundle.  Furthermore, the Galois conjugate of an elliptic K3 surface over $\overline{\QQ}$ is still elliptic.
Therefore, arguing as the first paragraph of Proposition \ref{hodgecompare}, equation \eqref{hodgeelliptic} descends to $\QQ$ perhaps after passing to a multiple and replacing $D$ with a union of conjugates.  That is, there exists a Cartier divisor $$D_{\QQ} \subset \M_{2d,n,\QQ},$$ whose geometric points correspond to elliptic K3 surfaces, such that 
$$\lambda^{\otimes a} = \calO(D_{\QQ}) \in \Pic(\M_{2d,n,\QQ}).$$

We can base change to $K$, and let $D\subset \M_{2d,n,W}$ be the Cartier divisor obtained by taking the closure of $D\otimes K \subset \M_{2d,n,K}$, with multiplicities.
Applying Lemma \ref{specialize-line}, the equality of line bundles on $\M_{2d,n,K}$ specializes to an isomorphism over $\M_{2d,n,k}$, i.e. we have an isomorphism of line bundles on $\M_{2d,n,k}$
$$\lambda^{\otimes a} = \calO(D\otimes k) \in \Pic(\M_{2d,n,k}).$$

By Theorem \ref{amplehodge}, 
$$\deg_{C} \lambda^{\otimes a} > 0,$$
so we must have
$$C \cap (D\otimes k) \ne \emptyset.$$
In particular there exists at least one closed fiber $\calX_t$ of $f$ which is a supersingular elliptic K3 surface.
By Theorem $1.7$ of \cite{artin-ss}, $\calX_t$ has Picard rank $22$.  Since the Picard rank of a supersingular K3 surface is constant in connected families, by Theorem $1.1$ of \cite{artin-ss},
$X$ has Picard rank $22$ as well.

Finally, notice that the only place where $p > 2d+4$ is used is Section \ref{saito-section} to construct a semistable model via Saito's work.  If we assume semistable reduction for
surfaces over a one-dimensional base, then the rest of the paper only requires $p \nodivide 2d$.  Therefore, under this assumption, we have an improved bound on the prime.

\appendix

\section{Compatibility of Clifford constructions with Fontaine--Laffaille functor (by A. Snowden)}

Let $p>2$ be a prime, let $K/\QQ_p$ be an unramified extension with absolute Galois group $G$ and ring of integers $W$.
Let $D$ be a filtered $\varphi$-module with Hodge-Tate weights in $[a, b]$ with $b-a<p$ (these are called filtered F-isocrystals in Section \ref{section-setup}).  
By an \emph{$a$-strongly divisible
lattice} in $D$, we mean a $W$-lattice $M$ in $D$ such that $M\{a\}$ is a strongly divisible lattice in $D\{a\}$ in
the usual sense.  We let $\FL_{[a,b]}$ be the Fontaine--Laffaille functor, which takes $a$-strongly divisible lattices
in $D$ bijectively to $G$-stable lattices in the corresponding Galois representation.  It is defined by
\begin{displaymath}
\FL_{[a,b]}(M)=\Hom_{\Fil,\varphi}(M\{a\}, A_{\cris}(a))^{\vee}.
\end{displaymath}
With this convention, we have $\FL_{[a, b]}(W\{k\})=\ZZ_p(k)$ for any $k \in [a, b]$; the inverse functor is $\DDD_{[a,b]}$ in Section \ref{section-integral}.   We follow here the Tate twist conventions from that section.

The goal of this appendix is
to prove the following result.

\begin{prop}
\label{app1}
Let $a<b$ be integers with $b-a<p-1$.
\begin{itemize}
\item Let $V$ be a crystalline representation with weights in $[a, b]$.
\item Let $\psi_V:V \otimes V \to \QQ_p$ be a symmetric Galois compatible pairing.
\item Let $D$ be the filtered $\varphi$-module corresponding to $V$.
\item Let $\psi_D:D \otimes D \to K$ be the pairing corresponding to $\psi_V$.
\item Let $T \subset V$ be a $G$-stable lattice such that the restriction $\psi_T$ of $\psi_V$ to $T$ is $\ZZ_p$-valued.
\item Let $M$ be the $a$-strongly divisible lattice in $D$ corresponding to $T$ under $\FL_{[a, b]}$.
\end{itemize}
Assume that the Clifford algebra $\Cl(V, \psi_V)$ has weights in $[a, b]$.  Then the restriction $\psi_M$ of $\psi_D$
to $M$ is $W$-valued and the Clifford algebra $\Cl(M, \psi_M)$ is the $a$-strongly divisible lattice in
$\Cl(D, \psi_D)$ corresponding to $\Cl(T, \psi_T)$ under $\FL_{[a, b]}$.  Similarly, the even Clifford algebra
$\Cl_+(M, \psi_M)$ is the $a$-strongly divisible lattice in $\Cl_+(D, \psi_D)$ corresponding to $\Cl_+(T, \psi_T)$
under $\FL_{[a, b]}$.
\end{prop}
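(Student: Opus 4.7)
The plan is to apply integral Fontaine--Laffaille directly to $\Cl(V, \psi_V)$---which by hypothesis has weights in $[a,b]$---and then identify the resulting strongly divisible lattice with $\Cl(M, \psi_M)$. First, $\otimes$-compatibility of rational FL (an equivalence of tensor categories after inverting $p$) produces the pairing $\psi_D: D \otimes D \to K$ and identifies $\Cl(D, \psi_D)$ with the filtered F-isocrystal attached to $\Cl(V, \psi_V)$. Since $\Cl(V, \psi_V)$ has weights in $[a,b]$, integral FL yields a unique $a$-strongly divisible lattice $N \subset \Cl(D, \psi_D)$ with $\FL_{[a,b]}(N) = \Cl(T, \psi_T)$. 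The goal is to prove $N = \Cl(M, \psi_M)$, which will simultaneously establish that $\psi_M$ is $W$-valued.

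Next I would use functoriality of integral FL applied to the $G$-equivariant inclusions $T \hookrightarrow \Cl(T)$ and $\ZZ_p \cdot 1 \hookrightarrow \Cl(T)$ (all between lattices in representations with weights in $[a,b]$) to obtain corresponding inclusions $M \hookrightarrow N$ and $W \cdot 1 \hookrightarrow N$, compatible with the ambient inclusions $D \hookrightarrow \Cl(D)$ and $K \cdot 1 \hookrightarrow \Cl(D)$. In particular, $N \cap (K \cdot 1)$ is a $W$-saturated submodule of the line $K \cdot 1$ containing $W \cdot 1$, hence equals $W \cdot 1$.

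The hard step is to show that the Clifford multiplication $\mu: \Cl(D) \otimes_K \Cl(D) \to \Cl(D)$ preserves $N$. Rationally this is immediate, since $\mu$ corresponds under rational FL to the multiplication on $\Cl(V)$, which preserves $\Cl(T)$. Integrally the difficulty is that $\Cl(V) \otimes \Cl(V)$ generally has weights outside $[a,b]$, so one cannot directly apply an integral $\otimes$-compatibility of FL. This is precisely the gap that the refined framework of \cite{bhatt-snowden} is designed to close: it yields a compatibility for $G$-equivariant bilinear maps $L_1 \otimes L_2 \to L_3$ of $G$-stable lattices in crystalline representations with weights in $[a,b]$, without requiring $L_1 \otimes L_2$ to be in range. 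Applied to the ring structure $\Cl(T) \otimes \Cl(T) \to \Cl(T)$, this gives the required preservation of $N$, and is the main obstacle in the proof.

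Once $N$ is a subring, for any $v \in M \subset N$ one has $v^2 = \psi_D(v,v) \cdot 1 \in N \cap (K \cdot 1) = W \cdot 1$, so $\psi_D(v,v) \in W$; polarizing and using $p > 2$ then shows $\psi_M$ is $W$-valued. With $\psi_M$ defined, one forms $\Cl(M, \psi_M)$ as the quotient of the tensor algebra of $M$ by the Clifford relation, equipped with the induced filtration and Frobenius (as in Example \ref{clifford-dR}). The natural map $\Cl(M, \psi_M) \to \Cl(D, \psi_D)$ lands in $N$ (by the subring property) and generates $N$ as a $W$-module (since $M$ generates $\Cl(T)$ together with scalars after inverting $p$, and both sides are free $W$-modules of the same rank $2^{\operatorname{rk} M}$). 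Checking that the strongly divisible structures match reduces, via $\FL_{[a,b]}$ applied to both sides, to the tautology that $\Cl(T, \psi_T) = \Cl(T, \psi_T)$. The even Clifford statement follows by restricting to the $\ZZ/2$-graded even component, which is cut out of $\Cl(M)$ by a Frobenius- and filtration-compatible idempotent.
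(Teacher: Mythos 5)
Your proposal correctly isolates the central difficulty --- that $\Cl(V,\psi_V)\otimes\Cl(V,\psi_V)$ has Hodge--Tate weights outside $[a,b]$, so the integral tensor compatibility of $\FL_{[a,b]}$ cannot be invoked directly --- but it does not actually resolve it. The step ``Clifford multiplication preserves $N$'' is exactly the substance of the proposition, and you dispose of it by appealing to a general compatibility of Fontaine--Laffaille theory with $G$-equivariant bilinear maps $L_1\otimes L_2\to L_3$ of in-range lattices whose tensor product is out of range, attributed to the forthcoming \cite{bhatt-snowden}. No such statement is proved (or even precisely formulated) in your argument, and it is not what the paper uses: the appendix cites \cite{bhatt-snowden} only for the formula for the inverse functor and for the final bookkeeping that various identifications agree. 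The actual mechanism in the paper is Breuil--Kisin theory: one lifts $T$ to a Kisin module $\mf{M}$, shows that $\uT$ commutes with the Clifford construction with \emph{no} weight restriction (Lemma~\ref{app3}, using that Kisin modules form a tensor category, that $\uT$ is exact, fully faithful and preserves surjections, and that the Clifford ideal is generated in degree $2$, where $p\neq 2$ gives $\Sym^2$-compatibility), and only then uses the in-range hypothesis, via Lemma~\ref{app2} and Liu's results \cite{LiuBreuil}, to identify $\varphi^*(\mf{M}/u\mf{M})$ and $\varphi^*(\mf{C}/u\mf{C})$ with the Fontaine--Laffaille lattices of $T$ and $\Cl(T,\psi_T)$. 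Without some such device, your key step is an unproved assertion, not a proof.

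There is also a secondary gap in your concluding identification. Granting the subring property, you argue that the image of $\Cl(M,\psi_M)$ in $N$ ``generates $N$ as a $W$-module'' because it does so after inverting $p$ and both are free of rank $2^{\rk M}$; but a finite-index $W$-submodule of the same rank need not be all of $N$, so equality does not follow. Likewise, applying $\FL_{[a,b]}$ to $\Cl(M,\psi_M)$ presupposes that it is an $a$-strongly divisible lattice and that its associated Galois lattice is $\Cl(T,\psi_T)$ --- which is precisely what is to be proven, so the claimed ``tautology'' is circular. In the paper these points are what the passage through $\mf{C}/u\mf{C}$ and the final (carefully verified) comparison of sublattices of $\Cl(D,\psi_D)$ are for.
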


\begin{remark}
In the application of this proposition, $V$ is of dimension 21 and both $V$ and $\Cl(V, \psi_V)$ have weights in
$[-1, 1]$.  It is not difficult to show, using only Fontaine--Laffaille theory,
that formation of the Clifford algebra is compatible with Fontaine--Laffaille theory for $p>42$.  The proposition
gives compatibility for $p \ge 5$.
\end{remark}

To prove the result, we will need to make use of results of Kisin.  Let $\mf{S}=W [[u]]$, equipped with the
Frobenius map $\varphi$ extending the natural one on $W$ and taking $u$ to $u^p$.  Let $E(u) \in \mf{S}$ be the
Eisenstein polynomial $u-p$.  A \emph{Kisin module} is a free $\mf{S}$-module $\mf{M}$ of finite rank equipped with a
$\mf{S}$-linear isomorphism $\varphi:\varphi^*(\mf{M})[1/E(u)] \to \mf{M}[1/E(u)]$ (see \cite[\S 4.1]{Kisin2}).  The
notion of a morphism of Kisin modules is evident.  The kernel of a surjection of Kisin modules is again a Kisin module;
the corresponding result for cokernels is clearly not true.  The category of Kisin modules is stable under tensor
products and duality. Let $K_{\infty}$ be the extension of $K$ obtained by adjoining a compatible system
of $p$-power roots of $p$, and let $G_{\infty} \subset G$ be its absolute Galois group.  Given a Kisin module $\mf{M}$,
Kisin constructed a finite free $\ZZ_p$-module $\uT(\mf{M})$ equipped with an action of $G_{\infty}$.  In
the ``effective'' case, where $\varphi(\mf{M}) \subset \mf{M}$, this is defined by
\begin{displaymath}
\uT(\mf{M})=\Hom_{\mf{S}, \varphi}(\mf{M}, \mf{S}^{\rm ur})^{\vee},
\end{displaymath}
see \cite[\S 2.1.4]{Kisin} (though note we have added a dual); in general, $\uT(\mf{M})$ is defined by twisting to
the effective case, using the above definition and then untwisting, much like $\FL_{[a, b]}$ is defined.  The functor
$\uT$ is fully faithful, compatible with duality, preserves surjections and takes exact sequences to
exact sequences.  These properties follow from \cite[\S 2.1.4]{Kisin}, \cite[\S 2.1.12]{Kisin} together with
Fontaine's theory of \'etale $\mc{O}_{\mc{E}}$-modules (see \cite[\S 2.2]{LiuFontaine} for a summary).  Kisin
showed (\cite[\S 2.1.5]{Kisin} and \cite[\S 2.1.15]{Kisin}) that every $G$-stable lattice in a semi-stable
representation is of the form $\uT(\mf{M})$ for some Kisin module $\mf{M}$.

\begin{lem}
\label{app2}
Let $V$ be a crystalline representation of $G$ with weights in $[a, b]$ with $b-a<p-1$ and let $D$ be the filtered
$\varphi$-module corresponding to $V$.  Let $T$ be a $G$-stable lattice in $V$ and let $\mf{M}$ be the Kisin module
associated to $T$.  Then the $\varphi^*(\mf{M}/u\mf{M})$ is naturally the $a$-strongly divisible lattice in $D$
corresponding to $T$ under $\FL_{[a, b]}$.  (Here $\varphi^*$ indicates that the $W$-module structure is twisted by
$\varphi$.)
\end{lem}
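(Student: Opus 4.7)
The proof naturally splits into three steps: (i) reduction to the effective case via Tate twists, (ii) equipping $M := \varphi^*(\mathfrak{M}/u\mathfrak{M})$ with a strongly divisible lattice structure whose associated filtered $\varphi$-module is $D$, and (iii) matching the Galois lattice $\FL_{[a,b]}(M)$ with $T = \uT(\mathfrak{M})$.

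For step (i), one checks that $\uT$, $\FL_{[a,b]}$, and the formation of $\varphi^*(\mathfrak{M}/u\mathfrak{M})$ all commute with Tate twisting; on Kisin modules the twist is implemented by multiplying $\varphi_{\mathfrak{M}}$ by appropriate powers of $E(u)$, and on the reduction this corresponds to the shift in the Fontaine--Laffaille filtration. This reduces to the effective case $a=0$, $b=h<p-1$, in which $\varphi_{\mathfrak{M}}(\varphi^*\mathfrak{M}) \subset \mathfrak{M}$ with cokernel annihilated by $E(u)^h$.

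For step (ii), set $M = \varphi^*\mathfrak{M}/u\varphi^*\mathfrak{M}$, define $\Fil^i M$ as the image of $\{x \in \varphi^*\mathfrak{M} : \varphi_{\mathfrak{M}}(x) \in E(u)^i \mathfrak{M}\}$, and take the divided Frobenius $\varphi_i : \Fil^i M \to M$ to be induced by $(-p)^{-i}\varphi_{\mathfrak{M}}$ modulo $u$, using $E(u) \equiv -p \pmod{u}$. Strong divisibility, i.e.\ $\sum p^{-i}\varphi_i(\Fil^i M) = M$, follows from the fact that $E(u)^h \mathfrak{M}$ lies in the image of $\varphi_{\mathfrak{M}}$; that the filtered pieces are $W$-direct summands follows by choosing an adapted basis of $\mathfrak{M}$ (available in this weight range) and reducing mod $u$. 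That $M \otimes_W K$ is naturally $D$ as a filtered $\varphi$-module is then Kisin's theorem identifying Kisin modules with admissible filtered $\varphi$-modules in the crystalline case.

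Step (iii) is the main obstacle, since $\uT(\mathfrak{M})$ and $\FL_{[0,h]}(M)$ are defined via Hom recipes into different period rings ($\mathfrak{S}^{\mathrm{ur}}$ and $A_{\cris}$) and must be compared in a way that respects the Galois actions. The cleanest route is through Breuil modules: associate to $\mathfrak{M}$ the Breuil module $\mathcal{M} := S \otimes_{\varphi,\mathfrak{S}} \mathfrak{M}$, where $S$ is the $p$-adically completed divided power envelope of $\mathfrak{S}$ along $E(u)$. By Kisin's equivalence, $\mathcal{M}$ is the Breuil module attached to $T$. The surjection $S \to W$ sending $u \mapsto 0$ identifies $\mathcal{M} \otimes_S W$ with $\varphi^*(\mathfrak{M}/u\mathfrak{M}) = M$, and a direct check shows this base change intertwines the filtrations and divided Frobenii constructed above. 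In the range $h<p-1$ it is classical that this reduction realizes the Galois-compatible equivalence between Breuil modules and Fontaine--Laffaille modules, giving $\FL_{[0,h]}(M) = T$ naturally and completing the proof.
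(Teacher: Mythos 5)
Your route is in essence the paper's: twist to the effective case, pass through Breuil's ring $S$ via $\mc{M}=\mf{M}\otimes_{\varphi,\mf{S}}S$, and recover $\varphi^*(\mf{M}/u\mf{M})$ by the reduction $S\to W$, $u\mapsto 0$. But the decisive point is exactly the one you dispose of with "it is classical that this reduction realizes the Galois-compatible equivalence between Breuil modules and Fontaine--Laffaille modules." The classical compatibility (Breuil, Breuil--Messing) runs in the opposite direction: for a Fontaine--Laffaille module $M$ one knows that $M\otimes_W S$ is a strongly divisible lattice and that $\uT(M\otimes_W S)\cong \FL_{[a,b]}(M)$, the key identity being $\wh{A}_{\st}^{N=0}=A_{\cris}$. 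To turn this into your claim --- that reducing mod $u$ the strongly divisible lattice attached to $T$ yields the Fontaine--Laffaille lattice of $T$ --- you must know that this strongly divisible lattice is of the form $M\otimes_W S$ with $M$ the FL lattice of $T$; that is precisely the existence-and-uniqueness theorem for strongly divisible lattices (Liu's theorem, the main result of [Liu2] in the bibliography), together with Liu's identification of that lattice with $\mf{M}\otimes_{\varphi,\mf{S}}S$ (which you attribute to "Kisin's equivalence"; for lattices this is due to Liu). The paper's proof supplies exactly these inputs: it starts from the FL lattice $M$ of $T$, computes $\uT(M\otimes_W S)=\Hom_{\varphi,\Fil}(M,\wh{A}_{\st}^{N=0})^{\vee}=\FL_{[a,b]}(M)=T$, concludes $M\otimes_W S=\mc{M}=\mf{M}\otimes_{\varphi,\mf{S}}S$ by uniqueness, and then applies $-\otimes_S W$. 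Without the uniqueness input, your step (iii) is an assertion rather than a proof.

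A secondary point: your step (ii), which builds a filtration and divided Frobenii on $\varphi^*(\mf{M}/u\mf{M})$ directly from the Kisin module and checks strong divisibility, the direct-summand property, and agreement with the Hodge filtration on $D$, becomes unnecessary once the argument above is run (the FL structure is inherited from the identification with the FL lattice of $T$); as sketched it is also the least routine part of your write-up, since the existence of an adapted basis over $\mf{S}$ and the comparison of your filtration with the one Kisin's theory puts on $D$ both require genuine arguments.
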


\begin{proof}
By twisting, it suffices to treat the case
$a=0$ and $b=p-2$.  Let $S$ be Breuil's ring \cite[\S 2.2]{LiuBreuil}.  Let $\mc{D}=D \otimes_K
S[1/p]$ be the rational Breuil module assocaited to $D$.  The main result of \cite{LiuBreuil} shows that there exists
a unique strongly divisible lattice $\mc{M}$ in $\mc{D}$ such that $\uT(\mc{M})=T$, where
\begin{displaymath}
\uT(\mc{M})=\Hom_{S, \varphi, N, \Fil}(\mc{M}, \wh{A}_{\st})^{\vee}.
\end{displaymath}
Furthermore, Liu shows that $\mc{M}$ is canonically identified with $\mf{M} \otimes_{\mf{S}, \varphi} S$.  Now,
one easily checks that $M \otimes_W S$ is a strongly divisible lattice in $\mc{D}$, and
\begin{displaymath}
\uT(M \otimes_W S)=\Hom_{S, \varphi, N, \Fil}(M \otimes_W S, \wh{A}_{\st})^{\vee}=\Hom_{\varphi, \Fil}(M,
\wh{A}_{\st}^{N=0})^{\vee}.
\end{displaymath}
As $\wh{A}_{\st}^{N=0}=A_{\cris}$, we obtain $\uT(M \otimes_W S)=\FL_{[a, b]}(M)=T$.  By uniqueness, $M \otimes_W S=
\mc{M}=\mf{M} \otimes_{\mf{S}, \varphi} S$.  Applying $-\otimes_S W$ (where the map $S \to W$ sends $u$ to 0),
we find $M=\varphi^*(\mf{M}/u\mf{M})$.
\end{proof}

\begin{lem}
\label{app3}
Let $\mf{M}$ be a Kisin module and let $\psi_{\mf{M}}:\mf{M} \otimes \mf{M} \to \mf{S}$ be a symmetric map of Kisin
modules.  Let $T=\uT(\mf{M})$ and let $\psi_T$ be the induced pairing on $T$.  Then $\Cl(\mf{M}, \psi_{\mf{M}})$
is a Kisin module, and there is a natural isomorphism $\uT(\Cl(\mf{M}, \psi_{\mf{M}})) \to \Cl(T, \psi_T)$.
\end{lem}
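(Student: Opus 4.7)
The plan is to build $\Cl(\mf{M}, \psi_{\mf{M}})$ as a Kisin module in a way that is manifestly compatible with $\uT$, then invoke the universal property of the Clifford algebra on the Galois side.

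First, I would construct the Kisin module structure. Form the tensor algebra $T(\mf{M}) = \bigoplus_{k \ge 0} \mf{M}^{\otimes k}$; each graded piece is a Kisin module, and the Frobenius structures assemble into isomorphisms $\varphi^{*}(\mf{M}^{\otimes k})[1/E(u)] \simrightarrow \mf{M}^{\otimes k}[1/E(u)]$. Because $\psi_{\mf{M}}$ is a morphism of Kisin modules (in particular $\varphi$-equivariant after inverting $E(u)$), the two-sided ideal $I \subset T(\mf{M})$ generated by $m \otimes m - \psi_{\mf{M}}(m,m) \cdot 1$ is Frobenius-stable, and the multiplication map on $T(\mf{M})$ descends to Kisin-module morphisms on the quotient $\Cl(\mf{M}, \psi_{\mf{M}})$. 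By the classical theory of Clifford algebras over commutative rings (applicable since $2$ is a unit in $\mf{S}$ as $p > 2$), the quotient is free of rank $2^{\rk\mf{M}}$, so the induced Frobenius is surjective at the level of the same finite rank after inverting $E(u)$, hence an isomorphism. The same argument applied to the $\ZZ/2$-graded decomposition shows $\Cl_+(\mf{M}, \psi_{\mf{M}})$ is a sub-Kisin module.

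Next I would transfer everything through $\uT$. The functor $\uT$ is a tensor functor (the essential image of $\uT$ is stable under tensor products and $\uT$ commutes with tensor products naturally, via its description using periods in $\mf{S}^{\rm ur}$, or equivalently because the underlying étale $\varphi$-module category is Tannakian). Applied to the Kisin-module morphisms built above, this turns $\uT(\Cl(\mf{M}, \psi_{\mf{M}}))$ into a $\ZZ_p$-algebra carrying a $G_\infty$-action, and the inclusion $\mf{M} \hookrightarrow \Cl(\mf{M}, \psi_{\mf{M}})$ induces a $G_\infty$-equivariant $\ZZ_p$-linear map $\iota_T : T \to \uT(\Cl(\mf{M}, \psi_{\mf{M}}))$. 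Compatibility of $\uT$ with the pairing (which is just functoriality applied to $\psi_{\mf{M}}$) shows that $\iota_T(x)^2 = \psi_T(x,x)$ for all $x \in T$.

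The universal property of the Clifford algebra then produces a $G_\infty$-equivariant $\ZZ_p$-algebra homomorphism $\Cl(T, \psi_T) \to \uT(\Cl(\mf{M}, \psi_{\mf{M}}))$. This map is surjective because $\uT(\Cl(\mf{M}, \psi_{\mf{M}}))$ is generated as an algebra by the image of $T$ (since $\Cl(\mf{M}, \psi_{\mf{M}})$ is generated by $\mf{M}$ and $\uT$ respects multiplication). It is then an isomorphism by a rank count: both sides are free $\ZZ_p$-modules of rank $2^{\rk \mf{M}}$. Restricting to the even part gives the corresponding statement for $\Cl_+$. The main technical obstacle is justifying that $\uT$ is a tensor functor with the required naturality, but this is a standard consequence of the construction of $\uT$ via the étale realization and the fact that the category of étale $\varphi$-modules over $\mf{S}[1/u]^{\wedge}$ is Tannakian, with $\uT$ the canonical fiber functor to $G_\infty$-representations.
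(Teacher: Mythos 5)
Your proposal is correct in substance, but the comparison step takes a genuinely different route from the paper's. For the first half (making $\Cl(\mf{M},\psi_{\mf{M}})$ a Kisin module) you and the paper do essentially the same thing: the paper filters by the images $\mf{C}_n$ of the truncated tensor algebras $\mc{T}_n(\mf{M})$ and uses $\mf{C}_n/\mf{C}_{n-1}\cong \bw{n}\mf{M}$ to get freeness, with Frobenius induced by functoriality; note that the paper works with the truncations precisely because the full tensor algebra is not itself a Kisin module, a point you should also build in. For the second half, the paper never invokes the universal property of $\Cl(T,\psi_T)$: it applies $\uT$ to the exact sequences $0\to\mf{I}_n\to\mc{T}_n(\mf{M})\to\mf{C}_n\to 0$, shows that the ideal $\varinjlim\uT(\mf{I}_n)\subset\mc{T}(T)$ is generated by $\uT(\mf{I}_2)$ (because $\varinjlim\mf{I}_n$ is generated by $\mf{I}_2$ and $\uT$ preserves surjections), and identifies $\uT(\mf{I}_2)$ with the degree-two Clifford relations via the isomorphism $\Sym^2(\mf{M})\cong\mf{I}_2$, $xy\mapsto x\otimes y+y\otimes x-2\psi_{\mf{M}}(x,y)$, using compatibility of $\uT$ with $\Sym^2$ (here $p\neq 2$ enters); both kernels are then two-sided ideals of $\mc{T}(T)$ generated by the same intersection with $\mc{T}_2(T)$, hence equal. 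You instead build a map in the opposite direction, $\Cl(T,\psi_T)\to\uT(\Cl(\mf{M},\psi_{\mf{M}}))$, from the universal property, and finish by surjectivity plus a rank count; this is slicker and avoids manipulating the ideal, at the cost of two points you should make explicit. First, the relation $\iota_T(x)^2=\psi_T(x,x)$ is not literally "functoriality applied to $\psi_{\mf{M}}$": the defining relation is quadratic, so you must apply $\uT$ to the linearized identity $m\cdot m'+m'\cdot m=2\psi_{\mf{M}}(m,m')$ (a morphism of Kisin modules $\mf{M}\otimes\mf{M}\to\Cl(\mf{M},\psi_{\mf{M}})$) and then divide by $2$, which is where $p>2$ is used — the exact analogue of the paper's $\Sym^2$ step. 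Second, "generated as an algebra by the image of $T$" requires that $\uT$ preserve surjections, applied to the truncated maps $\mc{T}_n(\mf{M})\twoheadrightarrow\mf{C}_n$; this is one of the standard properties of $\uT$ listed in the paper, so the gap is easily closed, but it is the load-bearing input and should be cited. With those two points spelled out, your argument is a valid alternative proof and uses exactly the same foundational properties of $\uT$ (exact, fully faithful tensor functor preserving surjections) as the paper.
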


\begin{proof}
For a $\ZZ$-module $M$, let $\mc{T}(M)$ denote the tensor algbera on $M$ and $\mc{T}_n(M)$ the truncated tensor algebra
$\bigoplus_{k \le n} M^{\otimes k}$, which we regard as a subgroup of $\mc{T}(M)$.  Let $\mf{C}_n$ (resp. $\mf{I}_n$)
denote the image (resp.\ kernel) of the map $\mc{T}_n(\mf{M}) \to \Cl(\mf{M}, \psi_{\mf{M}})$.
There is a natural isomorphism of $\mf{C}_n/\mf{C}_{n-1}$
with $\bw{n}{\mf{M}}$, which shows, inductively, that $\mf{C}_n$ is free as an $\mf{S}$-module.  It is clear that
the Frobenius on $\mf{M}$ induces one on $\mf{C}_n$, and so $\mf{C}_n$ is a Kisin module.  In particular,
$\Cl(\mf{M}, \psi_{\mf{M}})=\mf{C}_r$ is a Kisin module, where $r$ is the rank of $\mf{M}$

We have an exact sequence of Kisin modules
\begin{displaymath}
0 \to \mf{I}_n \to \mc{T}_n(\mf{M}) \to \mf{C}_n \to 0.
\end{displaymath}
Applying $\uT$ and taking the direct limit over $n$, we see that there is an exact sequence
\begin{displaymath}
0 \to \varinjlim \uT(\mf{I}_n) \to \mc{T}(T) \to \uT(\Cl(\mf{M}, \psi_{\mf{M}})) \to 0.
\end{displaymath}
It is easy to see that the right map is an algebra homomorphism, and so the group on the left is a 2-sided ideal of
$\mc{T}(T)$.  We claim that it is generated by $\uT(\mf{I}_2)$.  To see this, note that the ideal $\varinjlim \mf{I}_n$
of $\mc{T}(\mf{M})$ is generated by $\mf{I}_2$.  It follows that the map of Kisin modules
\begin{displaymath}
\bigoplus_{i+j=n-2} \left( \mc{T}_i(\mf{M}) \otimes \mf{I}_2 \otimes \mc{T}_j(\mf{M}) \right) \to \mf{I}_n
\end{displaymath}
is surjective.  Applying $\uT$, and using that $\uT$ is a tensor functor which
preserves surjections, establishes the claim.

Now, we have an isomorphism
\begin{displaymath}
\Sym^2(\mf{M}) \to \mf{I}_2, \qquad xy \mapsto x \otimes y + y \otimes x - 2 \psi_{\mf{M}}(x, y).
\end{displaymath}
Applying $\uT$, and using the compatibility of $\uT$ with $\Sym^2$ (which is obvious since $p \ne 2$), we see
that $\uT(\mf{I}_2)$ is exactly the kernel of the map $\mc{T}_2(T) \to \Cl(T, \psi_T)$.  We have thus shown that
$\ker(\mc{T}(T) \to \uT(\Cl(\mf{M}, \psi_{\mf{M}})))$ and $\ker(\mc{T}(T) \to \Cl(T, \psi_T))$ are two 2-sided
ideals of $\mc{T}(T)$ which are generated by their intersections with $\mc{T}_2(T)$, and that these intersections
coincide.  It follows that the ideals coincide, and so $\uT(\Cl(\mf{M}, \psi_{\mf{M}}))$ is naturally isomorphic to
$\Cl(T, \psi_T)$.
\end{proof}

We now prove the proposition:

\begin{proof}[Proof of Proposition~\ref{app1}]
Let $\mf{M}$ be the Kisin module such that $\uT(\mf{M})=M$.  Since $\uT$ is a fully faithful tensor functor, the
pairing $\psi_M$ comes from a symmetric pairing $\psi_{\mf{M}}$ on $\mf{M}$.  By Lemma~\ref{app3}, the Clifford
algbera $\mf{C}=\Cl(\mf{M}, \psi_{\mf{M}})$ is a Kisin module, and $\uT(\mf{C})$ is naturally
identified with $\Cl(T, \psi_T)$.  Since $\Cl(V, \psi_V)$ has weights in $[a, b]$, it follows from Lemma~\ref{app2}
that $\varphi^*(\mf{C}/u\mf{C})$ is naturally identified with the strongly divisible lattice corresponding to
$\Cl(T, \psi_T)$ under $\FL_{[a, b]}$.  Since formation of Clifford algebras is compatible with base change,
$\mf{C}/u\mf{C}$ is the Clifford algebra associated to $\mf{M}/u\mf{M}$ with respect to the pairing induced by
$\psi_{\mf{M}}$.  Applying Lemma~\ref{app2} again, we see that $\varphi^*(\mf{M}/u\mf{M})$ is naturally isomorphic to
$M$; under this identification, the pairing induced by $\psi_{\mf{M}}$ corresponds to $\psi_M$, which shows that
$\psi_M$ takes values in $W$.  We have thus shown that $\uT(\Cl(M, \psi_M))$ is identified with
$\Cl(T, \psi_T)$.  The statement for even Clifford algebras follows, as the even Clifford algebra is obtained
from the full Clifford algebra by taking $\ZZ/2\ZZ$-invariants.

This shows that $\Cl(M, \psi_M)$ is naturally identified with the
$a$-strongly divisible lattice in $\Cl(D, \psi_D)$ which corresponds to $\Cl(T, \psi_T)$ under $\FL_{[a, b]}$.
The last thing to check is that these two sublattices of $\Cl(D, \psi_D)$ are \emph{equal}.  
This is not difficult but requires one to verify that various identifications are compatible.
Since details of this type of verification will be found in \cite{bhatt-snowden}, we sketch the argument here.  
The basic idea is to work with Kisin modules inside a fixed rational Breuil module.  If $\mc{D}$ is an admissible Breuil module and
$T$ is a $G_{\infty}$-stable lattice in the corresponding Galois representation $V$, there is a unique Kisin module
$\mf{M}$ embedded in $\mc{D}$ such that $\uT(\mf{M})=M$ under the fixed identification of $\uT(\mf{M})[1/p]$ with $V$.
One can then verify, as in the above proof, that $\Cl(\mf{M}, \psi_{\mf{M}}) \subset \Cl(\mc{D}, \psi_{\mc{D}})$
corresponds to $\Cl(T, \psi_T)$, and this gives the desired equality.  We refer the reader to the upcoming paper \cite{bhatt-snowden} for a more detailed argument along these
lines.
\end{proof}

\hskip\baselineskip

\end{document}